 \newcommand{\C}{\mathbb{C}}
\newcommand{\Gm}{\mathbb{G}_\mathrm{m}}
\renewcommand{\P}{\mathbb{P}}
 \newcommand{\Q}{\mathbb{Q}}
 \newcommand{\R}{\mathbb{R}}
 \newcommand{\Z}{\mathbb{Z}}
\newcommand{\refe}{\mathrm{ref}}
\newcommand{\cM}{\mathcal{M}}
\newcommand{\cO}{\mathcal{O}}
\newcommand{\cP}{\mathcal{P}}
\newcommand{\cQ}{\mathcal{Q}}
\newcommand{\cR}{\mathcal{R}}
\newcommand{\cX}{\mathcal{X}}
\newcommand{\cY}{\mathcal{Y}}
\renewcommand{\a}{\alpha}
\renewcommand{\b}{\beta}
\newcommand{\e}{\varepsilon}
\newcommand{\f}{\varphi}
\newcommand{\unipar}{t}
\newcommand{\om}{\omega}
\newcommand{\p}{\psi}
\newcommand{\eg}{{\rm e.g.\ }} 
\newcommand{\ie}{{\rm i.e.\ }} 
\newcommand{\loccit}{\textit{loc.\,cit.\ }}
\newcommand{\an}{\mathrm{an}}
\newcommand{\redu}{\mathrm{red}}
\DeclareMathOperator{\FS}{FS}
\DeclareMathOperator{\Hom}{Hom}
\DeclareMathOperator{\MA}{MA}
\DeclareMathOperator{\NA}{NA}
\DeclareMathOperator{\Spec}{Spec}
\DeclareMathOperator{\trop}{trop}
\DeclareMathOperator{\id}{id}
\DeclareMathOperator{\sym}{sym}
\DeclareMathOperator{\Star}{Star}
\DeclareMathOperator{\Sing}{Sing}
\DeclareMathOperator{\Sk}{Sk}
\DeclareMathOperator{\Log}{Log}
\DeclareMathOperator{\Hnot}{H^0}
\renewcommand{\div}{\mathrm{div}}
\newcommand{\val}{\mathrm{val}}
\newcommand{\Aspace}{A}
\newcommand{\Bspace}{B}
\newcommand{\cro}[1]{[\![#1]\!]}
\newcommand{\lau}[1]{(\!(#1)\!)}
\newcommand{\simto}{\overset\sim\to}
\newcommand{\ch}{\tiny \vee}
\newcommand{\MANA}{\MA_{\NA}}
\newcommand{\MAR}{\MA_{\R}}
\newcommand{\ti}[1]{\widetilde{#1}}
\numberwithin{equation}{section}       
\newtheorem{prop} {Proposition} [section]
\newtheorem{thm}[prop] {Theorem} 
\newtheorem{defi}[prop] {Definition}
\newtheorem{lem}[prop] {Lemma}
\newtheorem{cor}[prop]{Corollary}
\newtheorem{prop-def}[prop]{Proposition-Definition}
\newtheorem*{thmA}{Theorem A} 
\newtheorem*{thmB}{Theorem B}
\newtheorem*{corC}{Corollary C}
\newtheorem{exam}[prop]{Example}
\newtheorem{rmk}[prop]{Remark}
\theoremstyle{remark}
\newcommand{\corr}[1]{{\color{black}{#1}}}
\title[Monge--Amp\`ere equations]{Tropical and non-Archimedean Monge--Amp\`ere equations for a class of Calabi--Yau hypersurfaces}
\date{\today}
\author{Jakob Hultgren \and Mattias Jonsson \and Enrica Mazzon \and Nicholas McCleerey}
\address{Dept of Mathematics and Mathematical Statistics\\
  Ume{\aa} University\\ 
  901 87 Ume{\aa}, Sweden \\and Dept of Mathematics\\University of Maryland\\College Park, MD 20742-4015\\ USA}
\email{jakob.hultgren@umu.se}
\address{Dept of Mathematics\\
  University of Michigan\\
  Ann Arbor, MI 48109-1043\\ USA}
\email{mattiasj@umich.edu}
\address{Fakult\"at für Mathematik\\
Universit\"at Regensburg\\
93040 Regensburg\\
Germany}
\email{e.mazzon15@alumni.imperial.ac.uk}
\address{Dept of Mathematics\\
Purdue University\\
West Lafayette, Indiana 47907-2067\\
USA}  
\email{nmccleer@purdue.edu}
\begin{document}

\begin{abstract}
    For a large class of maximally degenerate families of Calabi--Yau hypersurfaces of complex projective space, we study non-Archimedean and tropical Monge--Amp{\`e}re equations, taking place on the associated Berkovich space,  
    and the essential skeleton therein, respectively. For a symmetric measure on the skeleton, we prove that the tropical equation admits a unique solution, up to an additive constant. Moreover, the solution to the non-Archimedean equation can be derived from the tropical solution, and is the restriction of a continuous semipositive toric metric on projective space. Together with the work of Yang~Li, this implies the weak metric SYZ conjecture on the existence of special Lagrangian fibrations in our setting.
 \end{abstract}

\maketitle

\setcounter{tocdepth}{1}
\tableofcontents
%
%
%
%
\section*{Introduction}
Let $f(z)\in\C[z_0,\dots,z_{d+1}]$ be a generic homogeneous polynomial of degree $d+2$, where $d\ge1$. Then 
\begin{equation}\label{eq:Family}
  X:=\{z_0z_1\dots z_{d+1}+\unipar f(z)=0\}\subset\P^{d+1}\times\C^*\tag{$\star$}
\end{equation}
defines a maximally degenerate 1-parameter family of complex Calabi--Yau manifolds, polarized by $L:=\cO(d+2)|_X$. By Yau's theorem~\cite{Yau}, we can equip each $X_\unipar$ with a Ricci flat metric in the Chern class of $L|_{X_\unipar}$. 
The structure of $X_\unipar$ as $\unipar \to 0$ is described by two fundamental conjectures, namely the \emph{SYZ conjecture}~\cite{SYZ} and the~\emph{Kontsevich--Soibelman conjecture}~\cite{KS06}. These two conjectures have recently been related to a conjecture about solutions to the \emph{non-Archimedean Monge-Amp{\`e}re equation} \cite{LiSYZ}.
In this paper we address the latter conjecture and prove a weak version of the SYZ conjecture in the setting above.

To explain all this, first note that $X$ defines a smooth projective variety over the non-Archimedean field $K:=\C\lau{\unipar}$ of complex Laurent series. Its Berkovich analytification $X^\an$ has a canonical closed subset $\Sk(X)\subset X^\an$, the \emph{essential skeleton},~\cite{KS06,MN15}, which in this case can be identified with the boundary of a $(d+1)$-dimensional simplex. The skeleton has a canonical piecewise integral affine structure, and in particular a canonical Lebesgue measure.

The Kontsevich--Soibelman conjecture states that, as $\unipar\to0$, $X_\unipar$ converges (after rescaling) in the Gromov--Hausdorff sense to a metric space whose underlying topological space is $\Sk(X)$, and whose metric is determined by the solution to---roughly speaking---a real Monge--Amp\`ere equation on the skeleton, with right hand side given by the Lebesgue measure on $\Sk(X)$. Making sense of this equation is not obvious, but something that we address satisfactorily in Theorem~B below in our setting.

As an alternative, one can look at the non-Archimedean Monge--Amp\`ere equation. To any continuous semipositive metric $\|\cdot\|$ on $L^\an$ is associated a \emph{Chambert--Loir measure} $c_1(L,\|\cdot\|)^d$, a positive Radon measure on $X^\an$ of mass $(d+2)^{d+1}$~\cite{CL06,Gub07}. By the main results in~\cite{nama,YZ17}, any positive Radon measure $\nu$ on $\Sk(X)$ of this mass is the Chambert--Loir measure of a continuous semipositive metric, unique up to scaling.

When $\nu$ equals Lebesgue measure on the skeleton, it is expected that the solution to the non-Archimedean Monge--Amp\`ere equation can be used to define the metric in the Kontsevich--Soibelman conjecture, as explored by Yang Li in his groundbreaking work~\cite{LiSYZ} (see also~\cite{LiSurvey}).
Unfortunately, the proof in~\cite{nama} is variational in nature, and does not give any information beyond continuity. 

Our first main result gives a much more precise description of the solution in terms of convex functions or, put differently, toric metrics. 
\begin{thmA}
  If $\nu$ is a symmetric positive measure on $\Sk(X)$ of mass $(d+2)^{d+1}$, then any solution to $c_1(L,\|\cdot\|)^d=\nu$ is the restriction of a symmetric toric metric on $\cO_{\P^{d+1}}(d+2)^\an$.
\end{thmA}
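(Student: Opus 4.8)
The plan is to produce one solution explicitly from the tropical side, verify that it is the restriction of a symmetric toric metric, and then invoke uniqueness of solutions of the non-Archimedean Monge--Amp\`ere equation to conclude that every solution is of this form. The first step is to make sense of, and solve, the real (tropical) Monge--Amp\`ere equation on $\Sk(X)$ with right-hand side $\nu$. Identifying $\Sk(X)$ with the boundary $\partial\Delta$ of a $(d+1)$-dimensional simplex $\Delta$ and using its integral affine structure, the unknown is a convex function $\psi$ on $\Sk(X)$ whose real Monge--Amp\`ere measure equals $\nu$; I expect this to have a solution, unique up to an additive constant. Since $\nu$ is $S_{d+2}$-invariant and the real Monge--Amp\`ere operator is equivariant, uniqueness then forces $\psi$ itself to be $S_{d+2}$-invariant.

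Next I would globalise $\psi$: extend it (in an $S_{d+2}$-invariant way, the particular choice being immaterial below) to a convex function on all of $\Delta$, and read the result off as a continuous semipositive toric metric $\|\cdot\|_\psi$ on $\cO_{\P^{d+1}}(d+2)^{\an}$, via the identification of $\Delta$ with a rescaling of the moment polytope of $(\P^{d+1},\cO_{\P^{d+1}}(d+2))$; by construction $\|\cdot\|_\psi$ is a symmetric toric metric. The heart of the argument is then to show that the restricted metric solves the equation, i.e.\ $c_1(L,\|\cdot\|_\psi|_{L^{\an}})^d=\nu$. For this I would work with a suitable model of the degeneration $z_0\cdots z_{d+1}+\unipar f=0$ --- obtained by resolving so that $\Sk(X)$ is the dual complex of the special fibre --- approximate $\|\cdot\|_\psi$ by toric model metrics, compute the Chambert--Loir measures of their restrictions to $X$ by intersection numbers on the special fibre, and pass to the limit, obtaining a measure carried by $\Sk(X)$ and equal there to the real Monge--Amp\`ere measure of $\psi$, hence to $\nu$. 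What makes this work is that cutting $\P^{d+1}$ by $X$ degenerates, in the model, to cutting by the toric boundary $\{z_0\cdots z_{d+1}=0\}$ --- which is precisely the mechanism producing the simplex $\Delta$ and its boundary $\Sk(X)$.

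Finally, by \cite{nama,YZ17} any continuous semipositive metric $\|\cdot\|$ on $L^{\an}$ with $c_1(L,\|\cdot\|)^d=\nu$ agrees with $\|\cdot\|_\psi|_{L^{\an}}$ up to scaling, say $\|\cdot\|=c\,\|\cdot\|_\psi|_{L^{\an}}$ with $c>0$. Since $c\,\|\cdot\|_\psi$ is again a symmetric toric metric on $\cO_{\P^{d+1}}(d+2)^{\an}$ and $c\,\|\cdot\|_\psi|_{L^{\an}}=(c\,\|\cdot\|_\psi)|_{L^{\an}}$, this exhibits $\|\cdot\|$ as the restriction of a symmetric toric metric, proving the claim. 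I expect the main obstacle to be the identity $c_1(L,\|\cdot\|_\psi|_{L^{\an}})^d=\nu$ of the second paragraph: this is not formal, since one must control how Monge--Amp\`ere mass behaves under restriction from $\P^{d+1}$ to the hypersurface $X$ and explain why it concentrates on $\Sk(X)$; a secondary difficulty is the foundational task of giving the tropical equation a rigorous meaning and proving its solvability. Throughout, one must carry the $S_{d+2}$-symmetry on $\P^{d+1,\an}$ and on $\Sk(X)$, since $S_{d+2}$ does not act on $X^{\an}$ itself.
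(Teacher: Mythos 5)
Your architecture is the paper's: solve a tropical Monge--Amp\`ere equation on the skeleton, promote the solution to a symmetric toric metric via the Legendre/Fubini--Study correspondence, show its restriction to $X^\an$ has Chambert--Loir measure $\nu$, and finish with the uniqueness theorem of \cite{YZ17}. But the two steps you flag as "difficulties" are precisely where the content lies, and as written both are gaps. First, the tropical equation. It is not merely a foundational chore to "give it a rigorous meaning": the real Monge--Amp\`ere operator on $\Sk(X)\simeq\Bspace=\partial\Delta^\vee$ (note: the skeleton sits in $N_\R$ as the boundary of the \emph{polar} simplex, not $\partial\Delta\subset M_\R$ as you write) has no naive global definition, because $\nu$ may charge the lower-dimensional strata and the singular locus of the affine structure, and the variational definition of the operator genuinely \emph{fails} for non-symmetric potentials (Example~\ref{exam:non_diff}). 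The paper's fix is the c-transform formalism: one works in $\cQ_{\sym}$ from the outset and sets $\nu_\psi:=(\partial^c\psi^c)_*\mu$, which only for symmetric $\psi$ agrees with $\MAR$ in charts (Corollary~\ref{cor:MAincoords}). In particular your argument "equivariance plus uniqueness forces $\psi$ to be symmetric" is circular here --- the operator is not even defined off the symmetric locus, so symmetry must be imposed a priori, not deduced.

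Second, the comparison identity $c_1(L,\|\cdot\|_\psi)^d=\nu$. You propose approximating by toric model metrics and computing intersection numbers on a resolved model, then "passing to the limit, obtaining a measure carried by $\Sk(X)$ and equal there to the real Monge--Amp\`ere measure of $\psi$" --- but that last clause is the whole theorem, and nothing in your sketch delivers it. The paper's mechanism is local and different: $\trop\colon U_i\to\tau_i^\circ$ is an affinoid torus fibration, the metric restricted to $U_i$ is $(\psi-m_j)\circ\trop$ by~\eqref{equ:FStrop}, Vilsmeier's theorem (Lemma~\ref{lem:Vils}) gives $\MANA=d!\,\MAR$ there, and then a \emph{total-mass count} ($\MANA(\FS(\psi))$ and $d!\,\nu_\psi$ both have mass $d!\,|\Aspace|$) shows no mass escapes $\bigcup_iU_i$; general $\nu$ is handled by approximating with measures giving full mass to $\bigcup_i\tau_i^\circ$ and using continuity of both operators. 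Without this (or an equally quantitative substitute) you cannot rule out Chambert--Loir mass off the skeleton, and your claim that the choice of convex extension of $\psi$ "is immaterial" also needs exactly this mass count, since $\trop(X^\an\cap T^\an)$ is strictly larger than $\Bspace$ and different extensions do change the restricted metric there. The final uniqueness step is fine and matches the paper.
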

Let us be a bit more precise. In Theorem~A we assume that the polynomial $f(z)$ used to define $X$ is \emph{admissible} in the following sense: for any intersection $Z$ of coordinate hyperplanes $z_j=0$ in $\P^{d+1}$, $f$ does not vanish identically on $Z$ and $V(f|_Z)$ is smooth, see~\S\ref{sec:CY}. A general polynomial is admissible.

The symmetric group $S_{d+2}$ acts on projective space and its analytification by permuting the coordinates $z_i$. This action preserves $\Sk(X)$, but not necessarily $X^\an$. We say that a measure $\nu$ on $\Sk(X)$ is \emph{symmetric} if it is invariant under the action. For example, Lebesgue measure is symmetric.

A particular example of an admissible polynomial is the Fermat polynomial $f(z)=\sum_0^{d+1}z_i^{d+2}$. The resulting \emph{Fermat family} is the central object in \cite{LiFermat}. For this family, Theorem~A was obtained independently by Pille-Schneider~\cite{PS22} in the special case when $\nu$ is the Lebesgue measure, by using the results from~\cite{LiFermat}.

\smallskip
To prove Theorem~A we study the real Monge--Amp\`ere equation on the skeleton $\Sk(X)$, as alluded to above. In doing so we exploit the structure of $X\subset \P^{d+1}$, as in \cite{LiFermat}. Namely, we view $\P^{d+1}$ as a toric variety with character lattice $M$ and co-character lattice $N$. Let $\Delta\subset M_\R$ be the polytope for the anticanonical bundle $\cO(d+2)$ on $\P^{d+1}$. There is a bijection between continuous semipositive toric metrics on $\cO_{\P^{d+1}}(d+2)^\an$ and convex functions $\psi\colon N_\R\to\R$ whose Legendre transforms are continuous convex functions on $\Delta$.

Both $\Delta$ and its polar $\Delta^\vee\subset N_\R$ are $(d+1)$-dimensional simplices. It turns out that the boundary $\Bspace:=\partial\Delta^\vee$ can be identified with the essential skeleton of $X$; we therefore work on $\Bspace$ rather than $\Sk(X)$. Let $\cQ\subset C^0(\Bspace)$ be the set of restrictions $\psi|_\Bspace$, with $\psi$ as above, and $\cQ_{\sym}\subset\cQ$ the subset of $S_{d+2}$-invariant functions. 

Each $d$-dimensional face $\tau_i$ of $\Bspace$ comes with an integral affine structure, and the restriction of any $\psi\in\cQ$ to $\tau_i$ is a convex function. This allows us to define the real Monge--Amp\`ere measure $\MAR(\psi|_{\tau_i^\circ})$ on the interior $\tau_i^\circ$ of $\tau_i$. We show that this Monge--Amp\`ere operator extends naturally to all of $\Bspace$, at least for symmetric functions. Let $\cM_{\sym}$ denote the space of positive, symmetric measures on $\Bspace$ of mass $(d+2)^{d+1}/d!$. 
\begin{thmB}
  There exists a unique continuous map $\cQ_{\sym}\ni \psi\mapsto \nu_\psi\in\cM_{\sym}$ such that
  \begin{equation}
    \nu_\psi|_{\tau_i^\circ}=\MAR(\psi|_{\tau_i^\circ})\tag{$\dagger$}
  \end{equation}
  for all $\psi\in\cQ_{\sym}$ and all $i$. Moreover, this map induces a homeomorphism $\cQ_{\sym}/\R\to\cM_{\sym}$.
\end{thmB}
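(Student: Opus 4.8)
The plan is to build the extension operator $\psi\mapsto\nu_\psi$ by regularization plus a stratified formula, then read off the homeomorphism from a comparison principle together with a variational existence argument.

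\textbf{Step 1 (construction of the operator).} First I would isolate a dense subclass $\cQ_{\sym}^{\mathrm{reg}}\subset\cQ_{\sym}$ of \emph{regular} functions: those $\psi$ that are smooth with positive definite Hessian on each open face $\tau_i^\circ$ and whose behaviour transverse to each lower-dimensional face of $\Bspace$ is controlled (e.g.\ locally the restriction to $\Bspace$ of a smooth strictly convex function on a neighbourhood in $N_\R$). For such $\psi$ the local structure near a codimension-$k$ face $\rho$ of $\Bspace$ forces any measure extending $\sum_i\MAR(\psi|_{\tau_i^\circ})$ and compatible with that structure to equal, on $\rho^\circ$, an explicit combinatorial constant (an ``angle defect'' coming from the bending of $\Bspace$ along $\rho$, governed by $S_{d+2}$) times $\MAR(\psi|_{\rho^\circ})$; here symmetry is essential, since a transposition exchanging the two facets meeting along $\rho$ forces their transverse data to match, so the concentrated contribution depends only on $\psi|_\rho$. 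This defines a candidate $\nu_\psi$ on all of $\Bspace$ for $\psi\in\cQ_{\sym}^{\mathrm{reg}}$; a degree-theoretic computation (the stratified gradient map of $\psi$ covers $\Delta$ with the appropriate total multiplicity) shows $\nu_\psi\in\cM_{\sym}$, and $(\dagger)$ holds by construction. For general $\psi\in\cQ_{\sym}$, regularize in an $S_{d+2}$-equivariant way (mollification combined with small symmetric strictly convex perturbations) to get $\psi_n\in\cQ_{\sym}^{\mathrm{reg}}$ with $\psi_n\to\psi$ uniformly, and set $\nu_\psi\=\lim_n\nu_{\psi_n}$ in the weak topology.

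\textbf{Step 2 (comparison principle; well-definedness, uniqueness, injectivity).} The analytic heart of the argument is a comparison/domination inequality for the extended operator on $\Bspace$: for $\psi,\psi'\in\cQ_{\sym}$ one has $\int_{\{\psi<\psi'\}}\nu_{\psi'}\le\int_{\{\psi<\psi'\}}\nu_\psi$. I would prove this first for $\psi,\psi'\in\cQ_{\sym}^{\mathrm{reg}}$ by a stratum-by-stratum integration by parts on $\Bspace$ --- on each open face it is the classical real Monge--Amp\`ere comparison inequality, and the boundary terms produced along the lower-dimensional strata are non-negative thanks to the angle-defect structure of Step 1 --- and then for arbitrary $\psi,\psi'$ by the approximation in Step 1. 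Three consequences follow. (i) Using the inequality for pairs of regular approximants (sandwiching via $\max$, or a quantitative stability estimate it yields) the limit $\lim_n\nu_{\psi_n}$ is independent of the approximating sequence, so $\nu_\psi$ is well defined and $(\dagger)$ persists. (ii) Any continuous map satisfying $(\dagger)$ must agree with ours on $\cQ_{\sym}^{\mathrm{reg}}$ (forced by $(\dagger)$ together with the total-mass normalization of $\cM_{\sym}$), hence on all of $\cQ_{\sym}$ by density and continuity: this gives uniqueness of the extension. (iii) The comparison inequality implies strict convexity modulo constants of the Monge--Amp\`ere energy $E$ (the primitive of $\psi\mapsto\nu_\psi$), so $\nu_\psi=\nu_{\psi'}$ forces $\psi-\psi'$ constant: injectivity of $\cQ_{\sym}/\R\to\cM_{\sym}$.

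\textbf{Step 3 (continuity, surjectivity, conclusion).} Continuity of $\psi\mapsto\nu_\psi$: if $\psi_n\to\psi$ uniformly, then $\cQ$ consists of uniformly Lipschitz functions on the compact set $\Bspace$ (subdifferentials lie in $\Delta$), so $\cQ_{\sym}/\R$ and $\cM_{\sym}$ are compact; any weak limit point of $(\nu_{\psi_n})$ lies in $\cM_{\sym}$ and, by the domination inequalities, equals $\nu_\psi$, whence $\nu_{\psi_n}\to\nu_\psi$. For surjectivity I would use the variational method: given $\nu\in\cM_{\sym}$, minimize $F_\nu(\psi)\=E(\psi)-\int_\Bspace\psi\,\nu$ over $\{\psi\in\cQ_{\sym}:\sup_\Bspace\psi=0\}$; this set is compact, $F_\nu$ is lower semicontinuous and, via a Jensen/Moser--Trudinger-type bound, bounded below and coercive, so a minimizer $\psi$ exists, and its Euler--Lagrange equation $\int_\Bspace v\,\nu_\psi=\int_\Bspace v\,\nu$ for all admissible $v$ gives $\nu_\psi=\nu$ (using $\tfrac{d}{dt}E(\psi+tv)|_{t=0}=\int_\Bspace v\,\nu_\psi$). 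Finally, $\cQ_{\sym}/\R\to\cM_{\sym}$ is a continuous bijection from a compact space to a Hausdorff space, hence a homeomorphism.

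\textbf{Main obstacle.} The crux is the comparison/domination principle on the singular integral affine manifold $\Bspace$ (Step 2): one must precisely identify and sign the Monge--Amp\`ere mass that concentrates along the lower-dimensional strata when $\Bspace$ bends, and show it varies continuously in $\psi$. This is exactly where $S_{d+2}$-symmetry is indispensable --- without it the transverse data along a ridge need not be governed by a single convex function on that ridge, and neither the explicit stratified extension of Step 1 nor the boundary-term positivity is available. A secondary, milder difficulty is the coercivity estimate underlying the variational existence in Step 3.
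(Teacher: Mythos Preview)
Your proposal diverges substantially from the paper's proof, and Step~1 contains a genuine gap.

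\textbf{The paper's approach.} The paper never constructs $\nu_\psi$ by a stratified formula or regularization. Instead, it defines $\nu_\psi$ \emph{globally and directly} as the pushforward $(\partial^c\psi^c)_*\mu$ of Lebesgue measure $\mu$ on the dual space $\Aspace=\partial\Delta$ under the $c$-gradient map (Definition~\ref{defi:MA}, Proposition~\ref{prop:cMA}). The crucial input is that, for symmetric $\psi$, the $c$-subgradient is single-valued a.e.\ and agrees with the ordinary subgradient in the charts $q_{i,j}$ (Lemmas~\ref{lem:pooh}, \ref{lem:stuff}); this immediately gives $(\dagger)$ via Corollary~\ref{cor:MAincoords}. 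Continuity of $\psi\mapsto\nu_\psi$ is then a one-line consequence of convergence of gradients of convex functions (Proposition~\ref{prop:MAcont}). Existence is variational, as you suggest, but the functional is $F_\nu(\psi)=\int_\Aspace\psi^c\,d\mu+\int_\Bspace\psi\,d\nu$, and coercivity is automatic from compactness of $\cQ_{\sym}/\R$ (Arzel\`a--Ascoli). Uniqueness of solutions is \emph{not} via a comparison principle: if $\psi_0,\psi_1$ are minimizers then so is $\tfrac12(\psi_0+\psi_1)$, which forces $\partial^c\psi_0^c=\partial^c\psi_1^c$ a.e.\ by a pointwise inequality, and hence $\psi_0-\psi_1$ is constant. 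Uniqueness of the \emph{map} satisfying $(\dagger)$ follows because the set of $\psi$ with $\nu_\psi$ supported on $\bigcup_i\tau_i^\circ$ is dense (by surjectivity), and on such $\psi$ the condition $(\dagger)$ plus the total mass determine $\nu_\psi$ completely.

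\textbf{The gap in your Step~1.} Your claim that the mass concentrated on a lower-dimensional face $\rho$ equals ``an explicit combinatorial constant times $\MAR(\psi|_{\rho^\circ})$'' is unsupported and likely false. The paper's Example~\ref{exam:singmass} exhibits (for $d=2$) a symmetric $\psi$ whose Monge--Amp\`ere measure is supported entirely on the singular set $\Bspace\setminus\Bspace_0$, and remarks explicitly that the chart Monge--Amp\`ere of $\psi_{j,k}$ assigns mass $80/9$ to each singular point while the correct $\nu_\psi$ assigns $16/3$: so the mass on singular strata is \emph{not} computable from the local Monge--Amp\`ere in any single chart, and there is no evident ``angle defect'' formula in terms of $\psi|_\rho$ alone. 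Without a concrete formula, neither the regularization limit nor the boundary-term positivity you invoke in Step~2 can be checked. The paper's pushforward definition bypasses this entirely: the mass on the skeleton is simply whatever $\mu$-mass of $\Aspace$ the $c$-gradient fails to send into $\bigcup_i\tau_i^\circ$, with no need to describe it intrinsically on $\Bspace$.

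\textbf{Secondary remarks.} Your variational Step~3 is in the right spirit, but the differentiability $\tfrac{d}{dt}\big|_{t=0}E(\psi+tv)=\int_\Bspace v\,d\nu_\psi$ that you take for granted is precisely the paper's Corollary~\ref{cor:endiff}, whose proof relies on the $c$-transform/Legendre duality in charts --- machinery you never invoke. Your comparison-principle route to injectivity, while plausible in principle, is both harder and unnecessary here.
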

The space $\Bspace$ is an integral tropical manifold in the sense of~\cite{Gro13}, and Theorem~B can be seen as solving a \emph{tropical Monge--Amp\`ere equation}; slightly more precisely we can define a natural integral affine structure on a subset $\Bspace_0\subset \Bspace$, with $\Bspace\setminus \Bspace_0$ of codimension 2. Any $\psi\in\cQ_{\sym}$ can then be viewed as a convex metric on a certain affine $\R$-bundle over $\Bspace_0$, in the sense of~\cite{HO19}, 
with real Monge--Amp\`ere measure $\nu_{\psi}|_{\Bspace_0}$; see~\S\ref{sec:AffRb} and~\S\ref{rmk:MAMetric} for details.
While the real Monge--Amp\`ere measure of this convex metric is only defined on $B_0$, Theorem~B gives a way of extending this operator over the singular set $B\setminus B_0$. 

After the first draft of this paper appeared, it was pointed out to us by Rolf Andreasson that the main result of~\cite{Caf} directly gives 
a regularity result for solutions $\psi$ to $\nu_\psi = \mu$ which implies they define smooth Hessian metrics
over $B_0$ when $\mu$ is the Lebesgue measure on $A$. See \cite[Theorem~3, Lemma~16 and Lemma~17]{AH} for details and an extension to other symmetric polytopes.

\smallskip
Combining Theorem~B and its proof with the work of Li~\cite{LiSYZ} we obtain a weak version of the SYZ conjecture in our setting. The SYZ conjecture predicts that $X_\unipar$ admits a special Lagrangian fibration for small $\unipar$.\footnote{Ruddat and Siebert proved that~$X_0$ itself admits a special Lagrangian fibration, see~\cite{RS20}.} 
\begin{corC}
    Given $\delta>0$, for all sufficiently small $\unipar$ there exist a special Lagrangian torus fibration on an open subset of $X_\unipar$ of normalized Calabi--Yau volume at least $1-\delta$;
\end{corC}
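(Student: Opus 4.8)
The plan is to deduce Corollary~C from Theorem~B together with the comparison results of Yang~Li in~\cite{LiSYZ}. The starting point is Theorem~B applied to the Lebesgue measure $\nu = \leb$ on $\Bspace$, which (after suitably normalizing the total mass) yields a unique solution $\psi \in \cQ_{\sym}$ modulo constants to the tropical Monge--Amp\`ere equation $\nu_\psi = \leb$. By the discussion preceding Theorem~A (the construction implying Theorem~A, and the identification of $\Bspace$ with $\Sk(X)$), this tropical solution produces a continuous semipositive toric metric on $\cO_{\P^{d+1}}(d+2)^\an$ whose restriction to $L^\an$ solves the non-Archimedean Monge--Amp\`ere equation $c_1(L,\|\cdot\|)^d = \nu_{\mathrm{NA}}$, where $\nu_{\mathrm{NA}}$ is the Lebesgue-type measure on $\Sk(X)$ appearing in the Kontsevich--Soibelman picture.

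Next I would invoke Li's theorem from~\cite{LiSYZ}: he shows that, on the region of $X_\unipar$ over which the solution $\psi$ of the real/non-Archimedean Monge--Amp\`ere equation is sufficiently regular (smooth and strictly convex), one can glue together local special Lagrangian fibrations obtained from the gradient map of $\psi$, producing an honest special Lagrangian torus fibration on an open subset $U_\unipar \subset X_\unipar$ for all small $\unipar$. The key quantitative input needed is a lower bound, independent of $\unipar$, on the normalized Calabi--Yau volume of $U_\unipar$; this is where one uses that the complement $\Bspace \setminus \Bspace_0$ has codimension $2$ together with the known local regularity results~\cite{Figalli,Moo15,Moo21,MR22} for the real Monge--Amp\`ere equation away from a small closed set. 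Concretely, given $\delta > 0$ one chooses a compact set $K \subset \Bspace_0$ on which $\psi$ is smooth and strictly convex and whose complement has Lebesgue measure less than $\delta' = \delta'(\delta)$; Li's construction over (a neighborhood of) $K$ gives a special Lagrangian fibration over an open piece of $X_\unipar$, and a volume comparison --- using that the Calabi--Yau measures on $X_\unipar$ converge, after rescaling, to the pushforward of $\leb$ under the skeleton retraction --- shows this piece carries normalized volume at least $1 - \delta$.

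The main obstacle, and the step requiring the most care, is the regularity of the tropical solution $\psi$ on $\Bspace_0$: Theorem~B only gives continuity of $\psi$ and of the Monge--Amp\`ere operator, whereas Li's gluing requires $\psi$ to be smooth and strictly convex on a large-measure subset. One does not get this for free from the global theory; instead I would argue that the interior regularity theory for the real Monge--Amp\`ere equation (Caffarelli, together with the refinements in~\cite{Figalli,Moo15,Moo21,MR22} adapted to the affine-bundle setting of~\S\ref{sec:AffRb}) applies on each open face $\tau_i^\circ$ away from the lower-dimensional locus where faces meet, because there the equation $\MAR(\psi|_{\tau_i^\circ}) = \leb$ has smooth strictly positive right-hand side. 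Since $\leb$ gives full mass to $\Bspace_0 \cap \bigcup_i \tau_i^\circ$ and the bad locus has codimension $2$, one can exhaust by compact sets $K$ on which $\psi$ is as regular as needed, losing only $\delta$ in volume. The remaining steps --- the volume comparison between $X_\unipar$ and $\Sk(X)$, and the verification that Li's special Lagrangian construction goes through verbatim once the admissibility hypothesis on $f$ is in force --- are then largely a matter of citing~\cite{LiSYZ} and keeping track of the normalizations, and I would relegate them to a concluding section rather than spell out the analytic estimates in detail.
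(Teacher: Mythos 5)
Your overall strategy coincides with the paper's: solve the tropical equation $\nu_\psi=\leb$ via Theorem~B, pass to the induced toric psh metric on $\cO(d+2)^\an$ solving the non-Archimedean equation, use partial regularity for the real Monge--Amp\`ere equation to get smoothness of $\psi$ off a closed null set in each face $\tau_i^\circ$, and hand the rest to~\cite{LiSYZ}. Two points in your account are imprecise, and they are exactly where the argument has content beyond citation. First, Li's construction is not a gluing of local special Lagrangian fibrations built directly from the gradient map of $\psi$. What~\cite{LiSYZ} requires is the \emph{comparison property}: that the solution of the non-Archimedean equation, restricted to $U_i=\trop^{-1}(\tau_i^\circ)$, is the pullback of $\psi$ under the affinoid torus fibration $\trop$, equivalently is invariant under the retraction $r_i$ (Corollary~\ref{cor:retrinv}). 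This is what allows one to compare the K\"ahler potentials of the genuine Calabi--Yau metrics on $X_\unipar$ with $\psi\circ\Log_\cX$, first in $C^0$ and then in $C^\infty$ on the regular locus of $\psi$ via Savin's small perturbation theorem; only after this convergence is the fibration produced. Your proposal uses this bridge implicitly, but it is the main new input that Theorem~A and its proof supply, and identifying it (rather than the regularity of $\psi$, which is quoted from the literature in both your proposal and the paper) is the point of the reduction.

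Second, the volume bound $1-\delta$ comes not from the regularity of $\psi$ but from concentration of the normalized Calabi--Yau measure in the neighborhoods $W_i$ of the zero-dimensional strata $\xi_i$ of $\cX_0$ (Lemma~\ref{lem:muchmass}). Since the model $\cX$ is dlt but not snc, one cannot quote~\cite{konsoib} directly; the paper passes to an snc modification $\cX'\to\cX$ that is an isomorphism over the regular locus and observes that the only $d$-dimensional faces of $\Sk(\cX')$ contained in $\Sk(X)$ lie over the $\xi_i$. Your ``volume comparison'' step gestures at the right statement, but the version you quote (convergence to the pushforward of Lebesgue measure under the skeleton retraction) is not quite what is needed and would in any case require this snc reduction. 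Neither issue is fatal --- the architecture is the paper's --- but as written the proposal would leave the two load-bearing verifications unaddressed.
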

This is stronger than the main result of~\cite{LiFermat}, as the analysis in~\loccit is restricted to the Fermat family, where $f(z)=\sum_{j=0}^{d+1}z_j^{d+2}$, and to subsequences $X_{\unipar_n}$, with $\unipar_n\to0$.

In~\cite{LiSYZ}, Li gave an argument
reducing Corollary~C (as well as the corresponding statement for more general families) to
a certain conjectural \emph{comparison property} of the solution to the non-Archimedean Monge--Amp\`ere equation. In fact, our proof of Corollary~C follows~\cite{LiSYZ}, using a weaker version of the comparison property that we derive from Theorem~A and its proof.

\smallskip

Li also proved a weak version of the Kontsevich--Soibelman conjecture for the Fermat family in~\cite{LiFermat}: any subsequential Gromov--Hausdorff limit of $X_\unipar$ as $\unipar\to0$ contains a dense subset locally isometric to the regular part of a Monge--Amp{\`e}re metric on $B_0$. 
The injectivity in Theorem~B implies that the dense set in these subsequential Gromov--Hausdorff limits is uniquely determined up to local isometry, as is also obtained in~\cite{PS22}.

The solution $\psi\in\cQ_{\sym}$ to the equation $\nu_\psi=\nu$, where $\nu$ is Lebesgue measure on $\Bspace$, can be used to state a precise version of the Kontsevich--Soibelman conjecture in this setting. Namely, if we knew that (the metric associated to) $\psi$ is smooth and strictly convex on $\Bspace_0$, then its Hessian would give $\Bspace_0$ the structure of a metric space, whose completion should be homeomorphic to $\Bspace$, and equal to the Gromov--Hausdorff limit of $X_\unipar$ as $\unipar\to0$. It seems plausible that having a well-posed global (tropical) Monge--Amp\`ere equation may allow us to improve the local regularity results~\cite{Figalli,Moo15,Moo21,MR22}, which themselves are not sufficient, at least in dimension $d\ge3$.

\smallskip
See also~\cite{CJL,Got22,GO22,GTZ13,GTZ16,GW00,MMRZ22,Oda20,OO21,RZ22} for related, but slightly different, approaches to the SYZ and Kontsevich--Soibelman conjectures. In particular, a version of the Kontsevich--Soibelman conjecture is known in dimension 2~\cite{GW00,OO21}.

\subsection*{Strategy}
We now describe the main ideas behind Theorem~B. While there are satisfactory results for the Monge--Amp\`ere equation on Hessian manifolds~\cite{ChengYau,Del89,HO19,GuedjTo}, extending these to general integral tropical manifolds  seems challenging. Instead, our approach heavily uses the large symmetry group of $\Bspace$; 
this allows us to adapt the variational approach in~\cite{BB13,BBGZ,nama} for solving real, complex, and non-Archimedean Monge--Amp\`ere equations, respectively.

More precisely, if $\Aspace:=\partial\Delta\subset M_\R$, then the canonical pairing of 
$M_\R$ and $N_\R$ induces a \emph{cost function} on $\Aspace\times \Bspace$, in the sense of optimal transport. From this, one defines the $c$-transform (generalizing the usual Legendre transform), which can be used to recover $\cQ$ as the class of $c$-convex functions, and to define a notion of $c$-subgradients.

While the $c$-transform and $c$-subgradient express some pathological behavior in general, for symmetric functions, they reduce to the usual Legendre transform and subgradient when viewed in coordinate charts for the integral affine structure. 
For any $\psi\in\mathcal{Q}_{\sym}$, we may then define $\nu_\psi$ 
as the pushforward of Lebesgue measure on $\Aspace$ under the $c$-subgradient map of $\psi^c$, the $c$-transform of $\psi$.

Solving $\nu_\psi=\nu$, for a given $\nu \in \mathcal{M}_{\sym}$, can now be reformulated as minimizing a certain functional $F=F_\nu$ on $\cQ_{\sym}$; as
in~\cite{BB13,BBGZ,nama}
the crucial fact that the minimizer is a solution amounts to a differentiability property for $F$, which we can prove in the symmetric case (and, surprisingly, fails in the non-symmetric case, see Example~\ref{exam:non_diff}).
\smallskip

We now outline how to deduce Theorem~A from Theorem~B. For this, we need to explain the relation between $\Sk(X)$ and $\Bspace$. 

The variety $X$ admits a natural model $\cX$ over the valuation ring $\C\cro{\unipar}$, given by the same equation as above in \eqref{eq:Family}. Its special fiber $\cX_0$ is the union of the coordinate  hyperplanes in $\P^{d+1}_\C$, and the associated dual complex can be identified with $\Sk(X)$.
There are $d+2$ closed points $\xi_i\in\cX_0$ where $d+1$ distinct hyperplanes meet, and the preimage of $\xi_i$ under the specialization map $X^\an\to\cX_0$ 
is an open subset $U_i\subset X^\an$, whose intersection with $\Sk(X)$ is the relative interior $\tilde\tau_i^\circ$ of a $d$-dimensional simplex $\tilde\tau_i$; in fact, we have $\Sk(X)=\bigcup_i\tilde\tau_i$. We have a natural retraction $U_i\to\tilde\tau_i^\circ$, and this retraction is an affinoid torus fibration.

Let $T\subset\P^{d+1}$ be the torus. There is a canonical tropicalization map $\trop\colon T^\an\to N_\R$. One can show that $\Sk(X)\subset T^\an$, and that the tropicalization map restricts to a homeomorphism of $\Sk(X)$ onto $\Bspace$, sending $\tilde\tau_i$ onto $\tau_i$ for each $i$.
 On $U_i\subset X^\an$, the tropicalization map is also invariant under the retraction to $\tilde\tau_i^\circ$, and the restriction $\trop\colon U_i\to\tau_i^\circ$ is an affinoid torus fibration. 

Now consider the case of a symmetric measure $\nu$ on $\Sk(X)\simeq\Bspace$ that is sufficiently smooth, say equivalent to Lebesgue measure; the general case in Theorem~A can be treated by approximation. Pick $\psi\in\cQ_{\sym}$ with $\nu_\psi=\nu$. We can extend $\psi$ to a convex function on $N_\R$ whose Legendre transform is a symmetric continuous convex function on $\Delta$. As already mentioned, this induces a symmetric continuous semipositive toric metric on $\cO(d+2)^\an$, over $\P^{d+1,\an}$, and by restriction a continuous semipositive metric $\|\cdot\|$ on $L^\an$.

By construction, the restriction of $\|\cdot\|$ to $U_i$ can be viewed as the pullback of the convex function $\psi$ on $\tau_i^\circ$. Combining~($\dagger$) with a theorem of Vilsmeier in~\cite{Vilsmeier}, it follows that the Chambert-Loir measure $c_1(L,\|\cdot\|)^d$ agrees with the measure $\nu$ on an open subset of $\Sk(X)\simeq B$, and hence everywhere, as this open set carries all the mass of $\nu$.

\smallskip

Corollary~C relies on Theorem~A and the ideas of~\cite{LiSYZ}. Namely, while the model $\cX$ above is not semistable snc, Theorem~A implies that we still have the comparison property for the non-Archimedean and real Monge--Amp\`ere operators in the sense of~\cite[Definition~3.11]{LiSYZ}. The arguments in \textit{loc.\ cit.}\ then go through essentially unchanged; see~\S\ref{sec:SYZ} for details.
\smallskip

The variational principle we developed in Theorem B has been applied in some more general contexts after the first draft of this paper appeared. 
In particular, in~\cite{LiFano} it has been used to prove the SYZ for families of hypersurfaces in some toric Fano manifolds; this partially extends our approach to the non-symmetric setting, imposing however a
condition on the vertices of $\Delta$ and $\Delta^\vee$, which seems unfortunately rather restrictive.
In \cite{AH}, using to a larger extent the connections to optimal transport, Andreasson and Hultgren provide a necessary and sufficient condition for the solvability of the tropical Monge--Amp{\`e}re equation on a reflexive polytope, which implies the SYZ conjecture for the corresponding family of Calabi--Yau hypersurfaces.

\subsection*{Structure} The paper is organized as follows: after a discussion of the toric setup and the structure of $\Bspace$ as a tropical manifold, we introduce in~\S\ref{sec:cconvex} the class of c-convex functions, and show their basic properties. In~\S\ref{sec:sym}, we define the Monge--Amp\`ere operator on the subclass of symmetric c-convex functions, and in~\S\ref{sec:tropMA} we solve the tropical Monge--Amp\`ere equation, proving Theorem~B. The relation between c-convex functions and toric metrics on the Berkovich analytification on $\cO(d+2)$ is explored in~\S\ref{sec:NAmetrics}, whereas the restriction of the tropicalization map to $X^\an$ is studied in~\S\ref{sec:CY}. After that, combining all the ingredients, we prove Theorem~A in~\S\ref{sec:NAMA} and Corollary~C in~\S\ref{sec:SYZ}.
%
%

\subsection*{Notation}
Given a variety $X$ over a non-Archimedean field $K$, we denote by $X^\an$ the Berkovich analytification of $X$, and by $X^\val\subset X^\an$ the subset of valuations on the function field of $X$ extending the valuation on $K$.
Given an abelian group $\Gamma$, we set $\Gamma_\R:=\Gamma\otimes_\Z\R$. If $g$ is a convex function on $\R^n$, then the subgradient $\partial g(x)$ at $x\in\R^n$ is the set of linear functions $\ell\in(\R^n)^*$ such that the function $g-\ell$ attains its minimum at $x$. The (real) Monge--Amp\`ere measure $\MAR(g)$ of  $g$ is taken in the sense of Alexandrov, \ie as the Lebesgue measure of the subgradient image, see \eg~\cite[\S2.1]{Figalli}.

\subsection*{Acknowledgement}
We thank Y.~Li and L.~Pille-Schneider for many clarifying remarks on their work, and T.~Darvas, V.~Guedj, H.~Ruddat, T.~D.~T\^{o}, Y.~Odaka, and V.~Tosatti for useful comments.
The first author was supported by the Knut and Alice Wallenberg Foundation grant 2018-0357, BSF grant 2020329, and NSF grant DMS-1906370. The second author was supported by NSF grants DMS-1900025 and DMS-2154380. The third author was supported by the collaborative research center SFB 1085 \emph{Higher Invariants - Interactions between Arithmetic Geometry and Global Analysis} funded by the Deutsche Forschungsgemeinschaft. 
%
%
%
%
\section{Toric setup}
Fix an integer $d\ge1$.
 Our toric terminology largely follows~\cite{FultonToric}.
%
%
\subsection{Lattices and tori}
Consider the lattice $M':=\Z^{d+2}$ with basis
\[
  e_0=(1,0,\dots,0),\dots,e_{d+1}=(0,\dots,0,1).
\]
Let $T':=\Spec K[M']\simeq\Gm^{d+2}$ be the corresponding (split) torus. Each $m\in M'$
defines a character on $T'$. If we denote by $z_i$ the character associated to the basis element $e_i$, then the character associated to a general element
$m=(y_0,\dots,y_{d+1})\in M'$ is given by 
\[
  z^m:=z_0^{y_0}\cdot\ldots\cdot z_{d+1}^{y_{d+1}}
\]

Define a sublattice $M\subset M'$ by $M=\{y\in\Z^{d+2}\mid \sum_0^{d+1}y_i=0\}$. For any $i\in\{0,\dots,d+1\}$ the set $\{e_j-e_i\}_{j\ne i}$ forms a basis for $M$. Let $T:=\Spec K[M]\simeq\Gm^{d+1}$ be the associated torus. The inclusion $M\subset M'$ induces a morphism $T'\to T$, allowing us to view $T$ as a quotient of $T'$. The characters $z_i$ on $T'$ can be viewed as homogeneous coordinates on $T$.

Set $N':=\Hom(M',\Z)$ and $N:=\Hom(M,\Z)$. Then $N'\simeq\Z^{d+2}$ and 
\[
  N\simeq\Z^{d+2}/\Z(1,\dots,1).
\]

%
%
\subsection{Tropicalization}\label{subsec:trop map}
We use `additive' conventions for valuations and semivaluations. Thus $T^\an$ is the set of semivaluations $v\colon K[M]\to\R\cup\{+\infty\}$ restricting to the given valuation on $K$, and equipped with the topology of pointwise convergence. We have a tropicalization map
\[
  \trop\colon T^\an\to N_\R=\Hom(M,\R)
\]
characterized by 
\[
  \langle m,\trop(v)\rangle=-v(z^m)
\]
for all $m\in M$. This map is continuous and surjective. It admits a natural continuous one-sided inverse, which to $n\in N_\R$ associates the valuation $v_n\in T^\val\subset T^\an$ defined by
\[
  v_n\left(\sum_{m\in M}a_mz^m\right)=\min_m\{-v(a_m)-\langle m,n\rangle\};
\]
this is the minimal element in the fiber $\trop^{-1}(n)$, with respect to the natural partial ordering on $T^\an$.
%
%
\subsection{Simplices and projective space}
Let $\Delta\subset M_\R$
be the convex hull of the elements
\[
  m_i:=(d+1)e_i-\sum_{j\ne i}e_j\in M,\quad i=0,\dots,d+1.
\]
Then $\Delta$ is a simplex, whose polar polytope\footnote{We use a different sign convention from~\cite{LiFermat}.}
\[
  \Delta^\vee:=\{n\in N_\R\mid\sup_{m\in\Delta}\langle m,n\rangle=\max_{0\le i\le d+1}\langle m_i,n\rangle\le 1\},
\]
is also a simplex, with vertices given by
\[
  n_0=(-1,0,\dots,0),\dots,n_{d+1}=(0,\dots,0,-1).
\]
The fan in $N_\R$ dual to $\Delta$ has rays generated by $n_i$, $0\le i\le d+1$, and defines a toric variety that we identify with $\P^{d+1}$. In fact, $\Delta$ is the moment polytope for the anticanonical bundle $\cO(d+2)$ on $\P^{d+1}$, and the unique effective torus invariant anticanonical divisor on $\P^{d+1}$ is given by $-K_{\P^{d+1}}=\sum_{i=0}^{d+1}D_i$, where $D_i$ is the prime divisor on $\P^{d+1}$ corresponding to $n_i$.

For later reference, we note that
\begin{equation}\label{equ:intno}
  \langle m_i,n_j\rangle
  =\begin{cases}
    -(d+1) &\text{if $i=j$}\\
    1 &\text{if $i\ne j$}
  \end{cases}
\end{equation}
We can view $z_0,\dots,z_{d+1}$ as homogeneous coordinates on $\P^{d+1}$. 
For any $m\in M$, $z^m$ is a rational function on $\P^{d+1}$. If $m\in\Delta\cap M$, then $z^m$ can be viewed as a global section of $\cO(d+2)=\cO(-K_{\P^{d+1}})$, in the sense that $\div(z^m)-K_{\P^{d+1}}\ge 0$. More generally, for any $r\ge1$, the set 
\[
  \{z^m\mid m\in r\Delta\cap M\}
\]
is a basis for $\Hnot(\P^{d+1},\cO(r(d+2)))$.

There is an alternative description in which a global section of $\cO(r(d+2))$ is given as a homogeneous polynomial in the $z_i$ of degree $r(d+2)$. Given $m\in r\Delta\cap M$, define a monomial
\[
  \chi^{r,m}:=z^m\prod_{i=0}^{d+1}z_i^r.
\]
Then $(\chi^{r,m})_{m\in r\Delta\cap M}$ is a basis of the space of homogeneous polynomials of degree $r(d+2)$ in the $z_i$, and hence a basis for $\Hnot(\P^{d+1},\cO(r(d+2)))$. Note that the sections $\chi^{r,rm_i}=z_i^{r(d+2)}$, $0\le i\le d+1$, have no common zeros.
%
%
%
%
\section{Tropical manifolds}\label{sec:trop}
Above we defined simplices $\Delta\subset M_\R$ and $\Delta^\vee\subset N_\R$. Their boundaries
\begin{equation*}
  \Aspace:=\partial\Delta
  \quad\text{and}\quad
  \Bspace:=\partial\Delta^\vee
\end{equation*}
will be key players in what follows. As we will see, they are integral tropical manifolds in the sense of~\cite{GS06}. The exposition below more or less follows~\cite{LiFermat}.

The spaces $\Aspace$ and $\Bspace$ are naturally equipped with piecewise integral affine structures, and hence a canonical volume form that we refer to as Lebesgue measure. The total mass of $\Aspace$ and $\Bspace$ is $|\Aspace|=(d+2)^{d+1}/d!$ and $|\Bspace|=(d+2)/d!$, respectively.
It will occasionally be convenient to parametrize $\Aspace$ and $\Bspace$ as follows:
\begin{align}
    \Aspace &=\{\sum_j\a_jm_j\mid \a_j\in\R, \min_j\a_j=0, \sum_j\a_j=1\}\label{equ:Apar}\\
    \Bspace &=\{\sum_j\b_jn_j\mid \b_j\in\R, \min_j\b_j=0, \sum_j\b_j=1\}.\label{equ:Bpar}
\end{align}

%
%
\subsection{Singular integral affine structure}\label{sec:SIAS}
Following~\cite{GS06,LiFermat}, we now upgrade the piecewise integral structures on $\Aspace$ and $\Bspace$ to singular integral affine structures. This means that we have open dense subsets $\Aspace_0\subset\Aspace$ and $\Bspace_0\subset\Bspace$, of real codimension 2, such that $\Aspace_0$ and $\Bspace_0$ each admit a sheaf of integral affine functions.

In general, there is a great deal of flexibility in the choice of $\Aspace_0$ and $\Bspace_0$, see e.g.~\cite{MP21}. We will, however, be interested in symmetric data on $\Aspace$ and $\Bspace$, i.e.\ data invariant under the action of the permutation group $G=S_{d+2}$ on $\Aspace$ and $\Bspace$. 
This gives a canonical choice of our singular set, namely, the barycentric complexes of the $(d-1)$-dimensional faces of $\Aspace$ and $\Bspace$. 

Let us now be more precise. First consider the $d$-dimensional faces of $\Aspace$ and $\Bspace$. These are of the form 
\[
  \sigma_i:=\{\max_jn_j=n_i=1\}\subset M_\R
  \quad\text{and}\quad
  \tau_i:=\{\max_jm_j=m_i=1\}\subset N_\R
\]
for $0\le i\le d+1$, and we write $\sigma_i^\circ$, $\tau_i^\circ$ for the relative interiors. The integral affine functions on $\sigma_i^\circ$ (resp.\ $\tau_i^\circ$) are the restrictions of the integral affine functions on $M_\R$ (resp.\ $N_\R$).

\begin{center}
\begin{figure}[H]
\begin{tikzpicture}[scale=0.6]
\coordinate (4) at (0,4);
\coordinate (3) at (3,2);
\coordinate (1) at (-3,0);
\coordinate (5) at (2,-2);
\coordinate (03) at (-0.5,-1);
\coordinate (13) at (0,1);
\coordinate (013) at (2/3,0);
\filldraw (3) circle (1pt);
\filldraw (4) circle (1pt);
\filldraw (5) circle (1pt);
\filldraw (1) circle (1pt);
\node[right] at (3) {$n_3$};
\node[above] at (4) {$n_2$};
\node[below] at (5) {$n_0$};
\node[left] at (1) {$n_1$};
\fill[yellow, fill opacity=0.15] (5) -- (3) -- (4);
\draw (3)--(4)--(1)--(5)--(3);
\draw[dashed] (1)--(3);
\draw (1)--(4)--(5);
\end{tikzpicture}
\caption{Subset $\tau_1$ for $d=2$}
\end{figure}
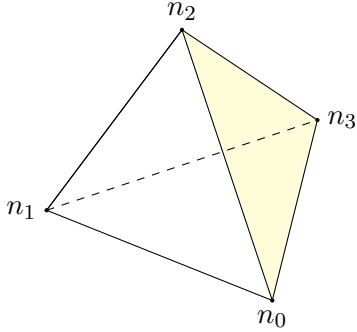
\end{center}
Second, we can define the integral affine structure near vertices of $\Aspace$ and $\Bspace$, respectively. Let $\Star(m_i)=\bigcup_{j\ne i}\sigma_j$ be the closed star of $m_i$, and $\Star^\circ(m_i)=\Aspace\setminus\sigma_i$ the open star. The stars $\Star(n_i)$ and $\Star^\circ(n_i)$ are defined analogously.

As follows from~\eqref{equ:intno}, given $i\ne j$, the integral linear map $M_\R\to\R^d$ given by
\begin{equation}\label{equ:pijinv}
  m\mapsto \left(\langle m, n_j - n_k\rangle\right)_{k\not= i, j}
\end{equation}
restricts to a piecewise integral affine isomorphism $\Star(m_i)\simto\tilde S$,
where $\tilde S\subset\R^d$ is the simplex with vertices given by
\[
  (d+2,0,\dots,0),\dots,(0,\dots,d+2),\ \text{and}\ (-(d+2),\dots,-(d+2)).
\]
It will be notationally convenient to denote the map in~\eqref{equ:pijinv} by $$p^{-1}_{i,j}\colon\Star(m_i)\simto\tilde S.$$ In this way, the inverse $p_{i,j}\colon \tilde S\to\Star(m_i)$ is an integral piecewise affine isomorphism, whose restriction to any simplex spanned by the origin and $d$ of the vertices of $\tilde S$ above is an integral affine isomorphism onto a simplex $\sigma_k$, $k\ne i$, when $\sigma_k$ is endowed with the integral affine structure above.
We view $p_{i,j}^{-1}$ as coordinates on $\Star(m_i)$.

By using Proposition~\ref{prop:chart_pair} below, one can easily check that for any $j, k, \ell\not= i$, the function $(n_k - n_\ell)\circ p_{i,j}: \ti{S}\rightarrow \R$ is the restriction of an integral linear function on $\R^d$. From this, it follows that $p_{i,k}^{-1}\circ p_{i,j}: \ti{S}\rightarrow \ti{S}$ is the restriction of an integral linear isomorphism of $\R^d$.

Similarly, we define coordinates on $\Star(n_i)$ by:
\begin{equation}\label{equ:qijinv}
  q_{i,j}^{-1}(n)
  = \left(\left\langle \frac{m_k - m_j}{d+2}, n\right\rangle\right)_{k\not= i, j}
  = \left(\left\langle e_k-e_j, n\right\rangle\right)_{k\not= i, j}\subset \R^{d}.
\end{equation}
Note the sign change, which makes the duality pairing in the charts compatible with the global pairing between $M_\R$ and $N_\R$, see Proposition~\ref{prop:chart_pair}. We get a piecewise integral affine isomorphism
\begin{equation*}
  q_{i,j}\colon\tilde T\simto\Star(n_i),
\end{equation*}
where $\tilde T\subset\R^d$ is the simplex spanned by
\[
  (-1,0,\dots,0),\dots,(0,\dots,-1),\ \text{and}\ (1,\dots,1).
\]
If $j,k\ne i$, then $q_{i,k}^{-1}\circ q_{i,j}:\tilde T\to\tilde T$ is the restriction of an integral linear isomorphism of $\R^d$, and 
for $j,k,l\ne i$, $(m_k-m_l)\circ q_{i,j}\colon\tilde T\to\R$ is the restriction of an integral linar function on $\R^d$.

As $p_{i,j}$ and $q_{i,j}$ are integral piecewise integral isomorphisms, they map Lebesgue measure on $\R^d$ to Lebesgue measure on $\Aspace$ and $\Bspace$, respectively. 

It is tempting to define integral affine structures on $\Star^\circ(m_i)$ and $\Star^\circ(n_i)$ by pulling back the sheaf on integral affine functions on $\tilde S^\circ$ and $\tilde T^\circ$, respectively. 
 However, these sheaves don't agree on the overlaps;
we need to define branch cuts in the above charts in order to work globally on $\Aspace$ and $\Bspace$.
This corresponds to choosing the singular part of the singular affine structure, which again we will canonically choose to be the barycentric complex of the $(d-1)$-dimensional faces.

To describe this explicitly, define subsets $S_i\subset\Star(m_i)$ and $T_i\subset\Star(n_i)$ by
\begin{equation*}
    S_i:=\{n_i=\min_jn_j\}
    \quad\text{and}\quad
    T_i:=\{m_i=\min_jm_j\}.
\end{equation*}
\begin{center}
\begin{figure}[H]
\begin{tikzpicture}[scale=0.6]
\coordinate (4) at (0,4);
\coordinate (3) at (3,2);
\coordinate (1) at (-3,0);
\coordinate (5) at (2,-2);
\coordinate (03) at (-0.5,-1);
\coordinate (13) at (0,1);
\coordinate (013) at (2/3,0);
\coordinate (012) at (5/3,4/3);
\coordinate (023) at (-1/3,2/3);
\coordinate (123) at (0,2);
\coordinate(23) at (-3/2,2);
\coordinate (02) at (1,1);
\coordinate (12) at (3/2,3);
\coordinate (01) at (5/2,0);
\filldraw (3) circle (1pt);
\filldraw (4) circle (1pt);
\filldraw (5) circle (1pt);
\filldraw (1) circle (1pt);
\node[right] at (3) {$n_3$};
\node[above] at (4) {$n_2$};
\node[below] at (5) {$n_0$};
\node[left] at (1) {$n_1$};
\draw[dotted] (023)--(02);
\draw[dotted] (013)--(01);
\draw[dotted] (123)--(12);
\draw(03)--(013)--(13);
\draw (03)--(023)--(23);
\draw (13)--(123)--(23);
\fill[red, fill opacity=0.15] (1) -- (03) -- (023)--(23);
\fill[orange, fill opacity=0.3] (1) -- (23) -- (123)--(13);
\fill[orange, fill opacity=0.2] (1) -- (03) -- (013)--(13);
\draw (3)--(4)--(1)--(5)--(3);
\draw[dashed] (1)--(3);
\draw (1)--(4)--(5);
\end{tikzpicture}
\caption{Subset $T_1$ for $d=2$}
\end{figure}
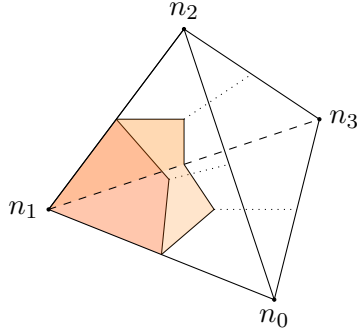
\end{center}
Their relative interiors are given by $S_i^\circ=\{n_i<\min_{j\ne i}n_j\}$ and $T_i^\circ=\{m_i<\min_{j\ne i}m_j\}$, respectively, and are open neighborhoods of $m_i$ and $n_i$ in $\Aspace$ and $\Bspace$, respectively. Note that $S_i^\circ\cap S_j^\circ=\emptyset$ and $T_i^\circ\cap T_j^\circ=\emptyset$ if $i\ne j$. We can easily describe these sets in terms of the parametrizations~\eqref{equ:Apar} and~\eqref{equ:Bpar}; for example,
\begin{equation*}
  S_i = \left\{\sum_j\a_jm_j\mid \ \a_i\ge\max_{j\ne i}\a_j\ge\min_{j\ne i}\a_j=0,\ \sum_j\a_j=1\right\}.
\end{equation*}

We now define the integral affine structure on $S_i^\circ$ and $T_i^\circ$ as the pullback of the integral affine structures on $\R^d$ under the maps $p_{i,j}^{-1}$ and $q_{i,j}^{-1}$, respectively. This is compatible with the integral affine structure on the open simplices $\sigma_l^\circ$ and $\tau_l^\circ$ as above. Moreover, the integral affine structures on $S_i^\circ$ and $S_j^\circ$ (resp.\ $T_i^\circ$ and $T_j^\circ$) are trivially compatible for $i\ne j$, since $S_i^\circ\cap S_j^\circ=\emptyset$ (resp.\ $T_i^\circ\cap T_j^\circ=\emptyset$).
We therefore obtain integral affine structures on 
\begin{equation*}
    \Aspace_0 := \bigcup_i\sigma_i^\circ\cup\bigcup_iS_i^\circ
    \quad\text{and}\quad
    \Bspace_0 := \bigcup_i\tau_i^\circ\cup\bigcup_iT_i^\circ,
\end{equation*}
and $\Aspace\setminus\Aspace_0$, $\Bspace\setminus\Bspace_0$ have codimension two.

%
%
\subsection{Pairing and symmetries}
The pairing $M_\R\times N_\R\to\R$ restricts to a pairing
\[
  \Aspace\times\Bspace\to\R.
\]
Given $m\in\Aspace$ and $n\in\Bspace$, write $m=\sum_{j=0}^{d+1}\a_jm_j$ and $n=\sum_{j=0}^{d+1}\b_jn_j$, where $\min_j\a_j=\min\b_j=0$ and $\sum_j\a_j=\sum_j\b_j=1$. Using~\eqref{equ:intno} we then have
\begin{equation}\label{equ:pairing}
  \langle m,n\rangle=1-(d+2)\sum_j\a_j\b_j.
\end{equation}

In~\S\ref{sec:sym} it will be important to understand how the pairing interacts with the action of the permutation group $G=S_{d+2}$ on $M'=\Z^{d+2}$, and its various induced actions. Note that $G$ acts on the sets of simplices $\sigma_i$, $\tau_i$ and stars $\Star(m_i)$, $\Star(n_i)$, $S_i$, $T_i$, mapping relative interiors to relative interiors. We also have $\langle g(m),g(n)\rangle=\langle m,n\rangle$ for $m\in\Aspace$, $n\in\Bspace$, but not always $\langle m,g(n)\rangle=\langle m,n\rangle$.  
\begin{lem}\label{lem:pairsym2}
  Pick any $m\in\Aspace$, $n\in\Bspace$, and let $G(m,n)\subset G$ be the set of $g\in G$ such that $\langle m,g(n)\rangle$ is maximal. Then, for any $i\in\{0,1,\dots,d+1\}$ we have:
  \begin{itemize}
  \item[(i)]
    if $m\in\sigma_i$ (resp $m\in S_i$), then $g(n)\in T_i$ (resp. $g(n)\in\tau_i$) for some $g\in G(m,n)$;
  \item[(i')]
    if $m\in\sigma^\circ_i$ (resp $m\in S_i^\circ$), then $g(n)\in T_i$ (resp. $g(n)\in\tau_i$) for all $g\in G(m,n)$;
  \item[(ii)]
    if $n\in\tau_i$ (resp $n\in T_i$), then $g(m)\in S_i$ (resp. $g(m)\in\sigma_i$) for some $g\in G(m,n)$;
  \item[(ii')]
    if $n\in\tau_i^\circ$ (resp $n\in T_i^\circ$), then $g(m)\in S_i$ (resp. $g(m)\in\sigma_i$) for all $g\in G(m,n)$.
  \end{itemize}
\end{lem}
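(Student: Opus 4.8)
The plan is to pass to the barycentric parametrizations~\eqref{equ:Apar},~\eqref{equ:Bpar} and recognize the statement as a two‑marginal assignment problem, solved by the rearrangement inequality. Write $m=\sum_j\a_j m_j$ and $n=\sum_k\b_k n_k$ with $\a,\b\ge 0$, $\min_j\a_j=\min_k\b_k=0$, $\sum_j\a_j=\sum_k\b_k=1$; these representations are unique since $m\in\partial\Delta$ and $n\in\partial\Delta^\vee$. Unwinding the definitions, $m\in\sigma_i\iff\a_i=0$, $m\in\sigma_i^\circ\iff\operatorname{argmin}_j\a_j=\{i\}$, $m\in S_i\iff\a_i=\max_j\a_j$ (the auxiliary clause $\min_{j\ne i}\a_j=0$ in the definition of $S_i$ being automatic once $\sum_j\a_j=1$), $m\in S_i^\circ\iff\operatorname{argmax}_j\a_j=\{i\}$, and symmetrically for $n$, $\b$, $\tau_i$, $T_i$; moreover $g$ permutes barycentric coordinates, so $g(n)\in T_i\iff\b_{g^{-1}(i)}=\max_k\b_k$ and $g(n)\in\tau_i\iff\b_{g^{-1}(i)}=0$, and likewise for $g(m)$ with $\a$. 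By~\eqref{equ:pairing}, $\langle m,g(n)\rangle=1-(d+2)\sum_j\a_j\b_{g^{-1}(j)}$, so $G(m,n)=\operatorname{argmin}_{g}\Phi(g)$ with $\Phi(g):=\sum_j\a_j\b_{g^{-1}(j)}=\sum_k\a_{g(k)}\b_k$.

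The engine is the exchange computation: for any transposition $(k\ k')$,
\[
  \Phi\bigl(g\circ(k\ k')\bigr)-\Phi(g)=\bigl(\a_{g(k')}-\a_{g(k)}\bigr)\bigl(\b_k-\b_{k'}\bigr),
\]
so if $g\in G(m,n)$ and the right‑hand side is $\le0$ then $g\circ(k\ k')\in G(m,n)$, while if $g\in G(m,n)$ and the right‑hand side is $<0$ we get a contradiction. To prove~(i), take $m\in\sigma_i$ (so $\a_i=\min_j\a_j=0$) and any $g\in G(m,n)$; put $k_0:=g^{-1}(i)$, so $\a_{g(k_0)}=\a_i=0$. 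If $\b_{k_0}=\max_k\b_k$ then $g(n)\in T_i$ and we are done; otherwise pick $k_1\ne k_0$ with $\b_{k_1}=\max_k\b_k>\b_{k_0}$, and observe that the increment for $(k_0\ k_1)$ equals $\a_{g(k_1)}(\b_{k_0}-\b_{k_1})\le0$; hence $g':=g\circ(k_0\ k_1)\in G(m,n)$ and $(g')^{-1}(i)=k_1$, so $g'(n)\in T_i$. If moreover $m\in\sigma_i^\circ$ then $g(k_1)\ne i$ forces $\a_{g(k_1)}>\a_i=0$, so the increment is strictly negative — impossible — and therefore $\b_{k_0}=\max_k\b_k$ for every $g\in G(m,n)$, which gives~(i'). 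The ``resp.'' half (start from $m\in S_i$, i.e.\ $\a_i=\max_j\a_j$) is identical after exchanging the roles of $\max$ and $\min$ for $\b$.

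Statements~(ii),~(ii') are the mirror images under interchanging $\a\leftrightarrow\b$ (equivalently $m\leftrightarrow n$): $\Phi(g)=\sum_k\a_{g(k)}\b_k$ has the same shape as the objective of the transposed problem after replacing $g$ by $g^{-1}$, and $g\mapsto g^{-1}$ is a bijection of $G$ with $\langle m,g(n)\rangle=\langle g^{-1}(m),n\rangle$; transporting the exchange argument of the previous paragraph through this identification yields the conclusions about $g(m)$, $S_i$ and $\sigma_i$. I expect the only real obstacle to be the bookkeeping in this last step — keeping straight the inverse hidden in $G(m,n)=\operatorname{argmin}_g\langle m,g(n)\rangle$ when converting a statement about the action of $g$ on $n$ into one about its action on $m$ — together with the elementary verification that the normalization clauses in the definitions of $S_i$ and $T_i$ are automatically satisfied.
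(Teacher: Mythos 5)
Your proof of (i) and (i') is correct and is in substance the paper's own argument: the paper modifies a maximizer $g'$ by composing with the transposition $(i\ j)$ on the left, which equals your right-composition by $(g'^{-1}(i)\ g'^{-1}(j))=(k_0\ k_1)$, and the paper's displayed increment $(d+2)(\alpha_j-\alpha_i)(\beta_{g'^{-1}(j)}-\beta_{g'^{-1}(i)})$ is exactly your exchange formula. The normalization clauses and the identification of $\sigma_i,S_i,\tau_i,T_i$ in barycentric coordinates are also as you say.

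The step you defer as ``bookkeeping'' in (ii)/(ii') is, however, a genuine gap. Carried out honestly, your duality $\langle m,g(n)\rangle=\langle g^{-1}(m),n\rangle$, $g\mapsto g^{-1}$, turns (i)/(i') for the transposed problem into the assertion that $g^{-1}(m)\in S_i$ (resp.\ $\sigma_i$) for some/all $g\in G(m,n)$ --- not $g(m)$. These conclusions genuinely differ, since $G(m,n)$ need not be stable under inversion, and the version with $g(m)$ (i.e.\ the lemma as printed) is in fact false. Take $d=2$, $m=\tfrac12 m_1+\tfrac{3}{10}m_2+\tfrac15 m_3\in\sigma_0^\circ$ and $n=\tfrac15 n_1+\tfrac{3}{10}n_2+\tfrac12 n_3\in\tau_0^\circ$. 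Minimizing $\Phi(g)=\sum_j\alpha_j\beta_{g^{-1}(j)}$ over $S_4$, the anti-monotone matching is unique, and $G(m,n)$ consists of the single $4$-cycle $g\colon 0\mapsto 1\mapsto 2\mapsto 3\mapsto 0$. Then $g(m)=\sum_j\alpha_{g^{-1}(j)}m_j=\tfrac15 m_0+0\cdot m_1+\tfrac12 m_2+\tfrac{3}{10}m_3$ lies in $S_2$ and not in $S_0$, so both (ii) and (ii') fail as literally stated, whereas $g^{-1}(m)=\tfrac12 m_0+\tfrac{3}{10}m_1+\tfrac15 m_2\in S_0$, exactly as your duality predicts; parts (i), (i') do hold here, since $g(n)=\tfrac12 n_0+\tfrac15 n_2+\tfrac{3}{10}n_3\in T_0$. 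So what you flagged is not bookkeeping but a needed correction to the statement: in (ii)/(ii') one must write $g^{-1}(m)$, or equivalently replace $G(m,n)$ by the set of $g$ maximizing $\langle g(m),n\rangle$. The paper's proof only declares (ii)/(ii') ``analogous'' and never writes them out, and the sole downstream use (Lemma~\ref{lem:pooh}) invokes them with $g=\mathrm{id}$, where the discrepancy vanishes; your transposed exchange argument does prove the corrected statement, so the remedy is to restate, not to find a new argument --- but as submitted your proof asserts a false conclusion at this point.
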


\begin{proof}
It suffices to prove~(i) and~(i'); the proofs of~(ii) and~(ii') are analogous.
Write $m=\sum_j\a_jm_j$ and $n=\sum_j\b_jn_j$, with $\min_j\a_j=\min_j\b_j=0$ and $\sum_j\a_j=\sum_j\b_j=1$.

To prove~(i), suppose $m\in\sigma_i$ (resp.\ $m\in S_i$), so that $\a_i=0$ (resp.\ $\a_i=\max_l\a_l$). Pick any $g'\in G(m,n)$, and choose $j$ such that $g'(n)\in T_j$ (resp.\ $g'(n)\in\tau_j$), that is, $\b_{g'^{-1}(j)}=\max_l\b_l$ (resp.\ $\b_{g'^{-1}(j)}=0$). Set $g=h\circ g'$, where $h\in G$ is the transposition of $\{0,1,\dots,d+1\}$ exchanging $i$ and $j$. Then $g(n)\in T_i$ (resp.\ $g(n)\in\tau_i$), and we claim that $g\in G(m,n)$. But~\eqref{equ:pairing} implies
\begin{equation*}
  \langle m,g(n)-g'(n)\rangle
  =(d+2)(\a_j-\a_i)(\b_{g'^{-1}(j)}-\b_{g'^{-1}(i)})\ge0.
\end{equation*}    

The proof of~(i') is similar.
Assume $m\in\sigma_i^\circ$ (resp.\ $m\in S_i^\circ$), so that $\min_{j\ne i}\a_j>\a_i=0$ (resp.\ $\a_i>\max_{j\ne i}\a_j$). It suffices to prove that if $n\not\in  T_i$ (resp.\ $n\not\in\tau_i$), then there exists $g\in G$ such that $\langle m,g(n)-n\rangle>0$. But $n\not\in T_i$ (resp.\ $n\not\in\tau_i$) means that $\b_j>\b_i$ for some $j$ (resp.\ $\b_i>0$). Let $g\in G$ be transposition exchanging $i$ and $j$. Then
\begin{equation*}
  \langle m,g(n)-n\rangle
  =(d+2)(\a_j-\a_i)(\b_j-\b_i)>0,
\end{equation*}
completing the proof.
\end{proof}
\color{black}
%
%
\subsection{Pairing in coordinate charts}
Lemma~\ref{lem:pairsym2} suggests that the pairing between $\Aspace$ and $\Bspace$ is most natural between $\sigma_i$ and $\mathrm{Star}(n_i)$, or between $\Star(m_i)$ and $\tau_i$. We now calculate the pairing between elements in compatible coordinate charts defined on these regions.

\corr{
\begin{prop}\label{prop:chart_pair}
Fix indices $i\not= j$. For $x\in p_{j,i}^{-1}(\sigma_i)$ and $y\in\tilde T= q_{i,j}^{-1}(\mathrm{Star}(n_i))$, we have:
\begin{equation}\label{equ:cp1}
\langle x, y\rangle = \langle p_{j,i}(x)-m_j, q_{i,j}(y)\rangle.
\end{equation}
Similarly, for all $x\in \tilde S=p_{i,j}^{-1}(\mathrm{Star}(m_i))$ and $y\in  q_{j,i}^{-1}(\tau_i)$, we have
\begin{equation}\label{equ:cp2}
\langle x, y\rangle = \langle p_{i,j}(x), q_{j,i}(y)-n_j\rangle.
\end{equation}
\end{prop}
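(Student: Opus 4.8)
\emph{Plan.} Both identities will be deduced from a single elementary ``barycentric expansion'' in $M_\R$ and its dual in $N_\R$, after unwinding the definitions of $p_{\cdot,\cdot}$ and $q_{\cdot,\cdot}$ in~\eqref{equ:pijinv},~\eqref{equ:qijinv} and using the defining conditions of $\sigma_i$ and $\tau_i$ (with the convention that $n_j$ also denotes the linear function $m\mapsto\langle m,n_j\rangle$, so that $m\in\sigma_i$ forces $\langle m,n_i\rangle=1$, and likewise $n\in\tau_i$ forces $\langle m_i,n\rangle=1$). Throughout, $\langle s,t\rangle$ denotes the standard pairing on $\R^d$ in the coordinates indexed by $k\ne i,j$; this is exactly the convention behind the sign change in the definition of $q^{-1}_{\cdot,\cdot}$.

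\emph{Key lemma.} Fix $j$. Since $m_k-m_j=(d+2)(e_k-e_j)$ for $k\ne j$, the vectors $\{m_k-m_j\}_{k\ne j}$ form a basis of $M_\R$. Hence every $m\in M_\R$ has a unique expansion $m-m_j=\sum_{k\ne j}c_k(m_k-m_j)$; pairing with $n_l$ for $l\ne j$ and using~\eqref{equ:intno} (so that $\langle m_k-m_j,n_l\rangle=-(d+2)$ if $k=l$ and $0$ if $k\ne l$, for $k,l\ne j$) gives $c_l=\tfrac{1-\langle m,n_l\rangle}{d+2}$. Therefore
\[
  m-m_j=\sum_{k\ne j}\frac{1-\langle m,n_k\rangle}{d+2}\,(m_k-m_j)\qquad(m\in M_\R),
\]
and, dually, $n-n_j=\sum_{k\ne j}\tfrac{1-\langle m_k,n\rangle}{d+2}(n_k-n_j)$ for $n\in N_\R$. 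The crucial point is that the $k=i$ term vanishes as soon as $\langle m,n_i\rangle=1$, i.e.\ $m\in\sigma_i$ (resp.\ as soon as $\langle m_i,n\rangle=1$, i.e.\ $n\in\tau_i$), leaving precisely a sum over $k\ne i,j$ --- the index set of the charts.

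\emph{Proof of~\eqref{equ:cp1} and~\eqref{equ:cp2}.} For~\eqref{equ:cp1}, put $m=p_{j,i}(s)\in\sigma_i$ and $n=q_{i,j}(t)$. By the definitions, $s_k=\langle m,n_i-n_k\rangle=1-\langle m,n_k\rangle$ for $k\ne i,j$ (using $\langle m,n_i\rangle=1$), and $t_k=\langle\tfrac{m_k-m_j}{d+2},n\rangle$. The lemma, with the $k=i$ term deleted, gives $m-m_j=\sum_{k\ne i,j}\tfrac{s_k}{d+2}(m_k-m_j)$; pairing with $n$ yields $\langle p_{j,i}(s)-m_j,q_{i,j}(t)\rangle=\langle m-m_j,n\rangle=\sum_{k\ne i,j}s_kt_k=\langle s,t\rangle$. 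For~\eqref{equ:cp2}, symmetrically put $m=p_{i,j}(s)$ and $n=q_{j,i}(t)\in\tau_i$, so that $s_k=\langle m,n_j-n_k\rangle$ and $t_k=\langle e_k-e_i,n\rangle=\tfrac{\langle m_k,n\rangle-\langle m_i,n\rangle}{d+2}=-\tfrac{1-\langle m_k,n\rangle}{d+2}$ (using $\langle m_i,n\rangle=1$); the dual expansion with the $k=i$ term deleted reads $n-n_j=-\sum_{k\ne i,j}t_k(n_k-n_j)$, and pairing with $m$ together with $\langle m,n_k-n_j\rangle=-s_k$ gives $\langle p_{i,j}(s),q_{j,i}(t)-n_j\rangle=\langle m,n-n_j\rangle=\sum_{k\ne i,j}s_kt_k=\langle s,t\rangle$.

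\emph{Main obstacle.} There is no genuine obstacle: all the content is in the key lemma, which is a one-line linear-algebra fact. The only thing that requires care is bookkeeping --- tracking the signs and keeping straight which chart (which operator, and which \emph{ordered} pair of indices) decorates each of the two factors, and which of the two constraints $m\in\sigma_i$ or $n\in\tau_i$ is being used to kill the $k=i$ term. An alternative, more computational route is to substitute the parametrizations~\eqref{equ:Apar}--\eqref{equ:Bpar} and expand everything via~\eqref{equ:pairing}; this also works, but is less transparent.
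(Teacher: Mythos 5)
Your proof is correct and is essentially the paper's argument: both reduce to expanding $m-m_j$ (resp.\ $n-n_j$) in the basis $\{m_k-m_j\}_{k\ne i,j}$ (resp.\ $\{n_k-n_j\}_{k\ne i,j}$), using membership in $\sigma_i$ (resp.\ $\tau_i$) to kill the $k=i$ term, and matching the resulting coefficients against the chart coordinates via~\eqref{equ:intno}. The only cosmetic difference is that the paper reads off the coefficients from the barycentric parametrizations~\eqref{equ:Apar}--\eqref{equ:Bpar}, whereas you extract them by pairing with the dual vertices; the computations are identical.
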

}

Note that the pairing on the right-hand sides of \eqref{equ:cp1}, \eqref{equ:cp2} is between $M_\R$ and $N_\R$, while the pairing on the left-hand side is the scalar product on $\R^d$.

\begin{proof}
  Pick $m\in\sigma_i\subset\Star(m_j)$ and $n\in\Star(n_i)$. Write 
  $m = \sum_{k\ne i} \alpha_km_k$ and $n = \sum_{k}\beta_kn_k$, where $\a_k,\b_k\ge0$ and $\sum_k\a_k=\sum_k\b_k=1$. Then $m-m_j=\sum_{k\ne i,j}\a_k(m_k-m_j)$, so that
  \begin{equation*}
    \langle m-m_j,n\rangle=(d+2)\sum_{k\ne i,j}\a_k(\b_j-\b_k)
  \end{equation*}
  in view of~\eqref{equ:intno}.  On the other hand,~\eqref{equ:pijinv} and~\eqref{equ:qijinv} give
\[
  p_{j,i}^{-1}(m) = (d+2)(\alpha_k )_{k\not=i,j}
  \quad\text{and}\quad
  q_{i,j}^{-1}(n) = ( \beta_j - \beta_k)_{k\not=i,j},
\]
which implies $\langle m-m_j,n\rangle=\langle p_{j,i}^{-1}(m), q_{i,j}^{-1}(n)\rangle$. We now obtain~\eqref{equ:cp1} by inverting the coordinate maps, and~\eqref{equ:cp2} is proved in the same way.
\end{proof}
%
%
%
%
\section{The c-transform and the class of c-convex functions}\label{sec:cconvex}
Denote by $L^\infty(\Aspace)$ and $L^\infty(\Bspace)$ the space of bounded real-valued functions on $\Aspace$ and $\Bspace$, respectively.
%
%
\subsection{General definitions and properties}
We start by defining the \emph{c-transforms}
\begin{equation*}
  L^\infty(\Aspace)\to L^\infty(\Bspace)
  \quad\text{and}\quad
  L^\infty(\Bspace)\to L^\infty(\Aspace)
\end{equation*}
as follows.
 Given $\phi\in L^\infty(\Aspace)$, we define a new function $\phi^c\in L^\infty(\Bspace)$ by
\begin{equation}\label{equ:ctransf1}
  \phi^c(n):=\sup_{m\in\Aspace}\langle m,n\rangle-\phi(m).
\end{equation}
Note that $\phi^c$ is bounded since $-(d+1)\le\langle m,n\rangle\le 1$.
Similarly, given $\psi\in L^\infty(\Bspace)$, we define $\psi^c\in L^\infty(\Aspace)$ by
\begin{equation}\label{equ:ctransf2}
  \psi^c(m):=\sup_{n\in\Bspace}\langle m,n\rangle-\psi(n).
\end{equation}
\begin{rmk}\label{rmk:cost}
  The c-transform in this setting is inspired by the usual one in optimal transport~\cite{AG13}, and can be defined much more generally, e.g.\ when $X=\Aspace$ and $Y=\Bspace$ are replaced by arbitrary sets, and $\langle m,n\rangle$ by an arbitrary `cost' function $c\colon X\times Y\to\R$. In that generality, $\phi^c$ and $\psi^c$ may take infinite values, but our cost function is uniformly bounded, so we can restrict to bounded functions. 
\end{rmk}
It is a general fact that $(\phi+a)^c=\phi^c-a$ and $(\psi+a)^c=\psi^c-a$ for any bounded functions $\phi,\psi$ and any constant $a$. Moreover, if $\phi_1\le\phi_2$, then $\phi_1^c\ge\phi_2^c$, and similarly for the c-transform in the other direction. This formally implies that the c-transforms are contractive: $\|\phi_1^c-\phi_2^c\|\le\|\phi_1-\phi_2\|$ and $\|\psi_1^c-\psi_2^c\|\le\|\psi_1-\psi_2\|$ for $\phi_i\in L^\infty(\Aspace)$ and $\psi_i\in L^\infty(\Bspace)$, where $\|\cdot\|$ denotes the $\sup$ norm.

In our case, we also have $0^c=1$, as follows from $\max_{m\in\Aspace}\langle m,n\rangle=1$ for all $n\in\Bspace$ and $\max_{n\in\Bspace}\langle m,n\rangle=1$ for all $m\in\Aspace$.

\begin{lem}\label{lem:2c}
  For any bounded functions $\phi\colon\Aspace\to\R$ and $\psi\colon\Bspace\to\R$, we have $\phi^{cc}\le\phi$, $\psi^{cc}\le\psi$, $\phi^{ccc}=\phi^c$, and $\psi^{ccc}=\psi^c$.
\end{lem}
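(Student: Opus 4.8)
The plan is to deduce everything from two elementary facts about the c-transform already isolated in the text: that it is order-reversing, and the ``Fenchel--Young''-type inequality $\langle m,n\rangle \le \phi(m) + \phi^c(n)$ which holds for all $m\in\Aspace$, $n\in\Bspace$ and any bounded $\phi$ (and symmetrically with the roles of $\Aspace$, $\Bspace$ swapped). The first two inequalities come directly from Fenchel--Young. Indeed, fixing $m\in\Aspace$, the inequality $\langle m,n\rangle - \phi^c(n) \le \phi(m)$ holds for every $n\in\Bspace$, so taking the supremum over $n$ gives $\phi^{cc}(m) = \sup_{n}(\langle m,n\rangle - \phi^c(n)) \le \phi(m)$; hence $\phi^{cc}\le\phi$ pointwise. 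The same argument applied to $\psi\in L^\infty(\Bspace)$ gives $\psi^{cc}\le\psi$.

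For the identities $\phi^{ccc}=\phi^c$ and $\psi^{ccc}=\psi^c$, I would argue as follows. Apply the inequality $\psi^{cc}\le\psi$ just proved with $\psi=\phi^c\in L^\infty(\Bspace)$: this yields $\phi^{ccc} = (\phi^c)^{cc} \le \phi^c$. For the reverse inequality, start from $\phi^{cc}\le\phi$ and apply the c-transform, which reverses inequalities, to get $\phi^{ccc} = (\phi^{cc})^c \ge \phi^c$. Combining the two gives $\phi^{ccc}=\phi^c$. The proof of $\psi^{ccc}=\psi^c$ is identical, using $\phi^{cc}\le\phi$ with $\phi=\psi^c$ and then applying the c-transform to $\psi^{cc}\le\psi$.

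There is essentially no obstacle here; the only thing to be slightly careful about is that the two c-transforms go between different spaces, so one must keep track of which of the four statements ($\phi^{cc}\le\phi$ on $\Aspace$, $\psi^{cc}\le\psi$ on $\Bspace$, and the order-reversal in each direction) is being invoked at each step. All of these were recorded in the paragraph preceding the lemma, so the proof is a two-line formal manipulation once Fenchel--Young is written down. It is worth noting that, unlike in the classical Legendre-transform setting, one does not expect $\phi^{cc}=\phi$ in general (that would require $\phi$ to be ``c-convex''), which is precisely why only the one-sided inequalities and the stabilization at the third iterate are asserted.
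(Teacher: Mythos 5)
Your proof is correct and is precisely the standard formal argument that the paper invokes by writing ``This is formal, see~\cite[p.8]{AG13}''; the Fenchel--Young inequality gives $\phi^{cc}\le\phi$ and $\psi^{cc}\le\psi$, and combining this with order-reversal yields the stabilization $\phi^{ccc}=\phi^c$ exactly as you describe. Nothing further is needed.
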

\begin{proof}
  This is formal, see~\cite[p.8]{AG13}.
\end{proof}
\begin{defi}\label{defi:PandQ}
  We define $\cP\subset L^\infty(\Aspace)$ and $\cQ\subset L^\infty(\Bspace)$ as the images of the c-transform,
\[
\mathcal{P} := \{\phi = v^c\ |\ v\in L^\infty(\Bspace)\}\quad\text{and}\quad \mathcal{Q} := \{\psi = u^c\ |\ u\in L^\infty(\Aspace)\},
\]
and equip $\cP$ and $\cQ$ with the supremum norm. 
\end{defi}
The functions in $\cP$ and $\cQ$ are called \emph{c-convex}.
It follows from the remarks above that the spaces $\cP$ and $\cQ$ of c-convex functions are invariant under the addition of a real constant, and they consist of bounded functions. They also contain all constant functions.
\begin{lem}
  The c-transform defines isometric bijections $\cP\to\cQ$ and $\cQ\to\cP$ that are inverse to each other.
\end{lem}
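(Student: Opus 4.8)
The statement is entirely formal, and follows by combining Lemma~\ref{lem:2c} with the contractivity of the c-transforms noted just before it. First I would check that the two c-transforms restrict to maps between $\cP$ and $\cQ$: by Definition~\ref{defi:PandQ}, for \emph{any} $\phi\in L^\infty(\Aspace)$ we have $\phi^c\in\cQ$ (take $u=\phi$ in the definition of $\cQ$), and symmetrically $\psi^c\in\cP$ for any $\psi\in L^\infty(\Bspace)$; in particular the c-transforms send $\cP$ into $\cQ$ and $\cQ$ into $\cP$.

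Next I would show these restrictions are mutually inverse. Let $\phi\in\cP$, so $\phi=v^c$ for some $v\in L^\infty(\Bspace)$. Then $(\phi^c)^c=\phi^{cc}=v^{ccc}=v^c=\phi$ by Lemma~\ref{lem:2c}. The symmetric computation gives $(\psi^c)^c=\psi$ for every $\psi\in\cQ$. Hence the two restricted c-transforms are inverse bijections between $\cP$ and $\cQ$. (In particular $\phi^{cc}=\phi$ for all $\phi\in\cP$ and $\psi^{cc}=\psi$ for all $\psi\in\cQ$, a fact I will reuse.)

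Finally, for the isometry statement I would invoke the contractivity inequality $\|\phi_1^c-\phi_2^c\|\le\|\phi_1-\phi_2\|$, valid for all bounded functions. Given $\phi_1,\phi_2\in\cP$, applying it once yields $\|\phi_1^c-\phi_2^c\|\le\|\phi_1-\phi_2\|$, and applying it again to $\phi_1^c,\phi_2^c\in\cQ$ yields $\|\phi_1^{cc}-\phi_2^{cc}\|\le\|\phi_1^c-\phi_2^c\|$; since $\phi_i^{cc}=\phi_i$ the left-hand side equals $\|\phi_1-\phi_2\|$, forcing equality throughout. The same argument on $\cQ$ handles the other direction. There is no real obstacle here beyond bookkeeping; the only point needing care is the identity $\phi^{cc}=\phi$ on $\cP$ (and its analogue on $\cQ$), which is exactly what Lemma~\ref{lem:2c} provides.
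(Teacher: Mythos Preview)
Your proof is correct and follows essentially the same approach as the paper: Lemma~\ref{lem:2c} gives the mutual-inverse property, and the isometry follows from applying contractivity in both directions. The paper's version is simply more terse, compressing your three steps into two sentences.
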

\begin{proof}
  By Lemma~\ref{lem:2c}, the two maps are bijective, and inverse to one another. As they are both contractive, they must be isometries.
\end{proof}

\begin{lem}\label{lem:Lip}
  The functions in $\cP$ and $\cQ$ are uniformly Lipschitz continuous.
\end{lem}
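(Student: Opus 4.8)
The plan is to exploit the fact that, by their very definition, c-convex functions are suprema of affine functions with uniformly bounded gradients, so the Lipschitz bound comes essentially for free and is automatically uniform.

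First I would treat $\psi\in\cQ$. By Definition~\ref{defi:PandQ} we have $\psi=u^c$ for some $u\in L^\infty(\Aspace)$, i.e.
\[
  \psi(n)=\sup_{m\in\Aspace}\bigl(\langle m,n\rangle-u(m)\bigr),\qquad n\in\Bspace.
\]
I would extend the right-hand side to a function $\widetilde\psi$ on all of $N_\R$ by the same formula. Fix a norm on $N_\R$ and the dual norm on $M_\R$. Each function $n\mapsto\langle m,n\rangle-u(m)$ is affine with gradient $m$, hence Lipschitz with constant $\|m\|$; since $\Aspace=\partial\Delta$ is compact, $C_\Aspace:=\sup_{m\in\Aspace}\|m\|<\infty$, and since $u$ is bounded the supremum $\widetilde\psi$ is finite — in fact bounded — everywhere on $N_\R$. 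A supremum of $C_\Aspace$-Lipschitz functions that is finite at one point is $C_\Aspace$-Lipschitz, so $\widetilde\psi$ is a $C_\Aspace$-Lipschitz convex function on $N_\R$. Restricting to $\Bspace$ shows $\psi$ is $C_\Aspace$-Lipschitz for the metric on $\Bspace$ induced from $N_\R$, which is bi-Lipschitz equivalent (with constants depending only on $d$) to the intrinsic path metric of the piecewise integral affine structure on $\Bspace$. The point is that $C_\Aspace$ depends only on $\Delta$, hence only on $d$, and not on $u$ — so the bound is uniform over $\cQ$.

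The argument for $\cP$ is symmetric: if $\phi=v^c$ with $v\in L^\infty(\Bspace)$, then $\phi(m)=\sup_{n\in\Bspace}\bigl(\langle m,n\rangle-v(n)\bigr)$ is a supremum of affine functions of $m$ with gradients $n\in\Bspace=\partial\Delta^\vee$; since $\Bspace$ is compact with $C_\Bspace:=\sup_{n\in\Bspace}\|n\|<\infty$, the same reasoning gives that every $\phi\in\cP$ is $C_\Bspace$-Lipschitz, uniformly in $v$.

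There is essentially no hard step; the only points needing a line of care are (a) noting that the supremum defining $\widetilde\psi$ is finite everywhere on $N_\R$, so that the elementary "sup of $C$-Lipschitz functions is $C$-Lipschitz" applies — this follows from boundedness of $u$ and of $\Aspace$ — and (b) recording which metric on $\Bspace$ is meant, and that the restricted Euclidean metric and the intrinsic one are equivalent with a constant depending only on $d$. Both are routine, so I expect the proof to be just a few lines.
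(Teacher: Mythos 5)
Your proof is correct and is essentially the paper's own argument: both write a c-convex function as a supremum of affine functions whose gradients range over the compact set $\Aspace$ (resp.\ $\Bspace$), so the supremum inherits a Lipschitz constant depending only on that set and not on the defining potential. The extra remarks about finiteness of the extended supremum and the choice of metric on $\Bspace$ are fine but not needed beyond what the paper records.
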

\begin{proof}
  Suppose $\psi$ is a bounded function on $\Bspace$. By definition, $\psi^c(m) = \sup_{n\in \Bspace}(\langle m,n\rangle-\psi(n))$; this defines a locally bounded function on $M_{\R}$. Each of the $\langle m, n\rangle - \psi(n)$ is linear, with uniform Lipschitz constant, since $\Bspace$ is compact. It follows that $\psi^c(m)$ is also Lipschitz on $M_\R$, with the same constant. 
  The same argument obviously works for $\cQ$.
\end{proof}

\begin{cor}\label{cor:compact}
  The spaces $\cP$ and $\cQ$ are closed subspaces of $C^0(\Aspace)$ and $C^0(\Bspace)$, respectively. Moreover, $\cP/\R$ and $\cQ/\R$ are compact.
\end{cor}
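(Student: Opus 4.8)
The plan is to derive everything from Lemma~\ref{lem:Lip} together with the characterization of c-convexity through the double c-transform supplied by Lemma~\ref{lem:2c}. First I would record that by Lemma~\ref{lem:Lip} the functions in $\cP$ and $\cQ$ are uniformly Lipschitz, hence continuous, so that $\cP\subset C^0(\Aspace)$ and $\cQ\subset C^0(\Bspace)$; it then remains to prove closedness and the compactness of the quotients.

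For closedness I would use the identity: a bounded function $\psi$ on $\Bspace$ lies in $\cQ$ if and only if $\psi^{cc}=\psi$. Indeed, if $\psi=u^c$ with $u\in L^\infty(\Aspace)$, then $\psi^{cc}=u^{ccc}=u^c=\psi$ by Lemma~\ref{lem:2c}; conversely, if $\psi^{cc}=\psi$ then $\psi=(\psi^c)^c$ with $\psi^c\in L^\infty(\Aspace)$, so $\psi\in\cQ$. Now take $\psi_k\in\cQ$ with $\psi_k\to\psi$ uniformly. Since the c-transform is contractive, the map $\eta\mapsto\eta^{cc}$ is (non-expansive, hence) continuous, so $\psi_k^{cc}\to\psi^{cc}$; but $\psi_k^{cc}=\psi_k\to\psi$, whence $\psi^{cc}=\psi$ and $\psi\in\cQ$. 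Thus $\cQ$ is closed in $C^0(\Bspace)$, and the symmetric argument shows $\cP$ is closed in $C^0(\Aspace)$.

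For the compactness of the quotients I would produce a compact set of representatives. Put $\cQ_0:=\{\psi\in\cQ\mid \min_{\Bspace}\psi=0\}$. Since $\cQ$ is stable under adding real constants, every element of $\cQ$ differs from a unique element of $\cQ_0$ by an additive constant, so the composition $\cQ_0\hookrightarrow\cQ\to\cQ/\R$ is a continuous surjection. By Lemma~\ref{lem:Lip} there is a common Lipschitz constant $C$ for all functions in $\cQ$; as $\Bspace$ is compact, every $\psi\in\cQ_0$ satisfies $0\le\psi\le C\cdot(\text{diameter of }\Bspace)$, so $\cQ_0$ is uniformly bounded and equicontinuous, hence relatively compact in $C^0(\Bspace)$ by Arzel\`a--Ascoli. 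Moreover $\cQ_0$ is actually closed, being the intersection of the closed set $\cQ$ with the closed condition $\min_{\Bspace}\psi=0$ (the map $\psi\mapsto\min_{\Bspace}\psi$ is $1$-Lipschitz for the sup norm), so $\cQ_0$ is compact. Consequently $\cQ/\R$, the continuous image of a compact space, is compact, and the same argument applies to $\cP/\R$. (One may also note that $\cQ/\R$ is Hausdorff, as it carries the quotient metric $d([\psi_1],[\psi_2])=\inf_{a\in\R}\|\psi_1-\psi_2-a\|$.)

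I do not anticipate a serious obstacle: the only delicate points are justifying closedness of $\cQ$ — handled via the criterion $\psi^{cc}=\psi$ and contractivity of the c-transform — and verifying that the normalized family $\cQ_0$ is genuinely closed, not merely relatively compact, before invoking Arzel\`a--Ascoli.
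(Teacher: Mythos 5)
Your proposal is correct and follows essentially the same route as the paper: closedness via the criterion $\psi^{cc}=\psi$ together with contractivity of the c-transform, and compactness of the quotient via Arzel\`a--Ascoli applied to a normalized slice (the paper uses $\max\phi=0$ where you use $\min\psi=0$, an immaterial difference). Your extra verification that the normalized slice is itself closed is a detail the paper asserts without comment, and is handled correctly.
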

\begin{proof}
  Lemma~\ref{lem:Lip} shows that $\cP\subset C^0(\Aspace)$. To prove that $\cP$ is a closed subspace, consider a sequence $(\phi_k)_1^\infty$ in $\cP$ converging uniformly to $\phi\in C^0(\Aspace)$. Then $\phi_k^{cc}=\phi_k$ for all $k$, so since the double c-transform is a continuous (even contractive) map from $C^0(\Aspace)\to\cP$, we must have $\phi^{cc}=\phi$, so that $\phi\in\cP$. Thus $\cP$ is closed.

  To prove that $\cP/\R$ is compact, it suffices to show that the closed subspace $\cP_0:=\{\phi\in\cP\mid\max\phi=0\}$ is compact. But Lemma~\ref{lem:Lip} shows that the functions in $\cP_0$ are uniformly bounded, and equicontinuous, so we conclude using the Arzel\`a--Ascoli theorem. 

The same argument shows that $\cQ\subset C^0(\Bspace)$ is closed and that $\cQ/\R$ is compact.
\end{proof}
\begin{rmk}\label{rmk:partialc}
 For any subset $\Aspace'\subset\Aspace$ and any bounded function $\phi\colon\Aspace'\to\R$, the function $\psi\colon\Bspace\to\R$ defined by $\psi=\sup_{m\in\Aspace'}(m-\phi(m))$ is c-convex. Indeed, $\psi$ is the c-transform of the extension of $\phi$ to $\Aspace$ defined by 
 $\phi|_{\Aspace\setminus\Aspace'}\equiv\sup_{\Aspace'}\phi+d+2$.
\end{rmk}

Lastly, we have the following definition, also standard in the optimal transport literature:
\begin{defi}
  Given $\psi\in\cQ$, the \emph{c-subgradient} of $\psi$ is the multi-valued map $\partial^c\psi\colon\Bspace\to\Aspace$ given by 
\begin{equation*}
  (\partial^c\psi)(n):=\{m\in\Aspace\mid \psi(n)+\psi^c(m)=\langle m,n\rangle\}
\end{equation*}
for any $n\in\Bspace$. 
\end{defi}
By continuity, the c-subgradient is nonempty. When it is a singleton, we call it a \emph{c-gradient}. We make similar definitions for $\phi\in\mathcal{P}$. It is evident that for $\p\in\cQ$, $m\in\Aspace$ and $n\in\Bspace$, we have $m\in(\partial^c\psi)(n)$ iff  $n\in(\partial^c\psi^c)(m)$, so that $\partial^c\psi$ and $\partial^c\psi^c$ are inverses, in the sense of multi-valued maps.
\begin{exam}
Let $\psi = \max_i m_i \equiv 1 \in \cQ_{\sym}$ where the max is taken over the vertices of $A$. Then  
$$\partial^c\psi^c(n) = \{ m\in A: \langle m,n \rangle = 1\} $$
is the face in $A$ dual to the smallest face in $B$ containing $n$.
\end{exam}

%
%
\subsection{Extension property}\label{sec:extend}
In~\cite{LiFermat}, Li studies the class of functions on $\Aspace$ and $\Bspace$ which satisfy what he calls the extension property, motivated in part by extension theorems for (quasi-)plurisubharmonic functions: see e.g.~\cite{CGZ13, CT14, WZ20, NWZ21, CGZ22}.
Here, similarly to~\cite[Proposition~3.19]{LiFermat}, we show that these extendable functions are exactly those in $\cP$ and $\cQ$, and discuss their canonical extensions to $M_{\R}$ and $N_{\R}$. 

We set some notation. As in~\cite{BB13}, let $\cP_+$ be the set of convex functions $\phi\colon M_\R\to\R$ such that $\phi=\max_jn_j+O(1)$, and $\cQ_+$ the set of convex functions $\psi\colon N_\R\to\R$ with $\psi=\max_jm_j+O(1)$.  Using~\eqref{equ:ctransf1} and~\eqref{equ:ctransf2}, we can view the c-transforms as maps
\begin{equation*}
  L^\infty(\Aspace)\to\cQ_+
  \quad\text{and}\quad
  L^\infty(\Bspace)\to\cP_+,
\end{equation*}
so that all functions in $\mathcal{P}$ and $\mathcal{Q}$ come from restrictions of functions in $\mathcal{P}_+$ and $\mathcal{Q}_+$. The following proposition shows the converse:
\begin{prop}\label{prop:extend}
  Suppose that $\psi\in\mathcal{Q}_+$. Then $\psi^{cc} \geq \psi$ on $N_{\R}\setminus (\Delta^{\ch})^{\circ}$. 
 It follows that $\psi^{cc} = \psi$ on $\Bspace=\partial\Delta^\vee$. The corresponding statements hold for $\phi\in\mathcal{P}_+$.
\end{prop}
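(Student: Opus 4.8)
The plan is to prove the inequality $\psi^{cc}\ge\psi$ on $N_\R\setminus(\Delta^\vee)^\circ$ directly, then observe that Lemma~\ref{lem:2c} gives the reverse inequality $\psi^{cc}\le\psi$ everywhere, so the two combine to $\psi^{cc}=\psi$ on $\Bspace=\partial\Delta^\vee$. The nontrivial content is thus the lower bound, which says that the c-transform, taken only against points of the \emph{boundary} simplex $\Aspace=\partial\Delta$, loses nothing when one c-transforms back at a point $n$ outside the open polar simplex.

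First I would unwind the definitions: for $n\in N_\R$,
\[
  \psi^{cc}(n)=\sup_{m\in\Aspace}\Bigl(\langle m,n\rangle-\sup_{n'\in\Bspace}(\langle m,n'\rangle-\psi(n'))\Bigr),
\]
so it suffices, given $n\notin(\Delta^\vee)^\circ$, to produce a single $m_\star\in\Aspace=\partial\Delta$ realizing the classical Legendre duality, i.e.\ with $\langle m_\star,n\rangle-\psi^c(m_\star)\ge\psi(n)$, equivalently $m_\star\in\partial\psi(n)$ in the usual convex-analytic sense (where $\psi\in\cQ_+$ is the given convex function on $N_\R$). Since $\psi$ is convex and finite on all of $N_\R$, its subgradient $\partial\psi(n)$ is a nonempty compact convex subset of $M_\R$; the whole point is to show it meets $\partial\Delta$. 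The growth hypothesis $\psi=\max_j m_j+O(1)$ pins down the recession behaviour: comparing $\psi$ with the piecewise-linear function $\max_j\langle m_j,\cdot\rangle$ (whose subgradients are exactly the faces of $\Delta$, by~\eqref{equ:intno} and the standard toric dictionary), one gets $\partial\psi(n)\subset\Delta$ for every $n$. So the task reduces to: if $n\notin(\Delta^\vee)^\circ$ then $\partial\psi(n)$ is not contained in the interior $\Delta^\circ$.

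This is the key step, and where I expect the main obstacle. The geometric idea: $\Delta^\vee=\{n:\langle m,n\rangle\le 1\ \forall m\in\Delta\}$, so $n\notin(\Delta^\vee)^\circ$ means $\sup_{m\in\Delta}\langle m,n\rangle\ge 1$, i.e.\ $\langle m_i,n\rangle\ge 1$ for some vertex $m_i$ of $\Delta$. Since $0^c=1$ and the c-transforms are contractive (hence $\psi$ and $\psi^c$ differ from the reference only by bounded amounts), one shows that along the ray $t\mapsto tn$, $t\to+\infty$, the convex function $\psi$ grows at rate $\max_j\langle m_j,n\rangle\ge 1$; picking $m_\star\in\partial\psi(n)$ and using that $\langle m_\star,n'-n\rangle\le\psi(n')-\psi(n)$ for all $n'$ — in particular for $n'=n+s n_i$ and $n'=n-sn_i$ as $s\to\infty$ — forces $\langle m_\star,n\rangle$ to equal this maximal growth rate, which is $\ge 1$; but $\langle m_\star, n\rangle < 1$ would hold if $m_\star$ lay in the interior relative to the supporting hyperplane of $\Delta^\vee$ through $n$. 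Making this last comparison precise is the crux: concretely, I would argue that $m_\star$ lies on the face of $\Delta$ exposed by $n$ (the face where $\langle\cdot,n\rangle$ is maximal over $\Delta$), and since $n\notin(\Delta^\vee)^\circ$ this maximal value is $\ge 1$ while the max over $\Delta$ is attained, exactly, on $\partial\Delta$ — so $m_\star\in\partial\Delta=\Aspace$. Then $\langle m_\star,n\rangle-\psi^c(m_\star)\ge\langle m_\star,n\rangle-(\langle m_\star,n\rangle-\psi(n))=\psi(n)$, giving $\psi^{cc}(n)\ge\psi(n)$.

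Finally, restricting to $n\in\Bspace=\partial\Delta^\vee\subset N_\R\setminus(\Delta^\vee)^\circ$ yields $\psi^{cc}\ge\psi$ there, and with Lemma~\ref{lem:2c} we conclude $\psi^{cc}=\psi$ on $\Bspace$. The statement for $\phi\in\cP_+$ follows by the identical argument with the roles of $M_\R,N_\R$ and $\Delta,\Delta^\vee$ interchanged, using that $\Delta^{\vee\vee}=\Delta$ and the symmetric growth condition $\phi=\max_j n_j+O(1)$. I would flag that the one genuinely delicate point is handling the recession/growth estimate rigorously — i.e.\ that $\psi\in\cQ_+$ really does grow like $\max_j\langle m_j,\cdot\rangle$ in every direction, not merely up to a sublinear error that could spoil the identification of $\partial\psi$ with faces of $\Delta$ — but the $O(1)$ bound in the definition of $\cQ_+$ is precisely what rules that out.
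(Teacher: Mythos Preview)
Your reduction to finding some $m_\star\in\Aspace=\partial\Delta$ with $\langle m_\star,n\rangle-\psi^c(m_\star)\ge\psi(n)$ is correct, but the claimed equivalence with $m_\star\in\partial\psi(n)$ is only an implication: membership in the convex subgradient is sufficient, not necessary, because $\psi^c$ is a supremum over $\Bspace$ only, not over all of $N_\R$. You then commit to the stronger goal of showing $\partial\psi(n)\cap\partial\Delta\ne\emptyset$ whenever $n\notin(\Delta^\vee)^\circ$, and this is simply false. Take for instance $\psi(n)=\log\sum_{j}e^{\langle m_j,n\rangle}$: this is smooth, strictly convex, and lies in $\cQ_+$ since $0\le\psi-\max_j m_j\le\log(d+2)$. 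Its gradient $\nabla\psi(n)=\sum_j e^{\langle m_j,n\rangle}m_j\big/\sum_j e^{\langle m_j,n\rangle}$ is a convex combination of the vertices with strictly positive weights, hence lies in $\Delta^\circ$ for \emph{every} $n\in N_\R$. So $\partial\psi(n)$ never meets $\partial\Delta$, and your argument about growth rates and exposed faces cannot be salvaged: the $O(1)$ slack in the definition of $\cQ_+$ is precisely what allows the subgradient to sit strictly inside $\Delta$.

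The paper's proof exploits the extra freedom you discarded. One takes any $m'\in\partial\psi(n_0)\subset\Delta$, and then uses that $n_0\notin(\Delta^\vee)^\circ$ to choose a direction $m_0\in M_\R$ with $\sup_{n'\in\Bspace}\langle m_0,n'\rangle\le\langle m_0,n_0\rangle$ (a supporting hyperplane to $\Delta^\vee$ at $n_0$). Sliding $m'$ along this direction to $m:=m'+tm_0\in\partial\Delta$ for suitable $t\ge0$, one checks directly that
\[
  \psi^c(m)=\sup_{n'\in\Bspace}\bigl(\langle m',n'\rangle-\psi(n')+t\langle m_0,n'\rangle\bigr)
  \le\bigl(\langle m',n_0\rangle-\psi(n_0)\bigr)+t\langle m_0,n_0\rangle
  =\langle m,n_0\rangle-\psi(n_0),
\]
using the subgradient inequality for the first bracket and the choice of $m_0$ for the second. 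The point is that $m$ is \emph{not} a subgradient of $\psi$ at $n_0$, but the restricted supremum defining $\psi^c$ makes the translation by $tm_0$ harmless.
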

\begin{proof}
  It suffices to prove $\psi^{cc}\ge\psi$ on $N_\R\setminus(\Delta^\vee)^\circ$. Indeed, the inequality  $\psi^{cc}\le\psi$ on $\Bspace$ is formal, see Lemma~\ref{lem:2c}.

  Pick any $n_0\in N_{\R}\setminus (\Delta^{\ch})^\circ$. To see that $\psi(n_0) \leq \psi^{cc}(n_0)$, it will suffice to find an $m\in \Aspace$ such that:
\[
\psi(n_0) \leq \langle m, n_0\rangle - \psi^c(m),
\]
since the right-hand side is dominated by $\psi^{cc}(n_0)$. 
Let $m'$ be a subgradient of $\psi$ at $n_0$, i.e.
\[
\psi(n) \geq \langle m', n - n_0\rangle + \psi(n_0),
\]
for all $n\in N_{\R}$. Since $\psi\in\mathcal{Q}_+$, the subgradients for $\psi$ satisfy $\partial\psi (N_{\R}) \subseteq \Delta$ (see e.g.~\cite[Lemma 2.5]{BB13}). Also, as $n_0$ is not in the interior of $\Delta^{\ch}$, we can find a hyperplane, represented by $m_0\in M_{\R}$, such that $\sup_{n\in\Bspace} \langle m_0, n\rangle \leq \langle m_0, n_0\rangle$.

\corr{
Now let $\lambda \geq 0$ be such that $m := m' + \lambda m_0\in \Aspace$. Then we have that:
\begin{multline*}
  \psi^{c}(m)
  = \sup_{n\in\Bspace} \langle m, n\rangle - \psi(n)
\leq -\psi(n_0) + \sup_{n\in\Bspace} \langle m' + \lambda m_0, n\rangle + \langle m', n_0 - n\rangle\\
= \langle m', n_0\rangle -\psi(n_0) + \lambda\sup_{n\in\Bspace} \langle m_0, n\rangle
\leq \langle m, n_0\rangle - \psi(n_0),
\end{multline*}
and we are done.
}
\end{proof}
\begin{cor}\label{cor:PQconvex}
  The spaces $\cP$ and $\cQ$ are convex.
\end{cor}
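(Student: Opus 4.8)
\textbf{Proof proposal for Corollary~\ref{cor:PQconvex}.}
The plan is to identify $\cQ$ (and symmetrically $\cP$) with the restriction to $\Bspace$ of the genuinely convex set $\cQ_+$, and then observe that convexity is preserved under the (affine, indeed linear) restriction map. Concretely, I would first argue that
\[
\cQ=\{\psi|_{\Bspace}\ :\ \psi\in\cQ_+\}.
\]
The inclusion ``$\subseteq$'' is already recorded in the discussion preceding Proposition~\ref{prop:extend}: for $u\in L^\infty(\Aspace)$, the function $n\mapsto\sup_{m\in\Aspace}(\langle m,n\rangle-u(m))$ on $N_\R$ is a supremum of affine functions, hence convex, and since the vertices $m_j$ of $\Delta$ lie in $\Aspace$ and $u$ is bounded it equals $\max_j m_j+O(1)$; thus it lies in $\cQ_+$, and its restriction to $\Bspace$ is $u^c$. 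For ``$\supseteq$'', given $\psi\in\cQ_+$, Proposition~\ref{prop:extend} gives $\psi^{cc}=\psi$ on $\Bspace$; since $\psi^c\in L^\infty(\Aspace)$ is bounded (because $|\langle m,n\rangle|\le d+1$ and $\psi$ is continuous on the compact set $\Bspace$), the function $\psi|_{\Bspace}=(\psi^c)^c$ is the $c$-transform of a bounded function on $\Aspace$, hence lies in $\cQ$.

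With this identification in hand, convexity is immediate: if $\psi_0,\psi_1\in\cQ_+$ and $t\in[0,1]$, then $t\psi_0+(1-t)\psi_1$ is again a convex function on $N_\R$ with asymptotics $\max_j m_j+O(1)$, so it belongs to $\cQ_+$; restricting to $\Bspace$ and using that restriction commutes with taking convex combinations shows $t\,\psi_0|_{\Bspace}+(1-t)\,\psi_1|_{\Bspace}\in\cQ$. The argument for $\cP$ is identical, using the ``$\cP_+$'' half of Proposition~\ref{prop:extend} and the fact that the $c$-transform $L^\infty(\Bspace)\to\cP_+$ lands in $\cP_+$.

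I do not expect a genuine obstacle here; the content is entirely contained in Proposition~\ref{prop:extend}. The only point requiring a little care is the bookkeeping in the two inclusions above — in particular checking that the extension produced by the $c$-transform really has the right growth to lie in $\cQ_+$ (respectively $\cP_+$), and that $\psi^c$ (respectively $\phi^c$) is bounded so that $\psi|_{\Bspace}$ (respectively $\phi|_{\Aspace}$) is a legitimate $c$-transform of a function in $L^\infty$. Alternatively, one can bypass the explicit identification and argue directly: for $\psi_0=u_0^c$, $\psi_1=u_1^c$ in $\cQ$ one writes the convex combination as a restriction of $t\,\tilde u_0^{\,c}+(1-t)\,\tilde u_1^{\,c}$ with $\tilde u_i\in\cQ_+$ extending $u_i^c$, then applies Proposition~\ref{prop:extend} to conclude the restriction is again in $\cQ$; this is the same proof repackaged.
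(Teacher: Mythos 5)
Your proof is correct and is essentially the paper's argument: the paper notes (just before Proposition~\ref{prop:extend}) that $\cQ$ is the image of $\cQ_+$ under restriction to $\Bspace$, with surjectivity supplied by Proposition~\ref{prop:extend}, and then deduces convexity of $\cQ$ from the (obvious) convexity of $\cQ_+$. You have merely filled in the bookkeeping that the paper leaves implicit.
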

\begin{proof}
This is clear since the spaces $\cP_+$ and $\cQ_+$ are convex.
\end{proof}
\begin{rmk}
  Unlike the plurisubharmonic case, functions in $\cP$ (resp.\ $\cQ$) admit a canonical extension to $M_\R$ (resp.\ $N_\R$), namely the supremum of all such extensions. We omit the proof.
\end{rmk}

%

%

%
%
\subsection{Convexity in coordinate charts}
Following Li~\cite{LiFermat}, we show that the functions in $\mathcal{P}$ and $\mathcal{Q}$ are convex in the coordinate charts defined in~\S\ref{sec:SIAS}, up to adding a piecewise linear term.
\begin{lem}\label{lem:cstar}
\cite[Proposition~3.26]{LiFermat}
If $\psi\in\cQ$, then for any $i\ne j$, the function 
\[
\psi_{i,j} := (\psi-m_j)\circ q_{i,j}
\]
is convex on $q_{i,j}^{-1}(\Star(n_i))$. As a consequence, $\psi\circ q_{i,j}$ is convex on $q_{i,j}^{-1}(\tau_k)$ for any $k\ne i$.

Similarly, if $\phi\in\cP$, then for any $i\ne j$, the function 
\[
\phi_{i,j} := (\phi-n_j)\circ p_{i,j}
\]
is convex on $p_{i,j}^{-1}(\Star(m_i))$, and $\phi\circ p_{i,j}$ is convex on $p_{i,j}^{-1}(\sigma_k)$ for any $k\ne i$.
\end{lem}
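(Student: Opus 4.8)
The plan is to reduce the statement to a direct computation using the pairing formula in coordinate charts, Proposition~\ref{prop:chart_pair}. Recall that $\psi\in\cQ$ means $\psi=u^c$ for some bounded $u$ on $\Aspace$; by Remark~\ref{rmk:partialc} (or by definition of $\cQ$) we may write, for $n\in N_\R$ in the relevant region,
\[
  \psi(n)=\sup_{m\in\Aspace}\bigl(\langle m,n\rangle-u(m)\bigr).
\]
The key point is that $q_{i,j}$ maps $\tilde T=q_{i,j}^{-1}(\Star(n_i))$ onto $\Star(n_i)$, so for $t\in\tilde T$ we want to understand $(\psi-m_j)(q_{i,j}(t))=\psi(q_{i,j}(t))-\langle m_j,q_{i,j}(t)\rangle$. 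I would first restrict the supremum defining $\psi$ to those $m\in\Aspace$ lying in $\sigma_i$: by Lemma~\ref{lem:pairsym2}(ii), for each $n\in\Star(n_i)$ (in fact each $n\in\Bspace$, but the argument localizes to $\Star(n_i)$ via $G$-translates — here one must be slightly careful and use the full c-convex representation rather than a naive restriction) the maximizing $m$, after applying a suitable $g\in G$, lies in $\sigma_i$; since $\psi$ itself need not be symmetric this requires writing $\psi=u^c$ and noting the extension properties, so let me instead argue directly.

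Concretely: for $t\in\tilde T$ write $n=q_{i,j}(t)\in\Star(n_i)$. Then
\[
  \psi(n)-\langle m_j,n\rangle=\sup_{m\in\Aspace}\bigl(\langle m-m_j,n\rangle-u(m)\bigr).
\]
For $m\in\sigma_i\subset\Star(m_j)$ (using $i\ne j$), Proposition~\ref{prop:chart_pair}, equation~\eqref{equ:cp1}, gives $\langle m-m_j,n\rangle=\langle p_{j,i}^{-1}(m),q_{i,j}^{-1}(n)\rangle=\langle p_{j,i}^{-1}(m),t\rangle$, which is an affine — indeed linear — function of $t$ for each fixed $m$. For $m\in\Aspace\setminus\sigma_i$ the term $\langle m-m_j,n\rangle$ is still the restriction of a function that is affine on the chart $\tilde T$: here I would invoke that the charts $q_{i,j}$ are piecewise integral affine and that the global pairing $\langle\cdot,n\rangle$ is linear on $M_\R$, so on each maximal cell of $\tilde T$ the map $t\mapsto\langle m-m_j,q_{i,j}(t)\rangle$ agrees with a linear function; but to get a genuine supremum-of-affine statement one should restrict attention to the chart, and this is exactly why the clean conclusion "$\psi_{i,j}$ is convex on $q_{i,j}^{-1}(\Star(n_i))$" holds — following~\cite[Proposition~3.26]{LiFermat}, the supremum over $m\in\sigma_i$ already recovers $\psi-m_j$ on $\Star(n_i)$ (by the c-convexity of $\psi$ together with Lemma~\ref{lem:pairsym2}(ii), which identifies the maximizers), and that supremum is visibly a supremum of linear functions of $t$, hence convex.

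Having shown $\psi_{i,j}$ is convex on $q_{i,j}^{-1}(\Star(n_i))$, the consequence is immediate: for $k\ne i$, $q_{i,j}^{-1}(\tau_k)\subset q_{i,j}^{-1}(\Star(n_i))$, and on $\tau_k$ the function $m_j$ is affine (it is globally linear on $M_\R$, restricted to the integral affine face $\tau_k$), so $\psi\circ q_{i,j}=\psi_{i,j}+m_j\circ q_{i,j}$ is the sum of a convex and an affine function, hence convex on $q_{i,j}^{-1}(\tau_k)$. The statements for $\phi\in\cP$ follow by the symmetric argument, using~\eqref{equ:cp2} in place of~\eqref{equ:cp1} and interchanging the roles of $\Aspace,\Bspace$ and of $m,n$. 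The main obstacle is the first step: justifying that restricting the defining supremum of $\psi$ to $m\in\sigma_i$ does not decrease it on $\Star(n_i)$ — i.e.\ that the c-subgradient of a c-convex $\psi$ at a point of $\Star(n_i)$ can always be chosen in $\sigma_i$. This is where Lemma~\ref{lem:pairsym2}(ii) does the real work, and it must be combined with the extension property (Proposition~\ref{prop:extend}) to ensure the supremum is actually attained and the chart formula~\eqref{equ:cp1} applies at the maximizer.
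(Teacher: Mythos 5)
There is a genuine gap, and it sits exactly where you flagged it: the claim that for $n\in\Star(n_i)$ the defining supremum $\psi(n)=\sup_{m\in\Aspace}(\langle m,n\rangle-\psi^c(m))$ can be restricted to $m\in\sigma_i$ without loss. This is false for general $\psi\in\cQ$, and Lemma~\ref{lem:pairsym2}(ii) cannot rescue it: that lemma only identifies the maximizers of $g\mapsto\langle m,g(n)\rangle$ over the group $G$, which is useful precisely when $\psi$ and $\psi^c$ are $G$-invariant (this is how it is used in Lemma~\ref{lem:pooh}, which is stated only for $\cQ_{\sym}$). For a non-symmetric example, take $d=1$ and $\psi=m_0|_{\Bspace}\in\cQ$: a direct computation with~\eqref{equ:intno} shows that for every $m\in\sigma_0$ the unique maximizer of $\langle m-m_0,\cdot\rangle$ on $\Bspace$ is the vertex $n_0$, so $\partial^c\psi(n)\cap\sigma_0=\emptyset$ for all $n\in\Star(n_0)\setminus\{n_0\}$ and the restricted supremum is strictly smaller than $\psi$ there. (The examples in~\S\ref{sec:examples}, e.g.\ $\psi=m_i$, make the same point.) Note also that even for $\psi\in\cQ_{\sym}$ your reduction only works on $T_i^\circ$, where Lemma~\ref{lem:pooh} gives $\partial^c\psi(T_i^\circ)\subseteq\sigma_i$; on $\tau_j^\circ\subset\Star(n_i)$ the $c$-subgradient lands in $S_j$, not $\sigma_i$. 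What you have written is essentially the content of Remark~\ref{rmk:altcvx}, i.e.\ a proof of a strictly weaker statement (convexity on $q_{i,j}^{-1}(T_i^\circ)$ for symmetric $\psi$), not of Lemma~\ref{lem:cstar}.

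The paper's proof avoids the localization entirely. For $n\in\tau_j$ it takes an arbitrary $m\in\partial^c\psi(n)$, with no constraint on where $m$ lies in $\Aspace$, and shows that $s:=p_{j,i}^{-1}(m)$ (the globally \emph{linear} map~\eqref{equ:pijinv} applied to $m$) is a subgradient of $\psi_{i,j}$ at $t=q_{i,j}^{-1}(n)$. The point is that although the chart identity~\eqref{equ:cp1} only holds for $m\in\sigma_i$, one can write $m=m'+rm_i$ with $m'\in\sigma_i$ and $r\ge0$, and the discrepancy between $\langle s,t'-t\rangle$ and $\langle m-m_j,n'-n\rangle$ computes to $r(d+1)\langle m_j,n'-n\rangle$, which is $\le 0$ because $n\in\tau_j$ maximizes $m_j$ on $\Bspace$. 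This one-sided inequality is exactly what replaces your (false) equality of suprema and is why the lemma holds for all of $\cQ$, not just $\cQ_{\sym}$. Your concluding step (passing from $\psi_{i,j}$ to $\psi\circ q_{i,j}$ on $\tau_k$, and the dual statement for $\cP$) is fine once the main convexity claim is established.
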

In the terminology of~\cite{LiFermat}, the lemma says that the functions in $\cP$ and $\cQ$ are locally convex.
\begin{proof}
  We prove the statement about $\cQ$; functions in $\cP$ are handled in the same way. Thus pick $\psi\in\cQ$. 
  We shall in fact prove the following: if $n\in \tau_j\subset\Star(n_i)$ and $m\in \partial^c\psi(n)$, then $p_{j, i}^{-1}(m)$ is a subgradient for $\psi_{i,j}$ at $q_{i,j}^{-1}(n)$ (here we are thinking of $p_{j,i}^{-1}$ as a global map from $\Aspace$ to $\R^d$, and make no assumption on where $m$ is inside $\Aspace$). Accepting this, and noting that $\partial^c\psi(n)$ is non-empty for any $n\in \Bspace$ by compactness, it follows that $\psi_{i,j}$ is convex on $\Star(n_i)$. The proposition then follows by noting that $$\psi_{i,k} = \left(\psi_{i,j}+(m_j - m_{k})\circ q_{i,j}\right) \circ q_{i,j}^{-1}\circ q_{i,k},$$ 
  $(m_j - m_{k})\circ q_{i,j}$ is affine on $q_{i,j}^{-1}(\Star(n_i))$ for any $j,k \not= i$, and that the 
  maps $q_{i,j}^{-1}\circ q_{i,k}$ are linear on $q_{i,k}^{-1}(\Star(n_k))$.

First, from the definition of the $c$-subgradient, we have:
\[
\psi(n) = \langle m, n\rangle - \psi^{c}(m) \leq \langle m, n - n' \rangle + \psi(n')
\]
for all $n'\in \Star(n_i)$. \corr{With $y:=q_{i,j}^{-1}(n)$, $y':=q_{i,j}^{-1}(n')$, we have that
\[
  \psi_{i,j}(y')-\psi_{i,j}(y) \ge \langle m - m_j, n' - n\rangle,
\]
and it remains to estimate the right-hand side in terms of the coordinates.

We can write $m=m'+rm_i$, where $m'\in\sigma_i$ and $r\ge0$. Indeed, if $m=\sum_k\a_km_k$, then we can pick $m'=\sum_{k\ne i}(\a_k+\frac{\a_i}{d+1})m_k$ and $r=2\a_i$.
Now set $x:=p_{j,i}^{-1}(m)$, $x':=p_{j,i}^{-1}(m')$, and $x_i:=p_{j,i}^{-1}(m_i)$. 
By Proposition~\ref{prop:chart_pair}, we have
\[
  \langle x',y'-y\rangle=\langle m'-m_j,n'-n\rangle.
\]
On the other hand, a direct calculation as in the proof of Proposition~\ref{prop:chart_pair} yields
\[
  \langle x_i,y\rangle=\langle m_i,n\rangle + (d+1)\langle m_j,n\rangle
\quad\text{and}\quad
  \langle x_i,y'\rangle=\langle m_i,n'\rangle + (d+1)\langle m_j,n'\rangle.
\]
By linearity, $x=x'+rx_i$, and hence 
\begin{multline*}
  \langle x,y'-y\rangle
  =\langle m'-m_j,n'-n\rangle
  +r(\langle m_i,n'-n\rangle+(d+1)\langle m_j,n'-n\rangle)\\
  =\langle m-m_j,n'-n\rangle
  +r(d+1)\langle m_j,n'-n\rangle
  \le\langle m-m_j,n'-n\rangle,
\end{multline*}
where the inequality holds since $n\in\tau_j$ implies $\langle m_j,n\rangle=1\ge\langle m_j,n'\rangle$. Altogether, this yields
\[
  \psi_{i,j}(y')-\psi_{i,j}(y) \ge \langle x,y'-y\rangle,
\]
and completes the proof.
}
\end{proof}

%
%

\subsection{A principal $\R$-bundle}\label{sec:AffRb}
We can interpret the convexity statement in Lemma~\ref{lem:cstar} geometrically as follows. For $0\le j\le d+1$, set $Y_j:=N_\R$, and define a topological space $\Lambda$ by $\Lambda:=\coprod_jY_j\times\R/\!\sim$, \corr{where $(n,\lambda)\in Y_j\times\R$ and $(n',\lambda')\in Y_{j'}\times\R$ are equivalent iff $n=n'$ and $\lambda'-\lambda=\langle m_{j'}-m_j,n\rangle$.} The evident map $\pi\colon\Lambda\to N_\R$ gives $\Lambda$ the structure of a principal $\R$-bundle.
\footnote{One can also view $\Lambda$ as the skeleton of the analytification of the line bundle $\cO(d+2)$ over $\P^{d+1}$, restricted to $N_\R\subset\P^{d+1,\an}$, see~\cite[\S2.1]{trivval}.}

Let $Z\subseteq N_{\R}$. A continuous \emph{section} of $\Lambda$ over $Z$ is a continuous function $s\colon Z\to\Lambda$ such that $\pi\circ s=\id$. By construction, $\Lambda$ is trivial, and comes equipped with isomorphisms $\theta_j\colon\Lambda\simto Y_j\times\R$. These give rise to a canonical reference section $s_{\refe}$ over $N_\R$, defined by $\theta_j(s_{\refe}(n))=(n,\langle m_j,n\rangle)$. For any continuous section $s$ over $Z$, $s-s_{\refe}$ is a continuous function on $Z$. We set $s_j:=s_{\refe}+m_j$.

A continuous {\em metric} on $\Lambda$ over $Z$ can be viewed as a continuous function $\Psi:\pi^{-1}(Z)\rightarrow \R$ which respects the $\R$-action, i.e. $\Psi(s + r) = \Psi(s) + r$, for $s\in\pi^{-1}(Z)$, $r\in\R$. By checking its representations in coordinate charts, $\Psi$ is naturally a section of the ``dual" bundle $-\Lambda$; this is defined in exactly the same way as $\Lambda$, \corr{except we require $\lambda' - \lambda = \langle m_j - m_{j'}, n\rangle$.} It follows that $- s_{\refe}$ is a canonical reference metric on $\Lambda$.

The restriction of $\Lambda$ (and $-\Lambda$) to the integral affine manifold $\Bspace_0\subset N_\R$ can be equipped with the structure of an integral affine $\R$-bundle in the sense of~\cite{HO19}. Namely, we declare that, for any $i$, a continuous section $s$ of $\Lambda$ over $\tau_i^\circ$ (resp.\ $T_i^\circ$) is integral affine iff the function $s-s_j$ on $\tau_i^\circ$ (resp.\ $T_i^\circ$) is integral affine for some (equivalently, any) $j\ne i$.

Lemma~\ref{lem:cstar} now implies that for any $\psi\in\mathcal{Q}$, the metric $\Psi = \psi - s_{\refe}$ on $\Lambda$ is convex over $B_0$, since it is convex in any affine trivializations (equivalently, $\Psi$ is a convex section of $-\Lambda$~\cite{HO19}).

%
%
%
%
\section{Symmetric c-convex functions and their Monge--Amp\`ere measures}\label{sec:sym}
The c-transform is modeled on the Legendre transform between convex functions on a vector space and its dual, and as shown in Lemma~\ref{lem:cstar}, leads to a seemingly satisfactory notion of ``local convexity" on $\Aspace$ and $\Bspace$. However, if one attempts to generalize Alexandrov's definition of the weak Monge--Amp\`{e}re measure to this setting, some interesting problems manifest.

As suggested by Li~\cite{LiFermat}, these issues disappear if we take into account the action of the permutation group $G=S_{d+2}$, and restrict ourselves to symmetric data.

%
%
%
%
\subsection{Controlling the c-gradients}
We denote by $\cP_{\sym}\subset\cP$ and $\cQ_{\sym}$ the set of symmetric functions, that is, $G$-invariant functions. These are closed subsets of $\cP$ and $\cQ$, respectively, so the quotients $\cP_{\sym}/\R$ and $\cQ_{\sym}/\R$ are compact by Corollary~\ref{cor:compact}.
The c-transform is equivariant for the $G$-action, and restrict to isometric bijections between $\cP_{\sym}$ and $\cQ_{\sym}$.

As we now show, symmetry places a number of strong restrictions on the possible $c$-subgradients a function could have.
\begin{lem}\label{lem:pooh}
For any $\psi\in\cQ_{\sym}$, we have
$\partial^c\psi(T_i^\circ)\subseteq \sigma_i$ and $\partial^c\psi(\tau_i^{\circ})\subseteq S_i$. The analogous inclusions hold for $\phi\in\cP_{\sym}$.
\end{lem}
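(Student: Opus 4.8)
The plan is to use the symmetry of $\psi$ together with Lemma~\ref{lem:pairsym2} to transfer a statement about $c$-subgradients into a statement about ordinary subgradients, and then invoke Lemma~\ref{lem:cstar}. Fix $\psi\in\cQ_{\sym}$ and $n\in T_i^\circ$; I want to show $\partial^c\psi(n)\subseteq\sigma_i$. Take any $m\in\partial^c\psi(n)$, so that $\psi(n)+\psi^c(m)=\langle m,n\rangle$. The key observation is that by $G$-invariance of both $\psi$ and $\psi^c$ (the $c$-transform is $G$-equivariant, as noted at the start of~\S\ref{sec:sym}), for every $g\in G$ we have $\psi(n)+\psi^c(g^{-1}m)=\psi(g n)+\psi^c(g^{-1}m)$; but more to the point, $\langle m,g(n)\rangle\le\psi^c(m)+\psi(g(n))=\psi^c(m)+\psi(n)=\langle m,n\rangle$ for all $g$, where the inequality is the defining inequality of the $c$-transform and the middle equality uses symmetry of $\psi$. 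Hence $g=\id$ realizes the maximum of $g\mapsto\langle m,g(n)\rangle$, i.e.\ $\id\in G(m,n)$ in the notation of Lemma~\ref{lem:pairsym2}.

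Now apply Lemma~\ref{lem:pairsym2}(ii'): since $n\in T_i^\circ$ and $\id\in G(m,n)$, the conclusion is that $\id(m)=m\in\sigma_i$. This proves $\partial^c\psi(T_i^\circ)\subseteq\sigma_i$. For the second inclusion, take $n\in\tau_i^\circ$ and $m\in\partial^c\psi(n)$; the same argument shows $\id\in G(m,n)$, and Lemma~\ref{lem:pairsym2}(ii') (the ``resp.'' clause) gives $m\in S_i$, so $\partial^c\psi(\tau_i^\circ)\subseteq S_i$. The statement for $\phi\in\cP_{\sym}$ follows by the symmetric argument, using parts~(i) and~(i') of Lemma~\ref{lem:pairsym2} instead; alternatively, recall that $\partial^c\psi$ and $\partial^c\psi^c$ are inverse multi-valued maps and $\psi^c\in\cP_{\sym}$, so the two statements are in fact equivalent.

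\textbf{Main obstacle.} The only real content is the passage from $m\in\partial^c\psi(n)$ to $\id\in G(m,n)$, and this is genuinely where symmetry is used: for a non-symmetric $\psi$ there is no reason the identity should maximize $g\mapsto\langle m,g(n)\rangle$, and indeed the whole ``local convexity'' picture breaks down without it (cf.\ the remarks preceding~\S\ref{sec:sym} and Example~\ref{exam:non_diff}). I should double-check the direction of the inequalities in the $c$-transform and make sure the pairing convention matches, so that ``$\langle m,g(n)\rangle$ maximal at $g=\id$'' is the correct hypothesis of Lemma~\ref{lem:pairsym2} rather than its opposite; this is a routine sign bookkeeping but worth stating carefully. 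No further estimates are needed—Lemma~\ref{lem:pairsym2} does all the geometric work once the reduction is in place.
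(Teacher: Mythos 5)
Your proof is correct and is essentially the paper's argument: symmetry of $\psi$ forces $\langle m,n\rangle=\max_{g\in G}\langle m,g(n)\rangle$ (equivalently $\max_g\langle g^{-1}(m),n\rangle$) for $m\in\partial^c\psi(n)$, i.e.\ $\id\in G(m,n)$, and Lemma~\ref{lem:pairsym2}(ii') then gives the inclusions. The paper states this in one line; you have simply spelled out the same reduction (the passing mention of Lemma~\ref{lem:cstar} in your plan is not actually used and can be dropped).
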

\begin{center}
\begin{figure}[H]
\begin{tikzpicture} [scale=0.6]
\coordinate (4) at (0,-4);
\coordinate (3) at (3,-2);
\coordinate (1) at (-3,0);
\coordinate (5) at (2,2);
\filldraw (3) circle (1pt);
\filldraw (4) circle (1pt);
\filldraw (5) circle (1pt);
\filldraw (1) circle (1pt);
\node[right] at (3) {$n_3$};
\node[below] at (4) {$n_2$};
\node[below] at (0,-5) {$\tau_3^\circ$};
\node[above] at (5) {$n_0$};
\node[left] at (1) {$n_1$};
\fill[yellow, fill opacity=0.2] (1) -- (5) -- (4);
\draw (3)--(4)--(1)--(5)--(3);
\draw[dashed] (5)--(4);
\draw (1)--(3);
\end{tikzpicture}
\quad \quad\begin{tikzpicture}[scale=0.6]
\coordinate (4) at (0,4);
\coordinate (3) at (3,2);
\coordinate (1) at (-3,0);
\coordinate (5) at (2,-2);
\coordinate (03) at (-0.5,-1);
\coordinate (13) at (0,1);
\coordinate (013) at (2/3,0);
\coordinate (023) at (-1/3,2/3);
\coordinate (123) at (0,2);
\coordinate(23) at (-3/2,2);
\filldraw (3) circle (1pt);
\filldraw (4) circle (1pt);
\filldraw (5) circle (1pt);
\filldraw (1) circle (1pt);
\node[right] at (3) {$m_1$};
\node[above] at (4) {$m_2$};
\node[below] at (5) {$m_0$};
\node[left] at (1) {$m_3$};
\node[below] at (0,-3) {$S_3$ (for $d=2$)};
\draw(03)--(013)--(13);
\draw (03)--(023)--(23);
\draw (13)--(123)--(23);
\fill[teal, fill opacity=0.15] (1) -- (03) -- (023)--(23);
\fill[cyan, fill opacity=0.3] (1) -- (23) -- (123)--(13);
\fill[cyan, fill opacity=0.2] (1) -- (03) -- (013)--(13);
\draw (3)--(4)--(1)--(5)--(3);
\draw[dashed] (1)--(3);
\draw (1)--(4)--(5);
\end{tikzpicture}
\end{figure}
\end{center}
\begin{proof}
  By symmetry of $\psi$ and $\psi^c$, $m\in(\partial^c\psi)(n)$ implies $\langle m, n\rangle = \max_{g\in G} \langle g^{-1}(m), n\rangle$. The result now follows from Lemma~\ref{lem:pairsym2}.
\end{proof}
Since $\partial^c\psi$ and $\partial^c\psi^c$ are inverses, applying Lemma~\ref{lem:pooh} to $\psi^c$ gives:
\begin{cor}
For any $\psi\in\cQ_{\sym}$, we have $S_i^{\circ} \subseteq \partial^c\psi(\tau_i)$ and $\sigma_i^{\circ} \subseteq\partial^c\psi(T_i)$, with analogous results for $\phi\in\cP_{\sym}$.
\end{cor}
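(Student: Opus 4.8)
The corollary states that for $\psi\in\cQ_{\sym}$ one has $S_i^{\circ}\subseteq\partial^c\psi(\tau_i)$ and $\sigma_i^{\circ}\subseteq\partial^c\psi(T_i)$, with analogous statements for $\phi\in\cP_{\sym}$. The plan is to deduce this formally from Lemma~\ref{lem:pooh} by exploiting the fact that $\partial^c\psi$ and $\partial^c\psi^c$ are mutually inverse multi-valued maps, a fact already recorded in the discussion right before the Example at the end of~\S\ref{sec:cconvex}. The only point requiring a moment's thought is that the c-transform preserves symmetry: if $\psi\in\cQ_{\sym}$, then $\psi^c\in\cP_{\sym}$, which is exactly the equivariance statement noted at the start of~\S\ref{sec:sym} (the $G$-action commutes with the pairing via $\langle g(m),g(n)\rangle=\langle m,n\rangle$, hence $\psi^c\circ g=(\psi\circ g)^c=\psi^c$).

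First I would apply Lemma~\ref{lem:pooh} to the function $\psi^c\in\cP_{\sym}$ in place of $\psi$. The lemma, stated for $\phi\in\cP_{\sym}$, gives $\partial^c\phi(T_i^\circ)\subseteq\sigma_i$ and $\partial^c\phi(\tau_i^\circ)\subseteq S_i$ for all $i$ (the "analogous inclusions'' clause). Taking $\phi=\psi^c$ yields
\[
  \partial^c\psi^c(T_i^\circ)\subseteq\sigma_i
  \qquad\text{and}\qquad
  \partial^c\psi^c(\tau_i^\circ)\subseteq S_i .
\]
Second, I would invert these inclusions. Recall $m\in\partial^c\psi^c(n)$ iff $n\in\partial^c\psi(m)$ (inverse multi-valued maps, using $\psi^{cc}=\psi$ on $\Bspace$ from Proposition~\ref{prop:extend}). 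So the inclusion $\partial^c\psi^c(\tau_i^\circ)\subseteq S_i$ says: whenever $n\in\tau_i^\circ$ and $m\in\partial^c\psi^c(n)$, then $m\in S_i$; equivalently, whenever $m\in\partial^c\psi^c(n)$ and $n\in\tau_i^\circ$, one has $m\in S_i$. Reading this through the inverse: if $m\notin S_i$, then no $n\in\tau_i^\circ$ lies in $\partial^c\psi(m)$. Contrapositively, for every $m$ with $n\in\partial^c\psi(m)$ for some $n\in\tau_i^\circ$ we get $m\in S_i$ — but what I actually want is the set-theoretic statement $S_i^\circ\subseteq\partial^c\psi(\tau_i)$, so let me phrase the inversion directly: for $m\in S_i^\circ$, I must produce $n\in\tau_i$ with $m\in\partial^c\psi(n)$. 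Pick any $n\in\partial^c\psi^c(m)$ (nonempty by continuity/compactness, as noted after the definition of c-subgradient); then $m\in\partial^c\psi(n)$. By the inclusion $\partial^c\psi^c(S_i^\circ)\subseteq\tau_i$ — which is precisely the first inclusion above applied with the roles of $\Aspace,\Bspace$ swapped, i.e. Lemma~\ref{lem:pooh} for $\psi^c\in\cP_{\sym}$ giving $\partial^c\psi^c(S_i^\circ)\subseteq\tau_i$ — we get $n\in\tau_i$. Hence $m\in\partial^c\psi(\tau_i)$, proving $S_i^\circ\subseteq\partial^c\psi(\tau_i)$. The second inclusion $\sigma_i^\circ\subseteq\partial^c\psi(T_i)$ is obtained identically, now using $\partial^c\psi^c(\sigma_i^\circ)\subseteq T_i$ from the $\cP$-version of Lemma~\ref{lem:pooh}.

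The statements for $\phi\in\cP_{\sym}$ follow by the same argument with $\Aspace$ and $\Bspace$ interchanged, or simply by applying the $\cQ$-case to $\psi:=\phi^c\in\cQ_{\sym}$ and using $\phi=\psi^c$. There is essentially no obstacle here: the corollary is a pure formal inversion, and the one substantive input — the containments of c-subgradient images into the stars $S_i,\sigma_i,T_i,\tau_i$ — is Lemma~\ref{lem:pairsym2} packaged as Lemma~\ref{lem:pooh}. The only things to be careful about are (a) recording that $\psi^c$ is symmetric when $\psi$ is, and (b) keeping straight which of the four inclusions in the ($\cP$- and $\cQ$-)versions of Lemma~\ref{lem:pooh} is being inverted — the one needed is the inclusion $\partial^c\psi^c(\sigma_i^\circ)\subseteq T_i$ and $\partial^c\psi^c(S_i^\circ)\subseteq\tau_i$, i.e. the "$\cP$-side'' with open stars $S_i^\circ,\sigma_i^\circ$ in the domain.
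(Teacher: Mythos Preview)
Your approach is exactly the paper's: apply Lemma~\ref{lem:pooh} to $\psi^c\in\cP_{\sym}$ and invert using that $\partial^c\psi$ and $\partial^c\psi^c$ are mutual inverses. One slip to fix: your first displayed pair of inclusions is mis-typed --- for $\phi\in\cP_{\sym}$ the map $\partial^c\phi$ goes $\Aspace\to\Bspace$, so the ``analogous inclusions'' read $\partial^c\phi(S_i^\circ)\subseteq\tau_i$ and $\partial^c\phi(\sigma_i^\circ)\subseteq T_i$ (you correctly use these in the next paragraph, so the argument itself is sound).
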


Next we look at the subgradients in charts. Recall that the function $\psi_{i,j} := (\psi - m_j)\circ q_{i,j}$ is convex on $q_{i,j}^{-1}(\Star(n_i))$, see Lemma~\ref{lem:pooh}.

\begin{rmk}\label{rmk:altcvx}
  Lemma~\ref{lem:pooh} gives an alternative proof a weaker version of Lemma~\ref{lem:cstar}, namely, that $\psi_{i,j}$ is convex on $q_{i,j}^{-1}(T_i^\circ)$ for every $\psi\in\cQ_{\sym}$. Indeed, the lemma implies that $\psi|_{T_i^\circ}$ is a supremum of functions of the form $\sum_{k\ne i}\theta_km_k+c$, with $c\in\R$ $\theta_k\ge0$, and $\sum_k\theta_k=1$. For each $j,k\ne i$, the function $(m_k+c-m_j)\circ q_{i,j}$ is affine, and this implies that $\psi_{i,j}$ is convex. In fact, even when $k=i$, the function $(m_k+c-m_j)\circ q_{i,j}$ is convex (although not affine); hence a similar argument can be used to prove the full statement of Lemma~\ref{lem:cstar}. 
\end{rmk}

\begin{lem}\label{lem:stuff}
Suppose that $\psi\in \mathcal{Q}_{\sym}$ and $i\ne j$. Then $p_{j,i}^{-1}$ gives a bijection of $(\partial^c\psi)(n)$ onto $\partial\psi_{i,j}(q_{i,j}^{-1}(n))$ for any $n\in T_i^{\circ}$. The same result holds for any $n\in \tau_j^{\circ}$. Moreover, the analogous results hold for $\cP_{\sym}$.
\end{lem}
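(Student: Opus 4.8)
The plan is to build on the key computation extracted in the proof of Lemma~\ref{lem:cstar}: if $n \in \tau_j \subseteq \Star(n_i)$ and $m \in (\partial^c\psi)(n)$, then $p_{j,i}^{-1}(m)$ is a subgradient for $\psi_{i,j}$ at $q_{i,j}^{-1}(n)$. This already gives the inclusion $p_{j,i}^{-1}\big((\partial^c\psi)(n)\big) \subseteq \partial\psi_{i,j}(q_{i,j}^{-1}(n))$ for $n \in \tau_j^\circ$; I would first check that the same argument goes through verbatim for $n \in T_i^\circ$, using Lemma~\ref{lem:pooh} to locate $(\partial^c\psi)(n) \subseteq \sigma_i$ (so that the decomposition $m = m' + r m_i$ with $m' \in \sigma_i$ used there is available and, in fact, $r$ can be taken to be $0$ — on $T_i^\circ$ the estimate simplifies). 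Since $p_{j,i}^{-1}$ is injective on $\Aspace$, this inclusion is automatically a bijection onto its image, so the whole content of the lemma is the \emph{surjectivity}: every subgradient of $\psi_{i,j}$ at $t := q_{i,j}^{-1}(n)$ arises this way.

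For surjectivity I would argue as follows. Fix $n \in T_i^\circ$ (the case $n \in \tau_j^\circ$ is identical) and let $s \in \partial\psi_{i,j}(t)$, where $t = q_{i,j}^{-1}(n)$. I want to produce $m \in (\partial^c\psi)(n)$ with $p_{j,i}^{-1}(m) = s$. The natural candidate is $m := p_{j,i}(s)$, provided $s$ lies in the domain $p_{j,i}^{-1}(\Star(m_j))$ of $p_{j,i}$; but since $\sigma_i \subseteq \Star(m_j)$ and we expect the relevant subgradients to land in $p_{j,i}^{-1}(\sigma_i)$, I would first verify $s \in p_{j,i}^{-1}(\sigma_i)$. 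This should follow by combining the inclusion $\partial\psi_{i,j}(t) \subseteq p_{j,i}^{-1}(\partial^c\psi(T_i^\circ)) \subseteq p_{j,i}^{-1}(\sigma_i)$ — once we know $\psi_{i,j}$ is convex on $q_{i,j}^{-1}(T_i^\circ)$ (Lemma~\ref{lem:cstar}, or Remark~\ref{rmk:altcvx}), its subgradient image over a point is the limit/convex hull of nearby subgradients, and the forward inclusion already established shows all of these lie in $p_{j,i}^{-1}(\sigma_i)$, a convex set. Granting $m = p_{j,i}(s) \in \sigma_i$, I would then run the chart-pairing identity of Proposition~\ref{prop:chart_pair} \emph{in reverse}: the subgradient inequality $\psi_{i,j}(t') - \psi_{i,j}(t) \ge \langle s, t' - t\rangle$ for all $t' \in q_{i,j}^{-1}(\Star(n_i))$ translates, via $\langle s, t'-t\rangle = \langle m - m_j, n' - n\rangle$ (Proposition~\ref{prop:chart_pair}, using $m \in \sigma_i$), into $\psi(n') - \psi(n) \ge \langle m, n' - n\rangle$ for all $n' \in \Star(n_i)$. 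Combined with the restriction $\partial^c\psi(n) \subseteq \sigma_i \subseteq \Star(m_j)$ and the definition of the $c$-subgradient via $\psi^c$, this inequality (after extending from $\Star(n_i)$ to all of $\Bspace$, which is where symmetry and Lemma~\ref{lem:pairsym2} re-enter to control the behavior outside $\Star(n_i)$) gives precisely $m \in (\partial^c\psi)(n)$.

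The main obstacle I anticipate is the last translation step: going from "$\psi(n') - \psi(n) \ge \langle m, n'-n\rangle$ for $n'$ in the star $\Star(n_i)$ only" to the global statement "$m \in \partial^c\psi(n)$", which requires the inequality for all $n' \in \Bspace$ (equivalently, $\psi(n) + \psi^c(m) = \langle m, n\rangle$). Here one cannot just extend convexity naively, because $\psi$ is only locally convex on $\Bspace$; instead I would use that $m \in \sigma_i$ is symmetric-extremal in the sense of Lemma~\ref{lem:pairsym2}(ii'), so that for $n'' \notin \Star(n_i)$ one has $n'' \in \tau_k$ for some $k \ne i$ with $g(n'') \in \Star(n_i)$ for a suitable $g \in G$ fixing the relevant pairing, reducing the estimate at $n''$ to an estimate inside the star where the local convexity applies. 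Making this reduction precise — tracking which transposition to apply and checking the pairing inequality does not degrade — is the delicate point; everything else is a matter of carefully inverting the coordinate identities already proved.
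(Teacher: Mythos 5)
Your argument is correct, and its engine is the same as the paper's: symmetry together with Lemma~\ref{lem:pairsym2} to show that for $m\in\sigma_i$ the supremum defining $\psi^c(m)$ is attained on $T_i\subseteq\Star(n_i)$, and Proposition~\ref{prop:chart_pair} to convert the resulting inequality into the Euclidean subgradient inequality for $\psi_{i,j}$. The organizational difference is that the paper runs this as a single chain of equivalences --- for $m\in\sigma_i$ it shows directly that $m\in(\partial^c\psi)(n)$ iff $p_{j,i}^{-1}(m)\in\partial\psi_{i,j}(t)$ --- so both inclusions come at once, whereas you split into a forward inclusion (via the computation from Lemma~\ref{lem:cstar}) plus a separate surjectivity step. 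That split forces you to prove $\partial\psi_{i,j}(t)\subseteq p_{j,i}^{-1}(\sigma_i)$, which you do correctly by writing the subgradient as the closed convex hull of limits of gradients at nearby differentiability points and using compactness and convexity of $p_{j,i}^{-1}(\sigma_i)$; this containment is also needed for the paper's ``bijection onto'' claim but is left implicit there, so your treatment is if anything more complete. The ``delicate point'' you flag at the end is resolved exactly as you propose: for $m\in\sigma_i$ and any $n''\in\Bspace$, Lemma~\ref{lem:pairsym2}(i) gives $g\in G(m,n'')$ with $g(n'')\in T_i$, and symmetry of $\psi$ then yields $\sup_{\Bspace}(\langle m,\cdot\rangle-\psi)=\sup_{\Star(n_i)}(\langle m,\cdot\rangle-\psi)$, which is precisely the paper's mechanism. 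Two small cautions: $p_{j,i}^{-1}$ is a linear map from the $(d+1)$-dimensional space $M_\R$ to $\R^d$, so it is not injective on all of $\Aspace$ --- only on $\Star(m_j)\supseteq\sigma_i$, which is all you use; and the case $n\in\tau_j^\circ$ is not quite ``identical'', since there $(\partial^c\psi)(n)\subseteq S_j$, the exact chart identity holds only for $n'\in\tau_j$ (the second identity of Proposition~\ref{prop:chart_pair}, with roles of the indices adjusted), and one must then invoke the standard fact that for a convex function the subgradient computed over a convex neighborhood agrees with the subgradient over the full domain $q_{i,j}^{-1}(\Star(n_i))$.
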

\begin{proof}
  First suppose $n\in T_i^\circ$. By Lemma~\ref{lem:pooh}, we have $(\partial^c\psi)(n)\subset\sigma_i$. Lemma~\ref{lem:pairsym2} and symmetry of $\psi$ show that, for $m\in\sigma_i$, we have:
  \[
  \psi^c(m) = \sup_{n\in T_i} \langle m, n\rangle - \psi(n) =  \sup_{n\in \mathrm{Star}(n_i)} \langle m, n\rangle - \psi(n).
  \]
  Thus, $m\in (\partial^c\psi)(n)$ iff
  \[
    \langle m,n\rangle-\psi(n)
    \ge\langle m,n'\rangle-\psi(n')
  \]
  for all $n'\in \mathrm{Star}(n_i)$. \corr{Writing $m=p_{j,i}(x)$ and $n=q_{i,j}(y)$, Proposition~\ref{prop:chart_pair} implies that the above inequality is equivalent to 
  \[
    \langle x,y\rangle-\psi_{i,j}(x)
    \ge\langle x,y'\rangle-\psi_{i,j}(y')
  \]
  for all $y'\in q_{i,j}^{-1}(\Star(n_i))$, which amounts to $x\in\partial\psi_{i,j}(y)$.}
  The case when $n\in\tau_j^\circ$ is proved in the same way, using Lemma~\ref{lem:pooh}, and the proof for functions in $\cP_{\sym}$ is completely analogous.
\end{proof}

Lemma~\ref{lem:stuff} allows us to apply many standard results for convex functions to $c$-convex functions.
For example, we have:
\begin{lem}\label{lem:cgrad3}
  If $\phi\in\cP_{\sym}$, then the following properties hold:
  \begin{itemize}
  \item[(i)]
    the c-subgradient $(\partial^c\phi)(m)$ is a singleton for almost every $m\in\Aspace$ (i.e.\ $\phi$ has a $c$-gradient a.e.);
  \item[(ii)]
    the a.e.\ defined function $(\partial^c\phi)\colon\Aspace\to\Bspace$ is measurable, and the set:
\[
\{n\in (\partial^c\phi)(m)\cap (\partial^c\phi)(m')\ |\ m,m'\in\Aspace_0,\ m \not = m'\}
\]
has Lebesgue measure 0.
  \end{itemize}
  Similar results hold for $\psi\in\cQ_{\sym}$.
\end{lem}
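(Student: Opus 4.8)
The plan is to reduce everything to the corresponding classical facts about convex functions on $\R^d$ via the coordinate charts of~\S\ref{sec:SIAS}, using Lemma~\ref{lem:stuff} as the translation device. I will prove the statements for $\phi\in\cP_{\sym}$; the case of $\psi\in\cQ_{\sym}$ is identical with the roles of $\Aspace$ and $\Bspace$ (and of $p$ and $q$) interchanged.

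First I would fix $\phi\in\cP_{\sym}$ and recall from the construction of the singular affine structure that $\Aspace_0=\bigcup_i\sigma_i^\circ\cup\bigcup_iS_i^\circ$, with $\Aspace\setminus\Aspace_0$ of codimension two, hence Lebesgue-null. So it suffices to establish (i) and (ii) separately on each of the (finitely many) open pieces $\sigma_i^\circ$ and $S_i^\circ$. On $\sigma_i^\circ$: pick any $j\ne i$; by Lemma~\ref{lem:pooh} applied to $\phi$ (the $\cP$-version), $(\partial^c\phi)(m)\subseteq\tau_i$ for $m\in\sigma_i^\circ$, and by Lemma~\ref{lem:stuff} (the $\cP$-analogue) the map $q_{j,i}^{-1}$ carries $(\partial^c\phi)(m)$ bijectively onto $\partial\phi_{i,j}(p_{i,j}^{-1}(m))$, where $\phi_{i,j}=(\phi-n_j)\circ p_{i,j}$ is a finite convex function on the open set $p_{i,j}^{-1}(\sigma_i^\circ)\subset\R^d$. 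Now invoke the standard facts: a finite convex function on an open subset of $\R^d$ is differentiable off a Lebesgue-null set, its gradient map is Borel measurable where defined, and — by Rademacher together with the fact that two distinct points of differentiability cannot share a gradient value outside a null set (see e.g.~\cite[\S2.1]{Figalli} or Alexandrov's theorem) — the set of $(m,m')$ with a common subgradient has null image. Since $p_{i,j}$ and $q_{j,i}$ are piecewise integral affine isomorphisms, they preserve Lebesgue-nullity and measurability, so these three properties transfer back verbatim to $\sigma_i^\circ$. The case of $S_i^\circ$ is handled the same way, using the second clause of Lemma~\ref{lem:stuff} (valid for $n\in T_i^\circ$, i.e.\ here for $m\in S_i^\circ$) and the chart $p_{i,j}^{-1}\colon S_i^\circ\simto\tilde S^\circ$.

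For part (ii), the one subtlety is that the ``bad set" is stated for $m,m'$ ranging over all of $\Aspace_0$, not within a single chart. But if $n\in(\partial^c\phi)(m)\cap(\partial^c\phi)(m')$ with $m,m'\in\Aspace_0$, $m\ne m'$, then $n$ lies in some $\tau_k$ (or $T_k$), and by Lemma~\ref{lem:pooh} applied to $\phi^c$ — equivalently, since $n\in(\partial^c\phi)(m)$ iff $m\in(\partial^c\phi^c)(n)$ — both $m$ and $m'$ lie in $\sigma_k$ (or $S_k$). So the pair $(m,m')$ automatically sits in a single closed chart; intersecting with $\Aspace_0$ and covering by the finitely many open charts, the global bad set is a finite union of the chartwise bad sets plus a codimension-two piece, hence null. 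One should also note measurability of the global map $(\partial^c\phi)\colon\Aspace_0\to\Bspace$: it agrees a.e.\ on each piece with a measurable function by the chart argument, and the pieces are Borel, so the glued map is measurable.

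I expect no serious obstacle here; the content is entirely in correctly matching up Lemma~\ref{lem:stuff} with the classical statements and being careful that the charts $p_{i,j}^{-1},q_{i,j}^{-1}$ are only piecewise affine globally but genuinely affine (hence nullity- and measurability-preserving in both directions) on the relevant open pieces. The mildest point of care is the global-versus-local issue in (ii), resolved as above by using that $(\partial^c\phi)$ and $(\partial^c\phi^c)$ are inverse multivalued maps together with Lemma~\ref{lem:pooh} to confine any offending pair $(m,m')$ to a common chart.
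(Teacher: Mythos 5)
Your argument is correct and takes essentially the same route as the paper, which likewise reduces (i) and (ii) to a.e.\ differentiability and a.e.\ injectivity of the gradient of the convex chart representatives via Lemma~\ref{lem:stuff}, using only the charts on the $S_i^\circ$ (these already cover $\Aspace$ up to a null set, so your separate treatment of the $\sigma_i^\circ$ is redundant, though your cross-chart confinement of offending pairs via Lemma~\ref{lem:pooh} in (ii) is a nice explicit touch the paper leaves implicit). Two index slips to correct: for $m\in\sigma_i^\circ$ one has $(\partial^c\phi)(m)\subseteq T_i$, not $\tau_i$; and $\sigma_i$ lies in $\Star(m_j)$ for $j\ne i$, so the relevant chart on $\sigma_i^\circ$ is $p_{j,i}$ with convex function $\phi_{j,i}$, not $p_{i,j}$ with $\phi_{i,j}$.
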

\begin{proof}
The convex function $\phi_{i,j}$ (defined analogously to $\psi_{i,j}$) is almost everywhere differentiable, so applying Lemma~\ref{lem:stuff} to each of the $S_i^{\circ}$, say, implies that $(\partial^c\phi)(m)$ is a singleton for a.e. $m\in\Aspace$, showing~(i).

The second point follows similarly, since the $q_{i,j}$ are measurable and $\Aspace$ is covered, up to a set of measure zero, by the sets $S_i^{\circ}$.
\end{proof}
Next we relate the c-transform on symmetric functions to the usual Legendre transform on $\R^d$. Denote by $L^\infty_{\sym}(\Aspace)$ and  $L^\infty_{\sym}(\Bspace)$ the sets of symmetric bounded functions on $\Aspace$ and $\Bspace$, respectively.
\begin{lem}\label{lem:c-transform-Leg-transform}
If $\p\in L^\infty_{\sym}(B)$ and $i\ne j$, then the convex function 
$(\psi^c-n_j)\circ p_{i,j}$ on $p_{i,j}^{-1}(S_i)\subset\R^d$ is the Legendre transform of the bounded function $\psi\circ q_{j,i}$ on $q_{j,i}^{-1}(\tau_i)\subset\R^d$. Similarly, the convex function $\psi^c\circ p_{i,j}$ on $p_{i,j}^{-1}(\sigma_j)$ is the Legendre transform of the convex function $\psi_{j,i}=(\psi-m_i)\circ q_{j,i}$ on $q_{j,i}^{-1}(T_j)$. The analogous statements hold for $\phi\in L^\infty(\Aspace)$.
\end{lem}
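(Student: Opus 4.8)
The plan is to reduce the statement to Proposition~\ref{prop:chart_pair}, which already expresses the global $M_\R$--$N_\R$ pairing in terms of the scalar product on $\R^d$ in the compatible charts, and then to recognize the definition of the $c$-transform as the defining supremum of an honest Legendre transform once we have put everything in coordinates. I will prove the first assertion, about $(\psi^c - n_j)\circ p_{i,j}$; the second is proved the same way after replacing $\tau_i$ by $T_j$, and the statements for $\phi\in L^\infty(\Aspace)$ are symmetric.

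First I would fix $i\ne j$ and an element $m\in\sigma_j$. The key input is Lemma~\ref{lem:pairsym2}(ii'): since $\psi$ is symmetric, $\psi^c(m)=\sup_{n\in\Bspace}\langle m,n\rangle-\psi(n)$ can be restricted to $n$ ranging over $\tau_i$, because for $m\in\sigma_j$ and any $n$ there is some $g\in G$ with $\langle m, g(n)\rangle\ge\langle m,n\rangle$ and $g(n)\in\tau_i$ (using $\psi(g(n))=\psi(n)$). [One must double-check the index bookkeeping: the point $m$ lies in $\sigma_j$, and the face of $\Bspace$ that "pairs well" with $\sigma_j$ is $\tau_j$, whereas we want to range over $\tau_i$; this matches because the statement concerns $p_{i,j}^{-1}(\sigma_j)$ and $q_{j,i}^{-1}(\tau_i)$, i.e.\ the chart indexed so that $\Star(m_i)\supset\sigma_j$ pairs with $\tau_i$ via \eqref{equ:cp2}.] Thus
\[
  \psi^c(m)=\sup_{n\in\tau_i}\big(\langle m,n\rangle-\psi(n)\big).
\]
Next I write $m=p_{i,j}(s)$ with $s\in p_{i,j}^{-1}(\sigma_j)\subset\tilde S$ and $n=q_{j,i}(t)$ with $t\in q_{j,i}^{-1}(\tau_i)\subset\R^d$. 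Applying \eqref{equ:cp2} with the roles of $i,j$ as in the chart (so that $\langle s,t\rangle=\langle p_{i,j}(s),\,q_{j,i}(t)-n_j\rangle=\langle m,\,n-n_j\rangle$), one gets $\langle m,n\rangle=\langle s,t\rangle+\langle m,n_j\rangle$, and since $m\in\Star(m_i)$ the quantity $\langle m,n_j\rangle=n_j\circ p_{i,j}(s)$ is exactly the piecewise linear correction term appearing in the statement. Substituting,
\[
  \big(\psi^c\circ p_{i,j}\big)(s)-\big(n_j\circ p_{i,j}\big)(s)
  =\sup_{t\in q_{j,i}^{-1}(\tau_i)}\Big(\langle s,t\rangle-\big(\psi\circ q_{j,i}\big)(t)\Big),
\]
which is precisely the Legendre transform of the bounded function $\psi\circ q_{j,i}$ on $q_{j,i}^{-1}(\tau_i)\subset\R^d$. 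This also shows the left side is convex, being a supremum of affine functions of $s$. For the second assertion one argues identically, but uses that $m\in\sigma_j$ can be made to range over all of $\Star(m_i)$ and restricts the sup to $n\in T_j$, and that by Lemma~\ref{lem:cstar} the function $\psi_{j,i}=(\psi-m_i)\circ q_{j,i}$ is genuinely convex on $q_{j,i}^{-1}(T_j)$, so the Legendre transform is taken of a convex function.

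The main obstacle I anticipate is purely bookkeeping: getting the index conventions in \eqref{equ:cp1}--\eqref{equ:cp2} and in the definitions \eqref{equ:pijinv}, \eqref{equ:qijinv} to line up so that the "extra" term that falls out of Proposition~\ref{prop:chart_pair} is exactly $n_j\circ p_{i,j}$ (resp.\ $m_i\circ q_{j,i}$) and not some other affine function, and making sure the domain over which the supremum is taken really is $q_{j,i}^{-1}(\tau_i)$ (resp.\ $q_{j,i}^{-1}(T_j)$) after the reduction via Lemma~\ref{lem:pairsym2}. There is no analytic difficulty: once the reduction of the supremum to the correct face and the chart computation are in place, the identity \emph{is} the definition of the Legendre transform, and convexity is automatic.
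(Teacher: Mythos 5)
Your overall strategy---reduce the supremum defining $\psi^c(m)$ to a single region of $\Bspace$ using Lemma~\ref{lem:pairsym2} together with the $G$-invariance of $\psi$, then convert the pairing into the Euclidean scalar product via Proposition~\ref{prop:chart_pair} so that the defining formula of the $c$-transform becomes literally a Legendre transform---is exactly the paper's. But the execution of the first assertion contains a false step, and it is precisely the index bookkeeping you flagged as the main risk. The first assertion of the lemma concerns $m\in S_i$ (the domain is $p_{i,j}^{-1}(S_i)$), not $m\in\sigma_j$; you have taken the domain $\sigma_j$ from the \emph{second} assertion and the target face $\tau_i$ from the \emph{first}. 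Lemma~\ref{lem:pairsym2} pairs $S_i$ with $\tau_i$ and $\sigma_j$ with $T_j$ (not with $\tau_j$, and certainly not with $\tau_i$): for $m\in\sigma_j$ it produces a maximizing $g$ with $g(n)\in T_j$. Consequently the identity $\psi^c(m)=\sup_{n\in\tau_i}\bigl(\langle m,n\rangle-\psi(n)\bigr)$ on which your computation rests is false for general $m\in\sigma_j$ and general symmetric $\psi$. For instance, for the $\psi$ of Example~\ref{exam:dualvertmass} and a.e.\ $m\in\sigma_j^\circ\cap S_k^\circ$ with $k\ne i,j$, the unique maximizer of $n\mapsto\langle m,n\rangle-\psi(n)$ is the barycenter $n_k'$ of $\tau_k$, which lies in $T_j$ but has $\b_i=\tfrac1{d+1}>0$ and hence is not in $\tau_i$; by uniqueness and compactness the supremum over $\tau_i$ is then strictly smaller than $\psi^c(m)$.

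The repair is immediate and recovers the paper's proof. For the first assertion take $m\in S_i$, so that Lemma~\ref{lem:pairsym2}(i) gives $\psi^c(m)=\sup_{n\in\tau_i}\bigl(\langle m,n\rangle-\psi(n)\bigr)$; then \eqref{equ:cp2} applies verbatim (it only requires $s\in\tilde S$ and $t\in q_{j,i}^{-1}(\tau_i)$) and yields
\[
  (\psi^c-n_j)(p_{i,j}(s))=\sup_{t\in q_{j,i}^{-1}(\tau_i)}\bigl(\langle s,t\rangle-\psi(q_{j,i}(t))\bigr).
\]
For the second assertion take $m\in\sigma_j$, reduce the supremum to $n\in T_j$, and use \eqref{equ:cp1} with the roles of $i$ and $j$ interchanged, so that the affine correction which falls out is $m_i\circ q_{j,i}$ absorbed into the function being transformed (giving $\psi_{j,i}$), rather than $n_j\circ p_{i,j}$ subtracted on the $\Aspace$-side. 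With that relabeling your argument is correct and coincides with the one in the paper.
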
 
\begin{proof}
  If $m\in S_i$, then Lemma~\ref{lem:pairsym2} implies that $\psi^c(m)=\sup_{n\in\tau_i}(\langle m,n\rangle-\psi(n))$. \corr{Writing  $m=p_{i,j}(x)$, $n=q_{j,i}(y)$, and using Proposition~\ref{prop:chart_pair}, we see that 
\begin{equation*}
  (\psi^c-n_j)(x)
  =\sup_{y\in q_{j,i}^{-1}(\tau_i)}(\langle x,y\rangle-\psi(p_{j,i}(y)),
\end{equation*}
which proves the first assertion. The remaining statements are proved in the same way.}
\end{proof}  
  
 Denote by $C^0_{\sym}(\Bspace)$ the set of symmetric continuous functions on $\Bspace$.
\begin{lem}\label{lem:candLeg}
  Suppose $\psi\in\cQ_{\sym}$ and $v\in C^0_{\sym}(\Bspace)$.
Then, for almost every $m\in\Aspace$, the function $t\mapsto(\psi+tv)^c(m)$ is differentiable at $t=0$, with derivative $-v((\partial^c\psi^c)(m))$.
\end{lem}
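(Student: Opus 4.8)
The plan is to recognize $t\mapsto(\psi+tv)^c(m)$ as a supremum of affine functions of $t$ and to read off its derivative at $t=0$ via a Danskin-type envelope argument, with the $c$-gradient of $\psi^c$ playing the role of the (a.e.\ unique) maximizer.

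First I would fix $m\in\Aspace$ and set $F_m(t):=(\psi+tv)^c(m)=\sup_{n\in\Bspace}\bigl(\langle m,n\rangle-\psi(n)-tv(n)\bigr)$. For each fixed $n$ this is an affine function of $t$ with slope $-v(n)$; since $\psi$ is continuous (Lemma~\ref{lem:Lip}), $v$ is continuous, and $\Bspace$ is compact, the family is uniformly bounded, so $F_m$ is a finite convex function of $t\in\R$. In particular its one-sided derivatives $F_m'(0^\pm)$ exist and satisfy $F_m'(0^-)\le F_m'(0^+)$, with equality iff $F_m$ is differentiable at $0$. The set of $n\in\Bspace$ realizing the supremum that defines $F_m(0)=\psi^c(m)$ is exactly $\{n:\psi(n)+\psi^c(m)=\langle m,n\rangle\}=(\partial^c\psi^c)(m)$, where I use $\psi^{cc}=\psi$ for $\psi\in\cQ$ (Lemma~\ref{lem:2c}); this set is nonempty by continuity.

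Next I would invoke the standard envelope formula for such parametrized suprema: because $\Bspace$ is compact and the integrand is jointly continuous in $(t,n)$,
\[
  F_m'(0^+)=\max_{n\in(\partial^c\psi^c)(m)}\bigl(-v(n)\bigr)\qquad\text{and}\qquad F_m'(0^-)=\min_{n\in(\partial^c\psi^c)(m)}\bigl(-v(n)\bigr).
\]
The bound $F_m'(0^+)\ge-v(n^\star)$ for any maximizer $n^\star$ is immediate from $F_m(t)\ge\langle m,n^\star\rangle-\psi(n^\star)-tv(n^\star)$. For the reverse I would take $t_k\downarrow0$, choose exact maximizers $n_k$ of $n\mapsto\langle m,n\rangle-\psi(n)-t_kv(n)$ (they exist by compactness), pass to a subsequence with $n_k\to\bar n$, observe that $\bar n\in(\partial^c\psi^c)(m)$ by continuity, and use $F_m(t_k)=\langle m,n_k\rangle-\psi(n_k)-t_kv(n_k)\le F_m(0)-t_kv(n_k)$ to get $(F_m(t_k)-F_m(0))/t_k\le-v(n_k)\to-v(\bar n)$. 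The formula for $F_m'(0^-)$ follows the same way.

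Finally, applying Lemma~\ref{lem:cgrad3}(i) to $\psi^c\in\cP_{\sym}$ (legitimate since $\psi\in\cQ_{\sym}$), for Lebesgue-a.e.\ $m\in\Aspace$ the $c$-subgradient $(\partial^c\psi^c)(m)$ is a single point. For such $m$ the two displayed one-sided derivatives coincide, hence $F_m$ is differentiable at $t=0$ with $F_m'(0)=-v\bigl((\partial^c\psi^c)(m)\bigr)$, which is the assertion. The only genuinely non-formal step is the envelope computation of $F_m'(0^\pm)$; this is where compactness of $\Bspace$ and continuity of $v$ and $\psi$ are used, while the passage to a.e.\ uniqueness of the maximizer is exactly Lemma~\ref{lem:cgrad3}.
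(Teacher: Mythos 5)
Your proposal is correct and follows essentially the same route as the paper's own ``more direct proof'': convexity of $t\mapsto(\psi+tv)^c(m)$ gives one-sided derivatives, compactness of $\Bspace$ lets you extract near-maximizers converging to a point of $(\partial^c\psi^c)(m)$ to sandwich those derivatives, and Lemma~\ref{lem:cgrad3}(i) applied to $\psi^c\in\cP_{\sym}$ gives a.e.\ single-valuedness so the two bounds coincide. The only cosmetic difference is that you phrase the envelope step as computing $F_m'(0^\pm)$ as max/min of $-v$ over the full argmax set before specializing to the singleton case, whereas the paper assumes the singleton from the start; the paper also records an alternative reduction to the Legendre-transform statement of \cite[Lemma~2.7]{BB13} via Lemma~\ref{lem:c-transform-Leg-transform}, which you do not need.
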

\begin{proof}
 Working in charts, using Lemma~\ref{lem:c-transform-Leg-transform}, this follows from the corresponding result about the Legendre transform on $\R^d$, as stated in~\eg\cite[Lemma~2.7]{BB13}.

A more direct proof goes as follows. Note that $(\psi+tv)^c(m)$ is convex in $t$. This means its left and right derivatives exist at $t=0$ and 
\begin{equation}
    \left.\frac{d(\psi+tv)^c(m)}{dt}\right|_{t=0^-} \leq \left.\frac{d(\psi+tv)^c(m)}{dt}\right|_{t=0^+}. 
    \label{eq:LeftRightDer}
\end{equation}
Assume $\partial^c\psi^c(m) = \{n\}$, and for each $t\not=0$, pick 
$n_t$ such that
\begin{equation} (\psi+tv)^c(m) = \langle m,n_t \rangle - \psi(n_t)-tv(n_t), \label{eq:PsitGrad}\end{equation}
which is possible by compactness of $B$.
By compactness of $B$, $\{n_t\}$ converges up to passing to subsequence to some $n_0\in B$ when $t\rightarrow 0$. 
We get, by continuity of the $c$-transform, that $n_0$ satisfies 
$\psi^c(m)+\psi(n_0) = \langle m,n_0 \rangle$; 
hence $n_0=n$ and $n_t\rightarrow n$. 
Using~\eqref{eq:PsitGrad}, this yields
\begin{eqnarray}  \frac{(\psi+tv)^c(m) - \psi^c(m)}{t} & = &  -v(n_t) - \frac{\psi(n_t) - \psi(n) - \langle m,n_t-n\rangle }{t}.
\end{eqnarray}
The numerator on the right hand side is positive since $m\in \partial^c\psi(n)$; hence 
$$ \left.\frac{d(\psi+tv)^c(m)}{dt}\right|_{t=0^+} \leq -v(n) \leq \left.\frac{d(\psi+tv)^c(m)}{dt}\right|_{t=0^-}. $$
Combining this with \eqref{eq:LeftRightDer} proves the lemma. 
\end{proof}

By Dominated Convergence, we obtain the following result, which is the analogue of the differentiability result needed to solve the complex and non-Archimedean Monge--Amp\`ere equations, respectively, see~\cite[Theorem~B]{BB10} and~\cite[\S7]{nama}. In what follows, $\mu$ denotes Lebesgue measure on $\Aspace$, of total mass $(d+2)^{d+1}/d!$.
\begin{cor}\label{cor:endiff}
  If $\psi\in\cQ_{\sym}$ and  $v\in C^0_{\sym}(\Bspace)$, then the function
  \begin{equation*}
    t\mapsto\int_{\Aspace}(\psi+tv)^c\,d\mu
  \end{equation*}    
  is differentiable at $t=0$, with derivative $-\int_{\Aspace} v((\partial^c\psi^c)(m))\,d\mu$.
\end{cor}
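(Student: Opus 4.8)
The plan is to deduce Corollary~\ref{cor:endiff} from Lemma~\ref{lem:candLeg} by a dominated convergence argument. First I would write the difference quotient
\[
  \frac{1}{t}\left(\int_{\Aspace}(\psi+tv)^c\,d\mu-\int_{\Aspace}\psi^c\,d\mu\right)
  =\int_{\Aspace}\frac{(\psi+tv)^c(m)-\psi^c(m)}{t}\,d\mu(m),
\]
which is legitimate since $\mu$ is a finite measure (of mass $(d+2)^{d+1}/d!$) and all the functions involved are bounded (Lemma~\ref{lem:2c} and the remarks following Definition~\ref{defi:PandQ} bound $(\psi+tv)^c$ uniformly for $|t|\le1$, say). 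By Lemma~\ref{lem:candLeg}, for $\mu$-almost every $m\in\Aspace$ the integrand converges, as $t\to0$, to $-v((\partial^c\psi^c)(m))$. So the only thing to check in order to pass the limit under the integral sign is a uniform integrable domination of the difference quotients.

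For the domination, the key observation is that $t\mapsto(\psi+tv)^c(m)$ is convex in $t$ for each fixed $m$ (it is a supremum of affine functions of $t$, namely $t\mapsto\langle m,n\rangle-\psi(n)-tv(n)$ over $n\in\Bspace$). Hence for $|t|\le1$ the difference quotient $\big((\psi+tv)^c(m)-\psi^c(m)\big)/t$ lies between its values at $t=-1$ and $t=+1$, i.e.\ between $\psi^c(m)-(\psi-v)^c(m)$ and $(\psi+v)^c(m)-\psi^c(m)$; alternatively one notes directly that $(\psi+tv)^c$ is $\|v\|$-Lipschitz in $t$ uniformly in $m$, since $\psi+tv$ and $\psi+t'v$ differ by at most $|t-t'|\,\|v\|$ in sup norm and the $c$-transform is contractive. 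Either way the difference quotients are bounded in absolute value by the constant $\|v\|_{C^0(\Bspace)}$, which is $\mu$-integrable on $\Aspace$. Dominated convergence then gives
\[
  \left.\frac{d}{dt}\right|_{t=0}\int_{\Aspace}(\psi+tv)^c\,d\mu
  =\int_{\Aspace}\left.\frac{d}{dt}\right|_{t=0}(\psi+tv)^c(m)\,d\mu(m)
  =-\int_{\Aspace}v\big((\partial^c\psi^c)(m)\big)\,d\mu(m),
\]
noting that $m\mapsto v((\partial^c\psi^c)(m))$ is $\mu$-measurable (it is defined a.e.\ and $v$ is continuous) by Lemma~\ref{lem:cgrad3}, applied to $\psi^c\in\cP_{\sym}$.

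This is essentially routine, so I do not expect a serious obstacle; the only point deserving care is making sure the Lipschitz/convexity bound on the difference quotient is genuinely uniform in $m$, which is immediate from contractivity of the $c$-transform in the sup norm. One small subtlety: we should remark that $\psi+tv\in L^\infty_{\sym}(\Bspace)$ for all $t$ (so that $(\psi+tv)^c\in\cP_{\sym}$ and the hypotheses of Lemma~\ref{lem:candLeg} apply), which is clear since both $\psi$ and $v$ are symmetric and bounded.
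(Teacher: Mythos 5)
Your argument is correct and is exactly the paper's proof: the paper deduces the corollary from Lemma~\ref{lem:candLeg} by dominated convergence, with the uniform bound on the difference quotients coming from contractivity of the $c$-transform, just as you spell out. No issues.
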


%
%
\subsection{The tropical Monge--Amp\`ere measure}
We can now use Corollary~\ref{cor:endiff} to assign a symmetric positive measure $\nu_\psi$ on $\Bspace$ to any \emph{symmetric} function $\psi\in\cQ_{\sym}$, in a way which is compatible with the variational approach to the Alexandrov Monge--Amp\` ere operator. Hence, we will think of $\nu_\psi$ as the Monge--Amp\`ere measure of $\psi$.
\begin{defi}\label{defi:MA}
  Given $\psi\in\cQ_{\sym}$ we define a positive Radon measure $\nu_\psi$ on $\Bspace$ of mass $|\Aspace|$ by declaring 
  \begin{equation*}
    \int_{\Bspace}v\,d\nu_\psi
    :=-\frac{d}{dt}\bigg|_{t=0}\int_{\Aspace}(\psi+tv)^c\,d\mu
    = -\int_{\Aspace} (v \circ \partial^c\psi^c) \,d\mu
  \end{equation*}
  for every $v\in C^0_{\sym}(\Bspace)$.  
\end{defi}

\begin{prop}\label{prop:cMA}
For any $\psi\in\cQ_{\sym}$ and Lebesgue measurable $U\subset\Bspace$, we have
\[
\nu_{\psi}(U) = \mu((\partial^c\psi)(U)).
\]
\end{prop}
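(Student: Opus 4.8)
The plan is to identify $\nu_\psi$ with the pushforward of Lebesgue measure $\mu$ under the $\mu$-almost everywhere defined c-gradient map $\partial^c\psi^c\colon\Aspace\to\Bspace$, and then to observe that the multivalued image $(\partial^c\psi)(U)$ differs from the preimage $(\partial^c\psi^c)^{-1}(U)$ only by a $\mu$-null set, exploiting that $\partial^c\psi$ and $\partial^c\psi^c$ are mutually inverse.

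First I would show $\nu_\psi=(\partial^c\psi^c)_*\mu$. Put $\phi:=\psi^c$, which lies in $\cP_{\sym}$ since the c-transform carries $\cQ_{\sym}$ into $\cP_{\sym}$; by Lemma~\ref{lem:cgrad3} the map $\partial^c\phi$ is single-valued and measurable outside a $\mu$-null set $E\subset\Aspace$. Since $\psi$ and $\phi$ are $G$-invariant and $\langle gm,gn\rangle=\langle m,n\rangle$ for $g\in G$, the identity defining the c-subgradient gives $\partial^c\phi(gm)=g\,\partial^c\phi(m)$ for a.e.\ $m$, and as $\mu$ is $G$-invariant it follows that $(\partial^c\phi)_*\mu$ is a symmetric positive Radon measure on $\Bspace$ of mass $\mu(\Aspace)=|\Aspace|$. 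By Definition~\ref{defi:MA} combined with Corollary~\ref{cor:endiff}, $\int_\Bspace v\,d\nu_\psi=\int_\Aspace v\circ\partial^c\phi\,d\mu=\int_\Bspace v\,d\bigl((\partial^c\phi)_*\mu\bigr)$ for every $v\in C^0_{\sym}(\Bspace)$. Two symmetric Radon measures on the compact space $\Bspace$ that assign the same integral to every symmetric continuous function are equal (replace an arbitrary $w\in C^0(\Bspace)$ by its $G$-average $\frac1{|G|}\sum_{g\in G}w\circ g$, which is symmetric and has the same integral against each of them). Hence $\nu_\psi=(\partial^c\psi^c)_*\mu$, so $\nu_\psi(U)=\mu\bigl((\partial^c\psi^c)^{-1}(U)\bigr)$ for every measurable $U\subseteq\Bspace$.

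Next I would compare this preimage with $(\partial^c\psi)(U)$ via the inversion relation $m\in(\partial^c\psi)(n)\iff n\in(\partial^c\psi^c)(m)$. For $m\notin E$: if $\partial^c\psi^c(m)\in U$ then $m\in\partial^c\psi\bigl(\partial^c\psi^c(m)\bigr)\subseteq(\partial^c\psi)(U)$; and if $m\in(\partial^c\psi)(U)$, choosing $n\in U$ with $m\in\partial^c\psi(n)$ gives $n\in\partial^c\psi^c(m)$, which — $\partial^c\psi^c(m)$ being a singleton — forces $\partial^c\psi^c(m)\in U$, i.e.\ $m\in(\partial^c\psi^c)^{-1}(U)$. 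Thus $(\partial^c\psi)(U)\,\triangle\,(\partial^c\psi^c)^{-1}(U)\subseteq E$ is $\mu$-null; by completeness of Lebesgue measure $(\partial^c\psi)(U)$ is then $\mu$-measurable with $\mu\bigl((\partial^c\psi)(U)\bigr)=\mu\bigl((\partial^c\psi^c)^{-1}(U)\bigr)=\nu_\psi(U)$, which is the assertion.

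The argument is mostly bookkeeping. I expect the only delicate points to be (a) upgrading the defining identity for $\nu_\psi$ from symmetric test functions to an identity of measures, which relies on both sides being symmetric, and (b) the comparison of the multivalued image $(\partial^c\psi)(U)$ with the honest preimage $(\partial^c\psi^c)^{-1}(U)$ — this is where Lemma~\ref{lem:cgrad3} is indispensable, and where one must invoke completeness of $\mu$ merely to know that $(\partial^c\psi)(U)$ is measurable in the first place.
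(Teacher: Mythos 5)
Your proposal is correct and follows the same route as the paper's proof: first identify $\nu_\psi=(\partial^c\psi^c)_*\mu$ via Lemma~\ref{lem:cgrad3} and Corollary~\ref{cor:endiff}, then use that $\partial^c\psi$ and $\partial^c\psi^c$ are mutually inverse to convert the preimage into the multivalued image. You simply spell out two steps the paper leaves implicit (symmetrization of test functions and the null-set comparison of $(\partial^c\psi)(U)$ with $(\partial^c\psi^c)^{-1}(U)$), and both are handled correctly.
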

\begin{proof}
First, by Lemma~\ref{lem:cgrad3}, the multivalued map $\partial^c\psi^c$ is $\mu$-a.e. single valued, so by the standard change of variables formula and Corollary~\ref{cor:endiff}, we have:
\[
\nu_{\psi} = (\partial^c\psi^c)_*\mu.
\]
Since $\partial^c\psi^c$ and $\partial^c\psi$ are inverses, the result now follows from the definition of the pushforward measure.
\end{proof}

That $\nu_\psi$ is compatible with the Monge--Amp\`ere measure in charts now follows immediately from Lemma~\ref{lem:stuff}:
\begin{cor}\label{cor:MAincoords}
For any $\psi\in\cQ_{\sym}$ and any $i\ne j$, we have:
\begin{equation*}
  \nu_{\psi}|_{T_i^{\circ}}=(q_{i,j})_*\MAR\left(\psi_{i,j}|_{q_{i,j}^{-1}(T_i^{\circ})}\right)
  \quad\text{and}\quad
  \nu_{\psi}|_{\tau_j^{\circ}}=(q_{i,j})_*\MAR\left(\psi_{i,j}|_{q_{i,j}^{-1}(\tau_j^{\circ})}\right),
\end{equation*}
and for any $j$ we have
\begin{equation*}
  \nu_\psi|_{\tau_j^\circ}=\MAR((\psi\circ q_{i,j})|_{q_{i,j}^{-1}(\tau_j^\circ)}).
\end{equation*}
\end{cor}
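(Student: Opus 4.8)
The plan is to read off all three equalities by combining the pushforward description of $\nu_\psi$ from Proposition~\ref{prop:cMA} (namely $\nu_\psi(U)=\mu\big((\partial^c\psi)(U)\big)$ for measurable $U$) with the chartwise description of the $c$-subgradient from Lemma~\ref{lem:stuff}, and then using that the coordinate maps $p_{j,i}$ and $q_{i,j}$ carry Lebesgue measure to Lebesgue measure. Fix $i\neq j$ throughout.

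For the $T_i^\circ$-statement, let $U\subseteq T_i^\circ$ be Lebesgue measurable. By Proposition~\ref{prop:cMA}, $\nu_\psi(U)=\mu\big((\partial^c\psi)(U)\big)$. By Lemma~\ref{lem:pooh} the set $(\partial^c\psi)(U)$ lies in $(\partial^c\psi)(T_i^\circ)\subseteq\sigma_i\subseteq\Star(m_j)$, so the chart $p_{j,i}^{-1}$ restricts to an integral piecewise affine isomorphism on it. Taking a union over $n\in U$ in the bijection of Lemma~\ref{lem:stuff} yields the set-level identity $p_{j,i}^{-1}\big((\partial^c\psi)(U)\big)=\partial\psi_{i,j}\big(q_{i,j}^{-1}(U)\big)$. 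Since $p_{j,i}$ pushes Lebesgue measure on $\R^d$ to $\mu$ on $\Star(m_j)$, the left-hand side has Lebesgue measure $\mu\big((\partial^c\psi)(U)\big)=\nu_\psi(U)$; since $\psi_{i,j}$ is convex on $q_{i,j}^{-1}(\Star(n_i))\supseteq q_{i,j}^{-1}(T_i^\circ)$ by Lemma~\ref{lem:cstar}, the right-hand side has Lebesgue measure $\MAR\big(\psi_{i,j}|_{q_{i,j}^{-1}(T_i^\circ)}\big)\big(q_{i,j}^{-1}(U)\big)$ by the very definition of the Alexandrov Monge--Amp\`ere measure together with the locality of $\MAR$. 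Unwinding the definition of the pushforward under $q_{i,j}$ gives exactly $\nu_\psi(U)=(q_{i,j})_*\MAR\big(\psi_{i,j}|_{q_{i,j}^{-1}(T_i^\circ)}\big)(U)$, which is the first displayed equality.

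The $\tau_j^\circ$-statement is proved by the same chain applied to a measurable $U\subseteq\tau_j^\circ$: Lemma~\ref{lem:pooh} now places $(\partial^c\psi)(U)$ in $S_j\subseteq\Star(m_j)$, and Lemma~\ref{lem:stuff} applies verbatim to points of $\tau_j^\circ$. For the last equality I would only compare $\psi_{i,j}=(\psi-m_j)\circ q_{i,j}$ with $\psi\circ q_{i,j}$ on $q_{i,j}^{-1}(\tau_j^\circ)$: since $\langle m_j,\cdot\rangle\equiv 1$ on $\tau_j$ by the definition of $\tau_j$, we have $m_j\circ q_{i,j}\equiv 1$ on $q_{i,j}^{-1}(\tau_j)$, so $\psi_{i,j}=\psi\circ q_{i,j}-1$ there; as adding a constant does not change the real Monge--Amp\`ere measure, $\MAR\big(\psi_{i,j}|_{q_{i,j}^{-1}(\tau_j^\circ)}\big)=\MAR\big((\psi\circ q_{i,j})|_{q_{i,j}^{-1}(\tau_j^\circ)}\big)$, and the third equality follows from the second via the integral affine isomorphism $q_{i,j}\colon q_{i,j}^{-1}(\tau_j^\circ)\simto\tau_j^\circ$, which identifies these two Monge--Amp\`ere measures.

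I do not expect a serious obstacle here: the content is already packaged in Lemmas~\ref{lem:pooh}, \ref{lem:cstar} and~\ref{lem:stuff}, and what remains is bookkeeping. The one point requiring care is the passage from the fiberwise bijection of Lemma~\ref{lem:stuff} to the set-level identity for $U$, together with checking that all the measures in sight are unambiguous — this uses Lemma~\ref{lem:cgrad3}(ii) to ensure that the $c$-subgradient images of distinct points of $\Bspace_0$ overlap in a Lebesgue-null set, so that $\mu\big((\partial^c\psi)(U)\big)$ and $\leb\big(\partial\psi_{i,j}(q_{i,j}^{-1}(U))\big)$ genuinely compute the asserted measures.
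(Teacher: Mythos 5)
Your argument is correct and is essentially the paper's own proof: the authors deduce the corollary "immediately" from Lemma~\ref{lem:stuff} together with Proposition~\ref{prop:cMA}, which is exactly the chain you spell out (pushforward description of $\nu_\psi$, fiberwise bijection of subgradients in charts promoted to a set-level identity, measure preservation of $p_{j,i}$ and $q_{i,j}$, and the constancy of $m_j$ on $\tau_j$ for the last equality). The extra care you take with Lemma~\ref{lem:cgrad3}(ii) and with the implicit identification via $q_{i,j}$ in the third display is exactly the bookkeeping the paper leaves to the reader.
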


Corollary~\ref{cor:MAincoords} allows us to now apply the standard theory for the Monge--Amp\`ere operator. For instance, we see that $\nu_\psi$ is weakly continuous under uniform convergence of the potentials, using the following result:
\begin{lem}\label{lem:convdir}
\cite[Theorem~2.1.22]{Hormander}  Let $\Omega\subset\R^d$ be an open convex subset, $u_j,u\colon\Omega\to\R$ convex functions, and assume that $u_j\to u$ pointwise on $\Omega$. Then there exists a subset $E\subset\Omega$ of full measure such that for every $x\in\Omega$, $u_j$ and $u$ are differentiable at $x$, and $u_j'(x)\to u'(x)$.
\end{lem}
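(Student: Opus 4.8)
The plan is to reduce Lemma~\ref{lem:convdir} to three classical facts about convex functions: (a) a finite-valued convex function on an open set is locally Lipschitz, hence differentiable outside a set of Lebesgue measure zero; (b) pointwise convergence of convex functions on an open convex set is automatically locally uniform, and produces uniform local Lipschitz bounds; and (c) a locally uniform limit of subgradient inequalities is again a subgradient inequality. Granting these, the argument is short, and all three ingredients are exactly those used in Hörmander, \emph{Notions of Convexity}, \S2.1.

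First I would fix the exceptional set. By (a), each of $u$ and the countably many $u_j$ is differentiable off a Lebesgue-null set; let $N\subset\Omega$ be the union of these sets, which is null, and put $E:=\Omega\setminus N$. Then every function $u_j$ and $u$ is differentiable at every point of $E$, and $E$ has full measure. It remains to show $u_j'(x)\to u'(x)$ for $x\in E$.

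Next I would establish the needed convergence and boundedness. Fix $x\in\Omega$ and pick $r>0$ with $\overline{B(x,2r)}\subset\Omega$; enclose $\overline{B(x,2r)}$ in a simplex $\Sigma\subset\Omega$. Since a convex function on $\Sigma$ is bounded above by the (finite) max of its values at the vertices and bounded below by any affine support function, pointwise convergence $u_j\to u$ at the vertices of $\Sigma$, together with convexity, forces $u_j\to u$ uniformly on $\overline{B(x,2r)}$; in particular $M:=\sup_j\|u_j\|_{L^\infty(\overline{B(x,2r)})}<\infty$. The standard estimate for convex functions then gives that $u_j|_{B(x,r)}$ is $\tfrac{2M}{r}$-Lipschitz, uniformly in $j$, so the gradients $u_j'(x)$ (defined for $x\in E$) are bounded uniformly in $j$. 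Finally, fix $x\in E$ and suppose $u_j'(x)\not\to u'(x)$. By the uniform bound we may pass to a subsequence with $u_{j_k}'(x)\to\xi$ for some $\xi\ne u'(x)$. For all $y$ the subgradient inequality $u_{j_k}(y)\ge u_{j_k}(x)+\langle u_{j_k}'(x),\,y-x\rangle$ holds; letting $k\to\infty$ and using the locally uniform convergence yields $u(y)\ge u(x)+\langle\xi,\,y-x\rangle$ for all $y\in\Omega$, i.e.\ $\xi\in\partial u(x)$. But $u$ is differentiable at $x$, so $\partial u(x)=\{u'(x)\}$, contradicting $\xi\ne u'(x)$. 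Hence $u_j'(x)\to u'(x)$ for every $x\in E$.

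The main obstacle is essentially notational rather than conceptual: the one point needing a little care is the passage from pointwise to locally uniform convergence of the $u_j$ (and the attendant uniform Lipschitz estimate), which I would handle exactly as above via the barycentric control on a small simplex; everything else is Bolzano--Weierstrass in $\R^d$ together with the closed-graph stability of the subdifferential under uniform convergence.
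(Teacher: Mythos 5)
Your proof is correct: the paper gives no argument for this lemma, citing it directly as H\"ormander, \emph{Notions of Convexity}, Theorem~2.1.22, and your write-up is precisely the standard proof of that result (a.e.\ differentiability of each function, uniform local bounds and Lipschitz constants forcing locally uniform convergence, then stability of the subgradient inequality under locally uniform limits together with $\partial u(x)=\{u'(x)\}$ at points of differentiability). The only step stated a bit quickly is the claim that pointwise convergence at the vertices of a simplex already yields uniform convergence on the ball --- one also needs the uniform lower bound via the reflection inequality $u_j(y)\ge 2u_j(x)-u_j(2x-y)$ and then equicontinuity plus pointwise convergence --- but you correctly flag this as the point requiring care, and the standard argument you invoke fills it in.
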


\begin{prop}\label{prop:MAcont}
  If a sequence $(\psi_k)_{k=1}^\infty$ of functions in $\cQ_{\sym}$ converges uniformly to $\psi\in\cQ_{\sym}$, then $\nu_{\psi_k}\to\nu_\psi$ weakly as measures on $\Bspace$.
\end{prop}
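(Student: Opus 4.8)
The plan is to deduce the weak convergence directly from the variational definition of $\nu_\psi$ in Definition~\ref{defi:MA}, combined with a soft convexity argument, avoiding any chart-by-chart analysis of the Monge--Amp\`ere measures. The first step is to reduce to symmetric test functions: since every $\nu_{\psi_k}$ and $\nu_\psi$ is $G$-invariant, for any $v\in C^0(\Bspace)$ both $\int_\Bspace v\,d\nu_{\psi_k}$ and $\int_\Bspace v\,d\nu_\psi$ are unchanged when $v$ is replaced by its symmetrization $\bar v:=|G|^{-1}\sum_{g\in G}v\circ g\in C^0_{\sym}(\Bspace)$, so it suffices to prove $\int_\Bspace v\,d\nu_{\psi_k}\to\int_\Bspace v\,d\nu_\psi$ for a fixed $v\in C^0_{\sym}(\Bspace)$.

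For such $v$, I would study the functions $g_k,g\colon\R\to\R$ given by $g_k(t):=\int_\Aspace(\psi_k+tv)^c\,d\mu$ and $g(t):=\int_\Aspace(\psi+tv)^c\,d\mu$, and record three facts. First, $g_k$ and $g$ are convex: for each $m\in\Aspace$, the map $t\mapsto(\psi_k+tv)^c(m)=\sup_{n\in\Bspace}\bigl(\langle m,n\rangle-\psi_k(n)-tv(n)\bigr)$ is a supremum of affine functions of $t$, hence convex, and integration in $m$ preserves convexity. Second, $g_k\to g$ pointwise on $\R$: for fixed $t$ we have $\psi_k+tv\to\psi+tv$ uniformly, and the functional $\phi\mapsto\int_\Aspace\phi^c\,d\mu$ on $L^\infty_{\sym}(\Bspace)$ is Lipschitz for the sup norm, because the c-transform is contractive and $\mu$ is finite. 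Third, by Corollary~\ref{cor:endiff} together with Definition~\ref{defi:MA}, each $g_k$ is differentiable at $t=0$ with $g_k'(0)=-\int_\Bspace v\,d\nu_{\psi_k}$, and likewise $g$ is differentiable at $t=0$ with $g'(0)=-\int_\Bspace v\,d\nu_\psi$.

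It then remains only to show $g_k'(0)\to g'(0)$, which is the elementary fact that the derivatives of a pointwise-convergent sequence of convex functions on an interval converge at any point at which the limit is differentiable. Concretely, for $h>0$ convexity of $g_k$ gives $\frac{g_k(0)-g_k(-h)}{h}\le g_k'(0)\le\frac{g_k(h)-g_k(0)}{h}$; passing to $\limsup$ (resp.\ $\liminf$) in $k$ and using $g_k\to g$ pointwise yields $\limsup_k g_k'(0)\le\frac{g(h)-g(0)}{h}$ and $\liminf_k g_k'(0)\ge\frac{g(0)-g(-h)}{h}$, and letting $h\to0^+$ forces $\lim_k g_k'(0)=g'(0)$. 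This is exactly $\int_\Bspace v\,d\nu_{\psi_k}\to\int_\Bspace v\,d\nu_\psi$, which completes the argument.

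I do not expect a genuine obstacle: the only non-formal input is Corollary~\ref{cor:endiff}, which is already available, and the convexity lemma is standard. (An alternative route would be to prove $\partial^c\psi_k^c\to\partial^c\psi^c$ $\mu$-a.e.\ by applying Lemma~\ref{lem:convdir} in the coordinate charts of~\S\ref{sec:SIAS} and then invoking dominated convergence, using that $|v\circ\partial^c\psi_k^c|\le\|v\|_\infty$; but this forces a discussion of the singular locus, whereas the convexity-in-$t$ argument sidesteps it.) The one point worth a remark is that $\psi_k+tv$ lies only in $L^\infty_{\sym}(\Bspace)$ and not in $\cQ_{\sym}$; this is harmless, since its c-transform is still defined and contractivity still applies, and Corollary~\ref{cor:endiff} is applied with base point $\psi_k\in\cQ_{\sym}$ and variation $tv\in C^0_{\sym}(\Bspace)$.
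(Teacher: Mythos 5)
Your argument is correct, and it takes a genuinely different route from the paper. The paper's proof starts from the pushforward description $\nu_\psi=(\partial^c\psi^c)_*\mu$ of Proposition~\ref{prop:cMA}, shows $\partial^c\psi_k^c\to\partial^c\psi^c$ $\mu$-a.e.\ by working in the charts on the open stars $S_i^\circ$ (where the c-gradient becomes the gradient of a convex function, Lemma~\ref{lem:stuff}) and invoking Lemma~\ref{lem:convdir}, then concludes by dominated convergence --- this is precisely the alternative you sketch in parentheses. Your main argument instead exploits convexity in the perturbation parameter $t$: since $t\mapsto(\psi_k+tv)^c(m)$ is a supremum of affine functions of $t$, the functions $g_k(t)=\int_\Aspace(\psi_k+tv)^c\,d\mu$ are convex, converge pointwise by contractivity of the c-transform, and are differentiable at $t=0$ with derivative $-\int v\,d\nu_{\psi_k}$ by Corollary~\ref{cor:endiff}; the elementary fact about convergence of derivatives of convex functions then finishes the proof. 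What your route buys is that all chart-dependent analysis is quarantined inside Corollary~\ref{cor:endiff}, which is already available, so the proposition itself becomes a purely formal consequence of the variational definition; what the paper's route buys is the stronger intermediate statement that the c-gradient maps themselves converge almost everywhere, which is of independent use. Your reduction to symmetric test functions and your closing remark that $\psi_k+tv$ need only lie in $L^\infty_{\sym}(\Bspace)$ are both handled correctly.
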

\begin{proof}
  By definition, we have $\nu_\psi=(\partial^c\psi^c)_*\mu$, so by Dominated Convergence it suffices to prove that $\partial^c\psi_k^c\to\partial^c\psi^c$ a.e. Now, the c-transform is 1-Lipschitz, so we have $\psi_k^c\to\psi^c$ uniformly on $\Aspace$. Further, on the open stars $S_i^\circ$, which together have full measure, the c-gradient is computed as the gradient of a convex function on an open subset of $\R^d$, see Lemma~\ref{lem:stuff}. The result now follows from Lemma~\ref{lem:convdir}.
\end{proof}
\begin{rmk}\label{rmk:MAMetric}
As noted in~\S\ref{sec:AffRb}, any $\psi\in \cQ$ defines a convex metric $\Psi$ on an integral affine $\R$-bundle $\Lambda$ on the integral affine manifold $\Bspace_0$. Such a convex metric has a natural Monge--Amp\`ere measure $\MAR(\Psi)$, defined as $\MAR(\Psi\circ s)$ for any local affine section $s$, and Corollary~\ref{cor:MAincoords} shows that the restriction of $\nu_\psi$ to $\Bspace_0$ equals $\MAR(\Psi)$. Note, however, that $\nu_\psi$ may put mass also on $\Bspace\setminus\Bspace_0$, see Example~\ref{exam:singmass}.
\end{rmk}
%
%
\subsection{Examples}\label{sec:examples}
We conclude by giving a few examples. Recall that the total mass of $\nu_\psi$ is always $\frac{(d+2)^{d+1}}{d!}$ for $\psi\in \mathcal{Q}_{\sym}$. For instance, when $d=2$, the total mass is $32$.  
\begin{exam}\label{exam:vertmass}
Let $\psi = \max_i m_i \equiv 1 \in \cQ_{\sym}$ and $m\in A$. The supremum 
$$ \sup_{n\in B} \langle m,n\rangle - \psi(n) = \sup_{n\in B} \langle m,n\rangle - 1 $$
is achieved at one of the vertices $\{n_0,\ldots,n_{d+1}\}$. It follows that $\partial^c\psi^c(m)$ contains a vertex for each $m\in B$. Consequently, since $\partial^c\psi^c$ is single valued almost everywhere, $\nu_\psi$ is supported at the vertexes and by symmetry $\nu_{\psi} = \sum_i \frac{(d+2)^d}{d!} \delta_{n_i}$.
\end{exam}
\begin{exam}\label{exam:dualvertmass}
For each $i$, let $n_i' := \frac{-n_i}{3}$ be the barycenters of the $\tau_i$. Using basic properties for the $c$-gradient, one can see that $\psi := (\max_i n_i')^c$ satisfies $\nu_\psi := \sum_i \frac{(d+2)^d}{d!}\delta_{n_i'}$. This can be computed explicitly, for example when $d=2$, 
$$\psi = \max \left\{\max_i m_i' -\frac{1}{9}, \max_{i\not=j} \frac{m_i + m_j}{2}-\frac{1}{3} \right\},$$ where $m_i' = \frac{-m_i}{3}$. 
\end{exam}
\begin{exam}\label{exam:singmass}
$\psi$ can also charge the singular set -- indeed, when $d=2$, one can verify that $\psi = \max\left\{\max_i m_i', \frac{1}{3} \right\}$ does not charge $B_0$ at all, and so we have $\nu_{\psi} = \sum_{x_i\in\Bspace\setminus\Bspace_0} \frac{16}{3}\delta_{x_i}$, by symmetry. One can also check that, while $\psi_{j,k}$ is actually convex on all of $q_{j,k}^{-1}(\Star(n_j)$, $\MAR(\psi_{j,k})(x_i) = \frac{80}{9} > \frac{16}{3}$ for each $x_i\in \mathrm{Star}(n_j)^{\circ}\setminus B_0$, so the equalities in Corollary~\ref{cor:MAincoords} cannot be extended to all of $\Star(n_j)^\circ$.
\end{exam}

For $\psi\in\cQ_{\sym}$, we have two equivalent definitions for $\nu_{\psi}$ (Definition~\ref{defi:MA} and Proposition~\ref{prop:cMA}), which agree with the Monge--Amp\`ere measure of $\psi_{i,j}$ in coordinates (Corollary~\ref{cor:MAincoords}). For non-symmetric $\psi\in\mathcal{Q}$, none of these are well-defined in general, and when they are, they need not agree with the Monge--Amp{\`e}re computed in coordinates, as the following examples show.
 
 \begin{exam}
    Let $d=2$ and $\psi := m_i$, for some fixed $i$; then the Monge--Amp\`ere measure of $\psi_{i,j}$ is $\MAR(\psi_{i,j}) = 128\delta_{0}$. 
    Since this gives a total mass larger than 32, we conclude that Corollary~\ref{cor:MAincoords} cannot hold for this $\psi$. 
 \end{exam}
 
 \begin{exam}
 Let $d=2$, and fix $0\leq i\leq 3$. If $\psi = \max_{j \not= i} m_j$, then one checks that $\mu(\partial^c\psi) = 8\delta_{n_i} + 32\delta_{n_i'}$; hence the right hand side in Proposition~\ref{prop:cMA} does not assign the correct total mass for non-symmetric $\psi$.    
 \end{exam}
 
 \begin{exam}\label{exam:non_diff}
 Let $d=1$, and $\psi(n) = \max_i \langle m_i, n - n_0'\rangle$, with $n_j' = \frac{-n_j}{2}$ for $j=0,1,2$. Let $v\geq 0$ be a piecewise linear function with $v(n_0) = 1$ and $v(n'_1)=v(n_2') = 0$. Then $(\psi + tv)^c(m)$ will not be differentiable in $t\in (-\e, \e)$ for all $m\in \sigma_0$; hence Definition~\ref{defi:MA} does not make sense for this $\psi$.
 \end{exam}

%
%
%
%
\section{The tropical Monge--Amp\`ere equation}\label{sec:tropMA}
We are now ready to study the (symmetric) tropical Monge--Amp\`ere equation. Thus, given a symmetric positive measure $\nu$ on $\Bspace$ of mass $|A|$, we seek to find $\psi\in\cQ_{\sym}$ such that $\nu_\psi=\nu$. In particular, we will prove Theorem~B in the introduction.
%
%
\subsection{Variational formulation}
Given a measure $\nu$ as above, we define a functional 
\[
  F=F_\nu\colon\cQ_{\sym}\to\R
\]
by
\[
  F(\psi):=\int_{\Aspace}\psi^c\,d\mu+\int_{\Bspace}\psi\,d\nu.
\]
\begin{lem}\label{lem:varappr}
  A function $\psi\in\cQ_{\sym}$ minimizes the functional $F$ iff $\nu_\psi=\nu$.
\end{lem}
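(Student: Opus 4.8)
The plan is to show that $F$ is convex on the convex set $\cQ_{\sym}$ and that its directional derivatives at a point $\psi$ vanish in all admissible directions if and only if $\nu_\psi = \nu$. The key computational input is Corollary~\ref{cor:endiff}, which computes the derivative of the first term $\int_\Aspace \psi^c\,d\mu$ along a path $t\mapsto \psi + tv$.

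First I would verify convexity. For $\psi_0,\psi_1 \in \cQ_{\sym}$ and $\psi_t := (1-t)\psi_0 + t\psi_1$, the map $\psi \mapsto \int_\Bspace \psi\,d\nu$ is linear in $\psi$, hence affine along the path. For the term $\int_\Aspace \psi^c\,d\mu$, note that $\psi\mapsto \psi^c$ is convex: $\psi_t^c(m) = \sup_{n}(\langle m,n\rangle - \psi_t(n)) \le (1-t)\psi_0^c(m) + t\psi_1^c(m)$ since the function inside the supremum is affine in $t$ for each fixed $n$. Integrating against the positive measure $\mu$ preserves the inequality, so $F$ is convex on $\cQ_{\sym}$. (Here I use Corollary~\ref{cor:PQconvex}, that $\cQ$, and hence $\cQ_{\sym}$, is convex.)

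Next, fix $\psi \in \cQ_{\sym}$ and $v \in C^0_{\sym}(\Bspace)$. The key point is that for small $|t|$, $\psi + tv$ can be connected to $\cQ_{\sym}$: more precisely, I would consider the one-parameter family $\psi_t := (1-t)\psi + t\psi'$ for $\psi' \in \cQ_{\sym}$, so that $\psi_t - \psi = t(\psi' - \psi)$ and $\psi' - \psi$ ranges over differences of symmetric $c$-convex functions as $\psi'$ varies; such differences are dense enough in $C^0_{\sym}(\Bspace)$ for the argument (indeed, adding constants and using that $\cQ_{\sym}$ spans, one can realize any $v$ up to the relevant cone of directions). By convexity of $F$, the function $t \mapsto F(\psi_t)$ is convex on $[0,1]$, so it has a minimum at an interior point only if its derivative there vanishes; and $\psi$ minimizes $F$ over $\cQ_{\sym}$ iff $\frac{d}{dt}\big|_{t=0^+} F(\psi_t) \ge 0$ for every $\psi' \in \cQ_{\sym}$. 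By Corollary~\ref{cor:endiff} applied with $v = \psi' - \psi$, together with linearity of the $\nu$-term,
\[
  \frac{d}{dt}\bigg|_{t=0} F(\psi_t) = -\int_\Aspace (\psi'-\psi)\circ\partial^c\psi^c\,d\mu + \int_\Bspace (\psi'-\psi)\,d\nu = \int_\Bspace (\psi'-\psi)\,d(\nu - \nu_\psi),
\]
using Definition~\ref{defi:MA} for the first integral. Hence $\psi$ minimizes $F$ iff $\int_\Bspace (\psi'-\psi)\,d(\nu-\nu_\psi) \ge 0$ for all $\psi'\in\cQ_{\sym}$. Since $\cQ_{\sym}$ is invariant under adding real constants, taking $\psi' = \psi \pm c$ forces $\nu$ and $\nu_\psi$ to have the same (finite) total mass — which we already know, both being $|\Aspace|$ — and then the inequality for all $\psi'$ in the convex cone $\cQ_{\sym}$, together with the fact that $\cQ_{\sym} - \cQ_{\sym}$ contains a neighborhood of $0$ in a suitable space of symmetric functions, forces $\int_\Bspace w\,d(\nu-\nu_\psi) = 0$ for $w$ in a total subset of $C^0_{\sym}(\Bspace)$; since both $\nu$ and $\nu_\psi$ are symmetric measures, this gives $\nu = \nu_\psi$.

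The main obstacle I anticipate is the density/spanning issue: one must be careful that the directions $\psi' - \psi$ with $\psi' \in \cQ_{\sym}$ (or their closed linear span, or the generated cone) are rich enough in $C^0_{\sym}(\Bspace)$ to conclude $\nu = \nu_\psi$ from the variational inequality, rather than merely equality of the two measures when paired against $c$-convex test functions. One clean way around this is to observe that $\cQ_{\sym}$ contains $\psi + tv$ whenever $v$ is, say, a symmetric function of the form $w^c$-type or a small smooth perturbation keeping $c$-convexity — but in fact the cleanest route is: both $\nu$ and $\nu_\psi$ are positive symmetric measures of equal mass, so it suffices to test against a family of symmetric functions separating measures, and since $\cQ_{\sym}$ contains all functions of the form $(\text{symmetric PL function})^{cc}$ and in particular enough functions to separate points of $\Bspace$, standard Stone–Weierstrass-type reasoning closes the gap. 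The "easy" direction (minimizer $\Rightarrow$ solution) is exactly the differentiability input, already packaged in Corollary~\ref{cor:endiff}; the converse direction (solution $\Rightarrow$ minimizer) is immediate from convexity of $F$ once the first-variation formula is in hand.
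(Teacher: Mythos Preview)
Your convexity argument and the use of Corollary~\ref{cor:endiff} are correct, and the direction ``$\nu_\psi=\nu\Rightarrow\psi$ minimizes'' follows cleanly from convexity plus vanishing first variation (the paper proves this direction by a direct pointwise inequality instead, but your route is fine).

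The gap is in the other direction, and you have correctly identified it yourself without resolving it. You restrict the test directions to $v=\psi'-\psi$ with $\psi'\in\cQ_{\sym}$, obtaining only the one-sided inequality
\[
  \int_\Bspace(\psi'-\psi)\,d(\nu-\nu_\psi)\ge 0\quad\text{for all }\psi'\in\cQ_{\sym}.
\]
To deduce $\nu=\nu_\psi$ you would need, for each symmetric $w$, both $\psi+w$ and $\psi-w$ to lie in (or be approximable from) $\cQ_{\sym}$; equivalently, that $\cQ_{\sym}-\cQ_{\sym}$ is dense in $C^0_{\sym}(\Bspace)$. This is not established anywhere in the paper, your Stone--Weierstrass sketch is not a proof, and the claim that $\cQ_{\sym}-\cQ_{\sym}$ contains a neighborhood of $0$ is unsupported. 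Note also that $\cQ_{\sym}$ is not a cone, so boundedness-below of the linear functional does not automatically force it to vanish.

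The paper sidesteps this entirely. Since Corollary~\ref{cor:endiff} already holds for \emph{arbitrary} $v\in C^0_{\sym}(\Bspace)$, one sets
\[
  f(t):=\int_\Aspace(\psi+tv)^c\,d\mu+\int_\Bspace(\psi+tv)\,d\nu
\]
and shows $f(t)\ge f(0)$ via the projection trick: $(\psi+tv)^{cc}\in\cQ_{\sym}$, $(\psi+tv)^{ccc}=(\psi+tv)^c$, and $(\psi+tv)^{cc}\le\psi+tv$, so $f(t)\ge F((\psi+tv)^{cc})\ge F(\psi)=f(0)$. Then $f'(0)=0$ gives $\int_\Bspace v\,d\nu_\psi=\int_\Bspace v\,d\nu$ for every $v\in C^0_{\sym}(\Bspace)$, and symmetry of both measures finishes. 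This is the missing idea in your argument.
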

\begin{proof}
  First suppose $\nu=\nu_\psi$. For any $\psi'\in\cQ_{\sym}$ we then have
  \begin{equation*}
    F(\psi')=\int_A(\psi^{\prime c}+\psi'\circ(\partial^c\psi^c))\,d\mu.
  \end{equation*}
  For almost every $m\in\Aspace$, we have
  \begin{equation*}
    \psi^{\prime c}(m)+\psi'((\partial^c\psi^c)(m))
    \ge\langle m,(\partial^c\psi^c)(m)\rangle
    =\psi^c(m)+\psi((\partial^c\psi^c)(m)),
  \end{equation*}
  and it follows that $F(\psi')\ge F(\psi)$, so that $\psi$ is a minimizer for $F$.

  Conversely, suppose that $\psi\in\cQ_{\sym}$ is a minimizer for $F$, and let us show that $\nu_\psi=\nu$.
  We must prove that $\int_{\Bspace}v\,d\nu_\psi=\int_{\Bspace}v\,d\nu$ for all $v\in C^0(\Bspace)$. As $\nu$ and $\nu_\psi$ are both symmetric, it suffices to establish this for $v\in C^0_{\sym}(\Bspace)$. Indeed, the function $\bar v=\frac1{|G|}\sum_{g\in G}v\circ g$ is symmetric, and we have  $\int_{\Bspace}v\,d\nu_\psi=\int_{\Bspace}\bar v\,d\nu_\psi$, 
$\int_{\Bspace}v\,d\nu=\int_{\Bspace}\bar v\,d\nu$.

Thus suppose $v\in C^0_{\sym}(\Bspace)$, and consider the function on $\R$ defined by
\[
  f(t):=\int_{\Aspace}(\psi+tv)^c\,d\mu+\int_{\Bspace}(\psi+tv)\,d\nu.
\]
 It follows from Corollary~\ref{cor:endiff} that $f(t)$ is differentiable at $t=0$, with
\begin{equation*}
  f'(0)=-\int_{\Bspace}v\,d\nu_\psi+\int_{\Bspace}v\,d\nu,
\end{equation*}
so we are done if we can prove that $f(t)$ has a (global) minimum at $t=0$.

Now $(\psi+tv)^{cc}\in\cQ_{\sym}$, and $(\psi+tv)^{ccc}=(\psi+tv)^c$, whereas $(\psi+tv)^{cc}\le\psi+tv$. Thus
  \begin{equation*}
    f(t)\ge\int_{\Aspace}(\psi+tv)^c\,d\mu+\int_{\Bspace}(\psi+tv)^{cc}\,d\nu
    =F((\psi+tv)^{cc})\ge F(\psi)=f(0),
  \end{equation*}
completing the proof.
\end{proof}
%
%
\subsection{Existence and uniqueness}\label{sec:ExistenceUniqueness}
We will prove
\begin{thm}\label{thm:MA}
  For any symmetric positive measure $\nu$ on $\Bspace$ of total mass $|\Aspace|$, there exists a function $\psi\in\cQ_{\sym}$, such that $\nu_\psi=\nu$. Moreover, $\psi$ is unique up to an additive constant, and the map $\psi \mapsto \nu_{\psi}$ is a homeomorphism from $\mathcal{Q}_{\sym} / \R$ to the space $\cM_{\sym}(\Bspace)$ of positive, symmetric measures of mass $|A|$.
\end{thm}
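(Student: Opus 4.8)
The statement bundles three things — existence, uniqueness up to a constant, and the topological claim — and the plan is to prove them in that order, using the variational characterization of Lemma~\ref{lem:varappr}, the chart description of $c$-subgradients, and the continuity in Proposition~\ref{prop:MAcont}. Throughout, $F=F_\nu$ is the functional of Lemma~\ref{lem:varappr}.

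\emph{Existence.} Since $(\psi+c)^c=\psi^c-c$ for a constant $c$ and $\mu(\Aspace)=\nu(\Bspace)=|\Aspace|$, one has $F(\psi+c)=F(\psi)$, so $F$ descends to a function on $\cQ_{\sym}/\R$. This function is continuous: the $c$-transform is $1$-Lipschitz, so $\psi\mapsto\int_\Aspace\psi^c\,d\mu$ is continuous (indeed Lipschitz), and $\psi\mapsto\int_\Bspace\psi\,d\nu$ is obviously continuous. By Corollary~\ref{cor:compact}, $\cQ_{\sym}$ is a closed subspace of $C^0(\Bspace)$ and $\cQ_{\sym}/\R$ is compact, so $F$ attains its minimum at some $\psi\in\cQ_{\sym}$, and Lemma~\ref{lem:varappr} gives $\nu_\psi=\nu$.

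\emph{Uniqueness.} Let $\psi_0,\psi_1\in\cQ_{\sym}$ both satisfy $\nu_{\psi_i}=\nu$; by Lemma~\ref{lem:varappr} both minimize $F$, so $F(\psi_0)=F(\psi_1)=\min F$. The midpoint $\psi_{1/2}:=\tfrac12(\psi_0+\psi_1)$ lies in $\cQ_{\sym}$ by Corollary~\ref{cor:PQconvex}. From the pointwise inequality $\psi_{1/2}^c\le\tfrac12\psi_0^c+\tfrac12\psi_1^c$ (a supremum of averages is at most the average of the suprema) and $\psi_{1/2}=\tfrac12(\psi_0+\psi_1)$ we get $F(\psi_{1/2})\le\tfrac12F(\psi_0)+\tfrac12F(\psi_1)=\min F$, hence $F(\psi_{1/2})=\min F$ and the pointwise inequality is an equality $\mu$-a.e.\ on $\Aspace$. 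At such a point $m$, a short computation shows that any $n^{\ast}\in\Bspace$ achieving $\psi_{1/2}^c(m)=\langle m,n^{\ast}\rangle-\psi_{1/2}(n^{\ast})$ must lie in $\partial^c\psi_0^c(m)\cap\partial^c\psi_1^c(m)$. By Lemma~\ref{lem:cgrad3} applied to $\psi_0^c,\psi_1^c\in\cP_{\sym}$, these $c$-subgradients are singletons for a.e.\ $m$, so $\partial^c\psi_0^c=\partial^c\psi_1^c$ $\mu$-a.e. Restricting to the open stars $S_i^\circ$ and open simplices $\sigma_j^\circ$, which cover $\Aspace$ up to a null set, Lemma~\ref{lem:stuff} together with the convexity in charts (Lemma~\ref{lem:cstar}) identifies, in the coordinates $p_{i,j}$, each of $\partial^c\psi_0^c$ and $\partial^c\psi_1^c$ with the a.e.-defined gradient of a convex function on a connected open subset of $\R^d$. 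Two convex functions with a.e.\ equal gradients differ by a constant, so $\psi_0^c-\psi_1^c$ is locally constant on $\Aspace_0$; since $\Aspace_0$ is connected ($\Aspace\setminus\Aspace_0$ has codimension $2$) and $\psi_0^c-\psi_1^c$ is continuous on $\Aspace$, there is a constant $c$ with $\psi_0^c=\psi_1^c+c$. Taking $c$-transforms and using $\psi_i^{cc}=\psi_i$ on $\Bspace$ (Lemma~\ref{lem:2c}), we conclude $\psi_0=\psi_1-c$.

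\emph{The homeomorphism.} From Definition~\ref{defi:MA} and $(\psi+c)^c=\psi^c-c$ we have $\nu_{\psi+c}=\nu_\psi$, so $\psi\mapsto\nu_\psi$ descends to a map $\Phi\colon\cQ_{\sym}/\R\to\cM_{\sym}(\Bspace)$, which is surjective by the existence statement and injective by the uniqueness statement. By Proposition~\ref{prop:MAcont}, $\psi\mapsto\nu_\psi$ is continuous from the sup-norm topology on $\cQ_{\sym}$ to the weak topology on $\cM_{\sym}(\Bspace)$ (both are metrizable, so sequential continuity suffices), hence $\Phi$ is continuous by the universal property of the quotient topology. As $\cQ_{\sym}/\R$ is compact (Corollary~\ref{cor:compact}) and $\cM_{\sym}(\Bspace)$ is Hausdorff for the weak topology, the continuous bijection $\Phi$ is a homeomorphism.

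\emph{Where the difficulty lies.} The genuinely analytic work — the differentiability of $\psi\mapsto\int_\Aspace\psi^c\,d\mu$ (Corollary~\ref{cor:endiff}) and the identification of $c$-subgradients with ordinary subgradients in charts (Lemmas~\ref{lem:cstar} and~\ref{lem:stuff}) — is already in place, so the remaining effort is concentrated in the uniqueness step: passing from ``$\partial^c\psi_0^c=\partial^c\psi_1^c$ a.e.'' to ``$\psi_0^c-\psi_1^c$ globally constant'' requires patching the chart-by-chart conclusions across overlaps of the $S_i^\circ$ and $\sigma_j^\circ$ and using connectedness of $\Aspace_0$. Existence and the homeomorphism assertion are then soft compactness arguments.
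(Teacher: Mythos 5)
Your proof is correct and follows essentially the same route as the paper: existence via minimizing $F_\nu$ on the compact quotient $\cQ_{\sym}/\R$, uniqueness via strict convexity of the midpoint trick forcing $\partial^c\psi_0^c=\partial^c\psi_1^c$ a.e.\ and then passing to gradients of convex functions in the charts of Lemma~\ref{lem:stuff}, and the homeomorphism from a continuous bijection between a compact space and a Hausdorff space. The only cosmetic difference is that you patch the locally constant difference $\psi_0^c-\psi_1^c$ using connectedness of $\Aspace_0$, whereas the paper uses density of $\bigcup_iS_i^\circ$ together with continuity; both are fine.
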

\begin{proof}
  We use Lemma~\ref{lem:varappr}. To prove existence of a solution, it suffices to show that the functional $F_\nu$ admits a minimizer on $\cQ_{\sym}$. But as $\phi\mapsto\phi^c$ is Lipschitz continuous, one sees that $F$ is Lipschitz continuous. It is also translation invariant, so the existence of a minimizer follows from compactness of $\cQ_{\sym}/\R$, see Corollary~\ref{cor:compact}.

We now show uniqueness. It suffices to prove that if $\psi_0,\psi_1\in\cQ_{\sym}$ are two minimizers of $F$, normalized by $\int\psi_i\,d\nu=0$, $i=0,1$, then $\psi_0=\psi_1$. 
  Set $\psi:=\frac12(\psi_0+\psi_1)$. Then $\psi\in\cQ_{\sym}$, by convexity of $\cQ_{\sym}$, and we have $\int_\Bspace\psi\,d\nu=0$. Now
  \[
    \psi^c=\sup_{n\in\Bspace}(n-\psi(n))
    \le\frac12\sup_{n\in\Bspace}(n-\psi_0(n))+\frac12\sup_{n\in\Bspace}(n-\psi_1(n))
    =\frac12(\psi_0^c+\psi_1^c),
  \]
  pointwise on $\Aspace$. As $\int_\Bspace\psi\,d\nu=0$, this leads to 
  \[
    F_\nu(\psi)
    =\int_\Aspace\psi^c\,d\mu
    \le\frac12\int_\Aspace\psi_0^c\,d\mu+\frac12\int_\Aspace\psi_1^c\,d\mu
    =\min_{\cQ_{\sym}}F_\nu.
  \]
  Thus equality holds, so since $\psi_0^c,\psi_1^c,\psi^c$ are continuous, we must have $\psi^c=\frac12(\psi_0^c+\psi_1^c)$. For a.e.\ $m\in\Aspace$, $\psi_0^c$, $\psi_1^c$ and $\psi^c$ all admit a c-gradient at $m$. If we set $n:=(\partial^c\psi^c)(m)$, then 
  \begin{align*}
    \psi^c(m)
    &=\langle m,n\rangle-\psi(n)\\
    &=\frac12(\langle m,n\rangle-\psi_0(n))+\frac12(\langle m,n\rangle-\psi_1(n))\\
    &\le\frac12\psi_0^c(m)+\frac12\psi_1^c(m)\\
    &=\psi^c(m).
  \end{align*}
  Thus $\psi_i^c(m)=\langle m,n\rangle-\psi_i(n)$ for $i=1,2$, so we must have
  \[
    (\partial^c\psi_0^c)(m)=(\partial^c\psi_1^c)(m)=n
  \]
  for a.e.\ $m$.
  We may assume $m$ lies in some open star $S_i^\circ$. Pick any $j\ne i$. By Lemma~\ref{lem:stuff}, the convex functions $(\psi_0^c-m_j)\circ p_{j,i}$ and $(\psi_1^c-m_j)\circ p_{j,i}$ on $p_{j,i}^{-1}(S_i^\circ)$ have the same gradient at a.e.\ point. By Lemma~\ref{lem:convex} below, these two functions differ by an additive constant, so $\psi_0^c-\psi_1^c$ is constant on $S_i^\circ$. By continuity of the elements of $\cP$ and density of $\bigcup_i S_i^\circ$ in $\Aspace$, we get that $\psi_0^c-\psi_1^c$ is constant on $\Aspace$. It follows that $\psi_0-\psi_1$ is also constant, and hence zero, by our normalization.

  It follows that the Monge--Amp\`ere operator $\psi\mapsto\nu_\psi$ defines a bijection between the compact Hausdorff spaces $\cQ_{\sym}/\R$ and $\cM_{\sym}(\Bspace)$. By Proposition~\ref{prop:MAcont}, this bijection is continuous, and hence a homeomorphism.
\end{proof} 
\begin{lem}\label{lem:convex}
  If $\Omega\subset\R^n$ is open and convex, and $u_0,u_1$ are convex functions on $\Omega$ such that $\nabla u_0=\nabla u_1$ a.e.\ on $\Omega$, then $u_0-u_1$ is constant on $\Omega$.
\end{lem}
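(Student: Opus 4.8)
The plan is to reduce to the elementary fact that a $C^1$ function with identically vanishing gradient on a connected open set is constant, by passing to mollifications. First I would record the regularity we need: a finite convex function on the open convex set $\Omega$ is locally Lipschitz, hence continuous and in $W^{1,1}_{\mathrm{loc}}(\Omega)$, and its weak gradient is represented a.e.\ by the a.e.-defined classical gradient $\nabla u_i\in L^\infty_{\mathrm{loc}}(\Omega)$; the hypothesis says $\nabla u_0=\nabla u_1$ a.e.\ on $\Omega$.

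Next, fix a standard mollifier $(\rho_\varepsilon)_{\varepsilon>0}$ and, for small $\varepsilon$, set $u_{i,\varepsilon}:=u_i*\rho_\varepsilon$ on the set $\Omega_\varepsilon:=\{x\in\Omega\mid\dist(x,\partial\Omega)>\varepsilon\}$, which is open, convex, and (for $\varepsilon$ small, assuming $\Omega\neq\emptyset$) nonempty, hence connected. Since mollification commutes with weak differentiation, $\nabla u_{i,\varepsilon}=(\nabla u_i)*\rho_\varepsilon$ on $\Omega_\varepsilon$, and because the convolution only sees the a.e.\ values of $\nabla u_i$ we get $\nabla u_{0,\varepsilon}\equiv\nabla u_{1,\varepsilon}$ on $\Omega_\varepsilon$. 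Thus the smooth function $u_{0,\varepsilon}-u_{1,\varepsilon}$ has vanishing gradient on the connected set $\Omega_\varepsilon$, so it equals a constant $c_\varepsilon$ there. Finally, continuity of $u_i$ gives $u_{i,\varepsilon}\to u_i$ locally uniformly on $\Omega$; fixing $x_0\in\Omega$, we get $c_\varepsilon=u_{0,\varepsilon}(x_0)-u_{1,\varepsilon}(x_0)\to u_0(x_0)-u_1(x_0)=:c$, and then for any $x\in\Omega$, passing to the limit in $u_{0,\varepsilon}(x)-u_{1,\varepsilon}(x)=c_\varepsilon$ yields $u_0(x)-u_1(x)=c$. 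Hence $u_0-u_1\equiv c$ on $\Omega$.

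There is no serious obstacle here; the one point that genuinely needs care is that one cannot directly integrate $\nabla(u_0-u_1)=0$ along a segment joining two points of $\Omega$, since the a.e.-zero set of the gradient could contain that entire segment. Mollification sidesteps this cleanly; alternatively one could average the one-dimensional computation over a cone of directions and use Fubini to produce, for a dense set of endpoints, a segment meeting the bad set in one-dimensional measure zero, and then invoke continuity of $u_0-u_1$. The remaining verifications — that mollification commutes with the weak gradient, and that $u_{i,\varepsilon}\to u_i$ locally uniformly — are routine.
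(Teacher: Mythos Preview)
Your mollification argument is correct and complete; the reduction to the smooth connected case is clean, and you have correctly identified the one genuine subtlety (that one cannot simply integrate along a single segment). The paper, however, takes precisely the alternative route you sketch at the end: it fixes a point $x_0$, uses Fubini on the unit sphere to find, for almost every direction $v$, that the radial segment $t\mapsto x_0+tv$ meets the bad set in one-dimensional measure zero, applies the fundamental theorem of calculus to the one-variable convex (hence absolutely continuous) restrictions $f_i(t)=u_i(x_0+tv)$ to get $f_0=f_1$ on that segment, and then passes to all of a ball by continuity. Your approach trades this hands-on slicing for the standard mollification toolkit and the identification of the weak and a.e.\ classical gradients for locally Lipschitz functions; it is arguably more conceptual and avoids the one-dimensional absolute continuity citation, at the cost of invoking a bit more Sobolev-space machinery. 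Either way the content is the same elementary fact, and your observation that $\Omega_\varepsilon$ is convex (hence connected) is exactly what makes the mollified version go through globally in one step rather than locally on balls.
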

\begin{proof}
  Let $E\subset\Omega$ be the set of points where $\nabla u_0=\nabla u_1$. 
  Pick any point $x_0\in\Omega$. After adding a constant to $u_1$, we may assume $u_0(x_0)=u_1(x_0)$. Pick $r>0$ such that $B(x_0,2r)\subset\Omega$. It suffices to prove that $u_0=u_1$ on $B(x_0,r)$. Fubini's theorem implies that for almost every point $v$ on the unit sphere in $\R^n$, we have $x_0+tv\in E$ for almost every $t\in(-r,r)$. For such $v$ it follows that the convex functions $f_i(t):=u_i(x_0+tv)$ on $(-r,r)$ satisfy $f'_0(t)=f'_1(t)$ for a.e.\ $t$. As $f_0(0)=f_1(0)$, this implies that $f_0=f_1$, see~\cite[Theorem~3.35]{Folland}. Thus $u_0(x_0+tv)=u_1(x_0+tv)$ for almost every $v$ and all $t\in(-r,r)$. By continuity, we see that $u_0=u_1$ on $B(x_0,r)$.
\end{proof}

\begin{proof}[Proof of Theorem~B]
It is clear from Theorem~\ref{thm:MA} and Corollary~\ref{cor:MAincoords} that $\psi\mapsto\nu_\psi$ satisfies all the properties stated in Theorem~B. Now let $\psi\mapsto\nu'_\psi$ be a continuous map from $\cQ_{\sym}$ to $\cM_{\sym}$ such that $\nu'_\psi|_{\tau_i^\circ}=\MAR(\psi|_{\tau_i^\circ})$ for every $i$. Thus $\nu'_\psi=\nu_\psi$ for all $\psi\in\cQ_{\sym}$ such that $\nu_\psi$ puts full mass on $\bigcup_i\tau_i^\circ$. But it follows from Theorem~\ref{thm:MA} that the set of such functions is dense in $\cQ_{\sym}$; indeed, the set of measures on $\cM_{\sym}$ putting full mass on $\bigcup_i\tau_i^\circ$ is dense. The result follows.
\end{proof}

\begin{rmk}\label{rmk:regsol}
  It is of interest to the SYZ conjecture to investigate the regularity of the solution $\psi$ when $\nu$ is Lebesgue measure on $\Bspace$, using classical and more recent results, see~\cite{Figalli,Moo15,Moo21,MR22}. We hope to address this in future work.
\end{rmk}

%
%
%
%
\section{Induced metrics on the Berkovich projective space}\label{sec:NAmetrics}
Here we define a procedure that to a symmetric c-convex function on $\Bspace$ associates a symmetric toric continuous psh metric on the Berkovich analytification of $\cO_{\P^{d+1}}(d+2)$.
As before, $K$ is any non-Archimedean field.
%
%
\subsection{Continuous psh functions and metrics}
The study of continuous psh (or semipositive) metrics in non-Archimedean geometry goes back to Zhang~\cite{Zha95} and Gubler~\cite{GublerLocal}, who defined the notion of a continuous psh metric on the analytification of an ample line bundle on a projective variety defined over a nontrivially valued non-Archimedean field $K$.
This theory is global in nature.

More recently, a local theory was developed by Chambert-Loir and Ducros~\cite{CLD}. Given any non-Archimedean field $K$, any $K$-analytic space\footnote{All $K$-analytic spaces will be assumed good and boundaryless.} $Z$, can be endowed with a sheaf of continuous psh functions. For example, if 
$f_1,\dots,f_n$ are invertible analytic functions on $Z$, $\Omega\subset\R^n$ is an open subset such that $(\log|f_1(z)|,\dots,\log|f_n(z)|)\in\Omega$ for all $z\in Z$, and $\chi\colon\Omega\to\R^n$ is a convex function, then the function $z\mapsto\chi(\log|f_1(z)|,\dots,\log|f_n(z)|)$ is a continuous psh function on $Z$. A general continuous psh function is locally a uniform limit of functions of this type.

If $L$ is a line bundle (in the analytic sense) on $Z$, then a continuous metric on $L$ in the `multiplicative' sense, is a continuous function $\|\cdot\|$ on the total space of $L$ with values in $\R_{\ge0}$, and a suitable homogeneity property along the fibers of $L\to Z$. We say that $\|\cdot\|$ is semipositive if for some (equivalently, any) local analytic section $s\colon Z\to L$, the continuous function $-\log\|s\|$ on $Z$ is psh.
It will be natural for us to instead use `additive' terminology, and view a continuous metric on $L$ as an $\R$-valued function $\Psi=-\log\|\cdot\|$ on the total space with the zero section removed. If $\Psi$ is a continuous metric on $L$ and $s$ is a nonvanishing section of $L$ over an open set $U\subset Z^\an$,  then we can view $\Psi-\log|s|$ as a continuous function on $U$, and we say that $\Psi$ is psh if $\Psi-\log|s|$ is psh on $U$; this is equivalent to $\|\cdot\|=\exp(-\Psi)$ being semipositive.

In fact, the global notion in~\cite{GublerLocal,BE21} is a priori stronger. Let $X$ be a projective variety, and $L$ an ample line bundle on $L$. Then a continuous metric $\Psi$ on $L^\an$ is globally psh if it can be uniformly approximated by \emph{Fubini--Study metrics}, \ie metrics of the form
\begin{equation}\label{eq:FSMetrics}
  \Psi=\frac1r\max_{1\le j\le N}(\log|s_j|+\lambda_j),
  \end{equation}
where $r\ge 1$, $s_j\in\Hnot(X,rL)$ are global nonzero sections with no common zero, and $\lambda_j\in\R$. Such a metric is continuous psh in the sense above, and we shall only consider globally psh metrics.
%
%
\subsection{Monge--Amp\`ere measures}
To any continuous psh function $\f$ on a pure-dimensional $K$-analytic space $Z$ is associated a `non-Archimedean' Monge--Amp\`ere measure $\MANA(\f)$, a positive Radon measure on $Z$. We refer to~\cite{CLD} for the definition, but note that the Monge--Amp\`ere operator is continuous under locally uniform convergence.

If $L$ is an analytic line bundle on $Z$ and $\Psi$ is a continuous psh metric on $L$, then the Monge--Amp\`ere measure $\MANA(\Psi)$ is a well-defined positive Radon measure on $Z$ with the following property:  for any nonvanishing section $s$ of $L$ over some open subset $U\subset Z$, we have $\MANA(\Psi)|_U=\MANA((\Psi-\log|s|)|_U)$. In `multiplicative' notation, this measure is written $c_1(L,\|\cdot\|)^d$, where $d=\dim Z$ and $\|\cdot\|=\exp(-\Psi)$, as first introduced by Chambert--Loir~\cite{CL06}.

In this paper, all computations involving the non-Archimedean Monge--Amp\`ere measure will be deduced from the following result, essentially due to Vilsmeier~\cite{Vilsmeier}.
\begin{lem}\label{lem:Vils}
  Let $T\simeq\Gm^n$ be a split torus, with tropicalization map $\trop\colon T^\an\to N_{T,\R}$. Let $\Omega\subset N_{T,\R}\simeq\R^n$ be an open subset, and $g\colon\Omega\to\R$ a convex function. Then the composition $g\circ\trop\colon\trop^{-1}(\Omega)\to\R$ is a continuous psh function, and we have
  \begin{equation*}
    \MA_{\NA}(g\circ\trop)=n!\MAR(g)
  \end{equation*}
  on $\trop^{-1}(\Omega)$, where the left-hand side denotes the non-Archimedean Monge--Amp\`ere measure on $\trop^{-1}(\Omega)$, and the right-hand side denotes the real Monge--Amp\`ere measure on $\Omega\subset\trop^{-1}(\Omega)$.
\end{lem}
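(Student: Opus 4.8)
The plan is to reduce to the known local structure of toric non-Archimedean Monge--Amp\`ere measures by a standard approximation argument. First I would handle the case where $g$ is a (finite) maximum of affine functions with integral linear part, say $g = \max_{1\le j\le N}(\ell_j + c_j)$ with $\ell_j \in M_T$ and $c_j \in \R$. In this case $g\circ\trop = \max_j(\log|z^{\ell_j}| + c_j)$ on $\trop^{-1}(\Omega)$, where $z^{\ell_j}$ are the corresponding monomials on $T$; locally this is a Fubini--Study type metric expression, and one can invoke the known formula for the Chambert--Loir measure of such a metric on a toric variety (as in~\cite{CLD}, or via the fact that such a metric extends to a model metric on a toric model). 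The upshot is that $\MA_{\NA}(g\circ\trop)$ is a finite sum of Dirac-type contributions on the codimension-zero cells of the polyhedral decomposition of $\Omega$ induced by $\max_j \ell_j$, and a direct comparison with the definition of $\MAR(g)$ as Lebesgue measure of the subgradient image shows the two sides differ exactly by the factor $n!$ coming from the normalization of lattice volume (the volume of a fundamental domain of $N_T$ versus the combinatorial count of lattice points / multiplicities). This factor $n!$ is the same one that appears in the toric formula $L^n = n!\,\vol(\Delta)$, and tracking it carefully is the one genuinely bookkeeping-heavy point.

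Next I would extend to general convex $g$ by approximation. Any convex function $g$ on an open convex subset $\Omega$ is a locally uniform limit, on relatively compact open subsets, of functions $g_k$ of the form $\max_j(\ell_j^{(k)} + c_j^{(k)})$ with \emph{rational} (hence, after clearing denominators, integral on a sublattice) linear parts; by a further rescaling argument one reduces to integral linear parts at the cost of passing to a finite index sublattice, which changes both sides of the identity by the same factor and so is harmless. On one side, $g_k\circ\trop \to g\circ\trop$ locally uniformly on $\trop^{-1}(\Omega)$, and the non-Archimedean Monge--Amp\`ere operator is continuous under locally uniform convergence of continuous psh functions~\cite{CLD}; hence $\MA_{\NA}(g_k\circ\trop) \to \MA_{\NA}(g\circ\trop)$ weakly. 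On the other side, $\MAR(g_k) \to \MAR(g)$ weakly by the classical stability of the real Monge--Amp\`ere operator under local uniform convergence of convex functions (this is essentially Lemma~\ref{lem:convdir} applied to the subgradient maps, together with the change-of-variables description of $\MAR$). Passing to the limit in the identity $\MA_{\NA}(g_k\circ\trop) = n!\,\MAR(g_k)$ gives the claim. That $g\circ\trop$ is continuous psh in the first place is exactly the sample class of continuous psh functions recalled in~\S\ref{sec:NAmetrics} (take the invertible analytic functions to be the coordinate monomials $z^{e_1},\dots,z^{e_n}$ on $T$), so this needs no separate argument.

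The main obstacle is not the approximation scheme, which is routine, but pinning down the constant $n!$ with the correct normalization conventions: the non-Archimedean Monge--Amp\`ere measure of a piecewise-linear toric metric assigns to each vertex of the dual subdivision the mixed-lattice-volume of the corresponding cell of $\partial g$ (an integer, counted with lattice multiplicity), whereas $\MAR(g)$ assigns the Euclidean volume of that same cell; the ratio is $n!$ precisely because an integral simplex of lattice volume $1$ has Euclidean volume $1/n!$. I would make this precise by citing Vilsmeier~\cite{Vilsmeier} (or~\cite{BGJKM}) for the toric piecewise-linear case and then only need the two continuity statements above to conclude; alternatively, for the piecewise-linear case one can argue directly using a semistable snc toric model and the combinatorial formula for Chambert--Loir measures in terms of intersection numbers on the special fiber, but invoking~\cite{Vilsmeier} is cleaner. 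It is worth remarking that the identity is local on $\Omega$, so we may and do assume throughout that $\Omega$ is a bounded convex open set and $g$ is globally Lipschitz on it, which is what makes the uniform approximation by rational piecewise-linear convex functions available.
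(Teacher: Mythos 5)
Your argument is correct in outline, but it takes a different route from the paper's. The paper localizes around a point $t\in\Omega$, performs a ground field extension so that $K$ is algebraically closed with dense value group, exhibits an explicit formal model $\mathfrak{X}=\operatorname{Spf}(K^\circ\langle z_0,\dots,z_n\rangle/(z_0\cdots z_n-a))$ whose skeleton is a simplex containing $t$ in its interior, and then quotes Corollary~5.7 of \cite{Vilsmeier} directly for \emph{general} convex functions on such a skeleton; no piecewise-linear approximation appears in the paper's proof, because that work is already done inside \cite{Vilsmeier}. You instead unpack that argument: you establish the piecewise-linear case (where the identity reduces to the combinatorial formula for Chambert--Loir measures of toric model metrics and the normalization $L^n=n!\,\vol(\Delta)$) and pass to the limit using the continuity of both Monge--Amp\`ere operators under locally uniform convergence. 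Both routes are sound; yours is more self-contained but re-proves what the citation supplies, while the paper's is shorter at the cost of having to produce the right formal model.

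Two points in your write-up deserve attention. First, you nowhere perform the ground field extension, but you implicitly need it (or an equivalent density argument): for a trivially or discretely valued $K$, the constants $c_j$ in your piecewise-linear approximants need not lie in the value group, so $\max_j(\log|z^{\ell_j}|+c_j)$ is not a model metric over $K^\circ$, and the ``known formula'' for its Chambert--Loir measure has to be obtained by base change to a densely valued field---this is precisely the first step of the paper's proof. Second, a small slip: for piecewise-linear $g$ the Dirac masses of $\MAR(g)$ and of $\MA_{\NA}(g\circ\trop)$ sit at the \emph{vertices} (zero-dimensional cells) of the induced polyhedral decomposition of $\Omega$, not on its codimension-zero cells; your later description in terms of the dual subdivision is the correct one. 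Also, the reduction from rational to integral linear parts is more cleanly achieved via the degree-$n$ homogeneity of both operators (replace $g$ by $rg$) than by passing to a finite-index sublattice, which would introduce an isogeny of tori and a covering degree to track. None of these affects the validity of the strategy.
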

\begin{proof}
  We argue as in the proof of~\cite[Corollary~5.10]{Vilsmeier}. 
  By ground field extension, we may assume $K$ is algebraically closed and non-trivially valued, and in particular has dense value group. We may also assume $T=\Spec K[z_1^\pm,\dots,z_n^\pm]$ and $N_{T,\R}=\R^n$. The statement is local on $\Omega\subset\R^n$, so \corr{pick any point $\underline{t}=(t_1,\dots,t_n)\in\Omega$, and nonzero elements $a,b_1,\dots,b_n\in K$ such that the set
  \[
    \{\underline{s}=(s_1,\dots,s_n)\in\R^n\mid s_j\ge\log|b_j|, \log|a|^{-1}+\sum_j\log|b_j|\ge \sum_js_j\}
    \]
    is contained in $\Omega$ and contains $\underline{t}$ in its interior.} After performing the change of coordinates $z_j\mapsto b_jz_j$ we may assume $b_j=1$ for all $j$. Now consider the formal scheme
    \[
      \mathfrak{X}=\operatorname{Spf}(K^\circ
      \langle z_0,\dots,z_n\rangle/(z_0\dots z_n-a)).
    \]
    The generic fiber of $\mathfrak{X}$ is isomorphic to the Laurent domain $|z_j|\le1$, $\prod_j|z_j|\ge|a|$ in $T^\an$, and the skeleton $\Delta$ of $\mathfrak{X}$ is the simplex $\{s_j\ge0, \sum s_j\le\log|a|^{-1}\}$. \corr{As this simplex contains the point $\underline{t}$ in its interior, the result now follows from Corollary~5.7 in~\cite{Vilsmeier}.}
\end{proof}
\begin{rmk}
  Lemma~\ref{lem:Vils} can also be deduced from~\cite{BJKM21} which systematically studies pluripotential theory for tropical toric varieties, and its relation to complex and non-Archimedean pluripotential theory.
\end{rmk}
%
%
\subsection{The Fubini---Study operator}
We now return to the setup earlier in the paper.
There is a natural \emph{Fubini--Study operator} that associates to any continuous convex function $\phi\colon\Delta\to\R$ a continuous psh metric $\FS(\phi)$ on $\cO(d+2)^\an$, see for example Theorem~4.8.1 in~\cite{BPS}.\footnote{In~\cite{BPS}, the authors consider concave rather than convex functions on $\Delta$.}
It is characterized by the following two properties:
\begin{itemize}
\item
  $\phi\to\FS(\phi)$ is continuous;
\item
  if $\phi$ is $\Q$-PL, then for any sufficiently divisible $r\ge1$, we have 
  \[
    \FS(\phi)=\max_{m\in r\Delta\cap M}(r^{-1}\log|\chi^{r,m}|-\phi(r^{-1}m)).
  \]
\end{itemize}
Here $\phi$ is $\Q$-PL if it is the maximum of finitely many rational affine functions, \ie functions of the form $n+\lambda$, where $n\in N_\Q$ and $\lambda\in\Q$. Any continuous convex function on $\Delta$ is a uniform limit of $\Q$-PL convex functions, so the two conditions above completely determine the operator $\FS$.
%
%
\subsection{From c-convex functions to continuous psh metrics}
To any symmetric c-convex function $\psi\in\cQ_{\sym}$ we now associate a continuous psh metric on $\cO(d+2)^\an$. This metric, which we slightly abusively denote by $\FS(\psi)$, is defined by
\[
  \FS(\psi):=\FS(\psi^c|_\Delta),
\]
where we view the c-transform $\psi^c$ as a convex function on $M_\R$, defined using~\eqref{equ:ctransf2}. The map $\psi\mapsto\FS(\psi)$ is contractive, and equivariant for addition of constants.

The metric $\FS(\psi)$ is closely related to the canonical extension of $\psi$ to $N_\R$ in~\S\ref{sec:extend}. Indeed, if $m\in M\cap\Delta$, and $\chi^m := \chi^{1,m}\in\Hnot(\P^{d+1},\cO(d+2))$ is the corresponding section, then $\log|\chi^m|$ is a continuous metric on $\cO(d+2)^\an$ over $T^\an$. Then $\FS(\psi)-\log|\chi^m|$ is a continuous psh function on $T^\an$, and we have
\begin{equation}\label{equ:FStrop}
  \FS(\psi)-\log|\chi^m|=(\psi-m)\circ\trop,
\end{equation}
on $T^\an$, where $\trop\colon T^\an\to N_\R$ is the tropicalization map.

Lemma~\ref{lem:Vils} and~\eqref{equ:FStrop} allow us to compute the Monge--Amp\`ere measure of $\FS(\psi)$ on $T^\an$, but that is not what we want to do. Instead, we will consider the restriction of $\FS(\psi)$ to a Calabi--Yau hypersurface $X\subset\P^{d+1}$.
%
%
%
%
\section{Calabi--Yau hypersurfaces}\label{sec:CY}
From now on, $K=\C\lau{\unipar}$. Consider a hypersurface $X\subset\P_K^{d+1}$ of the form
\[
X=V(z_0\cdot\ldots\cdot z_{d+1}+\unipar f(z)),
\]
where $f(z)\in\C[z]$ is a homogeneous polynomial of degree $d+2$ such that for any intersection $Z$ of coordinate hyperplanes $z_j=0$ in $\P^{d+1}$, $f$ does not vanish identically on $Z$ and $V(f|_Z)$ is smooth. We call such a polynomial \emph{admissible}. The set of admissible polynomials determines an open in $
|\mathcal{O}_{\mathbb{P}^{d+1}}(d+2)|$, which is not empty as, for instance, the Fermat polynomial  $f_{d+2}(z)=\sum_{i=0}^{d+1} z_i^{d+2}$ is admissible. Moreover, $X$ is smooth for any admissible polynomial.

%
%

\subsection{Models and skeletons}
Let $Y$ be any smooth and proper variety over $K$. Given a scheme $\cY$ over $R:= \C[[\unipar]]$, we denote by $\cY_K$, respectively $\cY_0$, the base change of $\cY$ to $K$, respectively to the residue field of $R$. A model of $Y$ is a flat $R$-scheme $\cY$ such that $\cY_K \simeq Y$. We say that the model is strict normal crossing (snc), respectively divisorially log terminal (dlt), if the pair $(\cY, \cY_{0,\redu})$ is so; see~\cite[Definitions 1.7, 1.18]{Kol13} for more details.

Given any snc, more generally dlt, model $\cY$ of $Y$, we denote by $\mathcal{D}(\cY_0)$ the dual complex of $\cY_0$ (see~\cite[Definition 3.62]{Kol13}); this admits a canonical embedding $\mathcal{D}(\cY_0) \xrightarrow{\simeq} \Sk(\cY) \subset Y^\val$ in $Y^\val$, whose image is called the skeleton of $\cY$.

\subsection{The essential skeleton of $X$}\label{sec:KSskel}
Let $X^\an$ be the analytification of $X$, and $X^\val\subset X^\an$ the set of valuations on the function field of $X$. We have that $X^\an$ is a closed subset of $\P^{d+1,\an}$. 
Being the analytification of a Calabi--Yau variety over $\C\lau{\unipar}$, $X^\an$ admits a canonical subset $\Sk(X)\subset X^\val$, the \emph{essential skeleton}, defined in two equivalent ways. On one side, $\Sk(X)$ is the locus where a certain weight function on $X^\an$ takes its minimal values,~\cite{KS06,MN15}; on the other side, $\Sk(X)$ is the skeleton associated with any minimal dlt model of $X$,~\cite{NX16}.

In our case, $\Sk(X)$ can be concretely described as follows.
Consider the model
\[
\cX:=V(z_0\cdot\ldots\cdot z_{d+1}+\unipar f(z))\subset\P^{d+1}_R
\]
of $X$ over $R=\C\cro{\unipar}$. The special fiber $\cX_0$ is simply given by $V(z_0\cdot\ldots\cdot z_{d+1})\subset\P^{d+1}_\C$, and its dual complex $\mathcal{D}(\cX_0)$ is evidently the boundary of a $(d+1)$-dimensional simplex, hence  topologically a sphere. We have an anticontinuous specialization map $\operatorname{sp}_\cX\colon X^\an\to\cX_0$.

Now $\cX$ is smooth away from 
$\Sing(\cX)= \bigcup_{i \neq j} V(z_i,z_j,\unipar, f(z))$. 
Since $f$ is admissible, 
\begin{itemize}
    \item[-] for any $\xi \in  V(z_{i_1},\ldots, z_{i_m},\unipar, f(z)) \subset \Sing(\cX)$ for some maximal $m \in \{2,\ldots,d\}$,
    \'{e}tale locally around $\xi$ we have the isomorphism $$\cX \simeq V(x_1\cdot \ldots\cdot x_m - \unipar y) \subset \mathbb{A}^{m+2}_{x_i,\unipar, y} \times \mathbb{A}^{d-m};$$
    \item[-] $(\cX, \cX_0)$ is snc at the generic point of each stratum of the special fiber $\cX_0$.
\end{itemize}

It follows that locally around any singular point, $\cX$ is a toric subvariety of $\mathbb{A}^{d+2}$ and the special fiber $\cX_0$ consists of toric divisors; by~\cite[Proposition 11.4.24]{CLS} the pair $(\cX, \cX_0)$ is log canonical around the singular points of $\cX$. Finally, one can check that $\cX$ is dlt by considering the small resolution of $\cX$ obtained as blow-up of all but one irreducible component of the special fiber (see~\cite[\S 2.1, 4.2]{Kol13}), and is minimal since $K_\cX \sim \mathcal{O}_X$. We conclude that $\Sk(X)= \Sk(\cX) \simeq \mathcal{D}(\cX_0)$.

Let us be a bit more precise, and describe the embeddings of the $d$-dimensional faces of $\mathcal{D}(\cX_0)$ in $X^\val$. Such a face is determined by a zero-dimensional stratum of the special fiber, say the point $\xi_i$, where $z_j=0$ for $j\ne i$. At $\xi_i$, $\cX$ is smooth, the rational functions
\[
w_{i,j}=\frac{z_j}{z_i},\ j\ne i,
\]
form a coordinate system at $\xi_i$, and $u_i:=-z_i^{d+2}/f(z)$ is a unit at $\xi_i$. We can write
\begin{equation}\label{equ:strcoord1}
z^{m_i}=\prod_{j\ne i}w_{i,j}^{-1}=\unipar^{-1}u_i
\end{equation}
and, for $j\ne i$,
\begin{equation}\label{equ:strcoord2}
z^{m_j} =w_{i,j}^{d+1} \cdot \prod_{l\ne i,j}w_{i,l}^{-1}.
\end{equation}

\corr{
Given numbers $\lambda_j\in\R_{>0}$, $j\ne i$ with $\sum \lambda_j=1$, there exists a unique minimal valuation $v_{i,\lambda}$ on $\cO_{\cX,\xi_i}$ such that $v_{i,\lambda}(w_{i,j})=\lambda_j$ for all $i$. Then $v_{i,\lambda}$ defines a point in $X^\val$. With a bit of work, one shows that $\lambda\mapsto v_{i,\lambda}$ extends to a homeomorphism of the closed simplex $\{\sum_{j\ne i}\lambda_j=1\}\subset\R_{\ge0}^{\{0,\dots,d+1\}\setminus\{i\}}$ onto a compact subset $\tilde{\tau}_i$ of $X^\val$, Moreover, $\Sk(X)$ is a (non-disjoint) union of the $\tilde{\tau}_i$.

We equip each $\tilde{\tau}_i$ with an integral affine structure, in which the affine functions are integral linear combinations of $v\mapsto v(w_{i,j})$, $j\ne i$.

Let $U_i:=\operatorname{sp}_\cX^{-1}(\xi_i)\subset X^{\an}$ be the open subset of points specializing to the point $\xi_i$. Each point $v\in U_i$ lies in $T^\an$ as $f(z)$ is generic, and defines a semivaluation on $\cO_{\cX,\xi_i}$ such that $v(w_{i,j})>0$ for all $j\ne i$, and $\sum v(w_{i,j})=1$. There is a canonical retraction $r_i\colon U_i\to\tilde{\tau}_i^\circ$ defined by $r_i(v)=v_{i,\lambda}$ where $\lambda_j=v(w_{i,j})$.
}
%
%
\subsection{Tropicalization} \label{subsec:trop}
The complement $\P^{d+1,\an}\setminus T^\an$ consists of the hyperplanes $z_i=0$ and do not meet $X^\val$, so $X^\val\subset T^\an$. Note, however, $X^\val\cap T^\val=\emptyset$.

By definition, the tropicalization of $X\cap T$ (viewed as a subset of $T$ and with respect to the given valuation on the ground field), is the image
\[
(X\cap T)^{\trop}:=\trop(X^\an\cap T^\an)\subset N_\R.
\]
By the Fundamental Theorem of Tropical Geometry (FTTG for short, due to Kapranov in the case of hypersurfaces~\cite[Theorem~3.1.3]{MS15}), the tropicalization admits a different description. Namely, the Laurent polynomial $g(z)=1+\unipar\frac{f(z)}{z_0\cdot\ldots\cdot z_{d+1}}\in K[M]$ can be written as
\[
g(z)=1+\unipar\sum_{m\in\Delta}a_mz^m,
\]
where $a_m\in\C$ and $a_m\ne0$ whenever $m$ is a vertex of $\Delta$. Then we have $X\cap T=V(g)$. The tropicalization of $g$ is the convex, piecewise affine function on $N_\R$ given by 
\[
g^{\trop}(n)
=\max\{0,-1+\max_{m\in\Delta\cap M, a_m\ne0}\langle m,n\rangle\}
=\max\{0,-1+\max_{0\le i\le d+1}\langle m_i,n\rangle\},
\]
and the FTTG says that $(X\cap T)^{\trop}$ is the locus where the function $g^{\trop}$ fails to be locally affine. Its complement in $N_\R$ has a unique bounded component, namely the set
\[
\{n\in N_\R\mid \max_i\langle m_i,n\rangle<1\},
\]
whose closure is exactly the simplex $\Delta^\vee$ above. In particular,  $\Bspace=\partial\Delta^\vee\subset(X\cap T)^{\trop}$.
\begin{lem} \label{lem:tropX}
	The map $\trop\colon X^\an\cap T^\an\to(X\cap T)^{\trop}\subset N_\R$ 
	\begin{enumerate}
		\item induces a homeomorphism of $\tilde{\tau}_i$ onto $\tau_i$, and of $\Sk(X)$ onto $\Bspace$;
		\item fits in the commutative diagram \quad
		$\begin{tikzcd}[row sep=tiny]
		& {\tau_i^\circ} \\
		{U_i} \\
		& {\tilde{\tau_i}^\circ}
		\arrow["\trop", from=2-1, to=1-2]
		\arrow["{r_i}"', from=2-1, to=3-2]
		\arrow["\simeq","\trop"', from=3-2, to=1-2]
		\end{tikzcd}$
		\item in particular, satisfies  $U_i 
		:=\operatorname{sp}_\cX^{-1}(\xi_i)
		=\trop^{-1}(\tau_i^\circ)$;
		\item induces an isomorphism between the integral affine structures on $\tilde{\tau}_i$ and $\tau_i$.
	\end{enumerate}
\end{lem}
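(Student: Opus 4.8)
The plan is to derive all four assertions from the defining identity $\langle m,\trop(v)\rangle=-v(z^m)$ together with the monomial formulas~\eqref{equ:strcoord1}--\eqref{equ:strcoord2}; concretely I would first compute $\trop$ explicitly on the relevant pieces. Since $\{e_j-e_i\}_{j\ne i}$ is a $\Z$-basis of $M$ and $z^{e_j-e_i}=w_{i,j}$, every $z^m$ with $m\in M$ is a Laurent monomial in the $w_{i,j}$, so $\trop(v)$ is determined by the numbers $v(w_{i,j})$, $j\ne i$. Feeding~\eqref{equ:strcoord1}--\eqref{equ:strcoord2} into the defining identity gives, for any $v\in X^\an\cap T^\an$ with $\sum_{j\ne i}v(w_{i,j})=1$ (which by the description in~\S\ref{sec:KSskel} holds for all $v\in\tilde\tau_i$ and all $v\in U_i$),
\[
  \langle m_i,\trop(v)\rangle=1
  \qquad\text{and}\qquad
  \langle m_j,\trop(v)\rangle=1-(d+2)\,v(w_{i,j})\ \ (j\ne i).
\]
Comparing with~\eqref{equ:pairing}, this says precisely that $\trop(v)\in\Bspace$ lies in $\tau_i$ and equals $\sum_{j\ne i}v(w_{i,j})\,n_j$ in the parametrization~\eqref{equ:Bpar}.

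\emph{Assertion (1).} Taking $v=v_{i,t}$ yields $\trop(v_{i,t})=\sum_{j\ne i}t_j n_j$, which is the value at $t$ of the standard homeomorphism of the closed $d$-simplex onto $\tau_i$; since $t\mapsto v_{i,t}$ is also such a homeomorphism (onto $\tilde\tau_i$), $\trop$ restricts to a homeomorphism $\tilde\tau_i\simto\tau_i$ carrying $\tilde\tau_i^\circ$ onto $\tau_i^\circ$ and, more generally, the face $\{t_j=0:j\in S\}$ onto the face $\{\b_j=0:j\in S\}$. Thus $\trop$ is simplicial for the identifications exhibiting $\Sk(X)=\bigcup_i\tilde\tau_i$ as $\mathcal{D}(\cX_0)$ and $\Bspace=\bigcup_i\tau_i$ as $\partial\Delta^\vee$, and is bijective on faces of each dimension; being a continuous bijection from the compact space $\Sk(X)$ to the Hausdorff space $\Bspace$, it is a homeomorphism.

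\emph{Assertions (2) and (3).} For $v\in U_i$ the retraction value $r_i(v)=v_{i,t}$ with $t_j=v(w_{i,j})$ agrees with $v$ on each $w_{i,j}$, hence (by the first paragraph) on every $z^m$, $m\in M$; therefore $\trop(v)=\trop(r_i(v))$, which lands in $\tau_i^\circ$ since $v(w_{i,j})>0$ for $j\ne i$. This is the commutative triangle, with bottom arrow the homeomorphism from~(1), and gives $U_i\subseteq\trop^{-1}(\tau_i^\circ)$. Conversely, let $v\in X^\an\cap T^\an$ with $\trop(v)\in\tau_i^\circ$, and put $a_j:=v(w_{i,j})=\langle e_i-e_j,\trop(v)\rangle$ for $j\ne i$. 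Unwinding $\langle m_i,\trop(v)\rangle=1$ and $\langle m_j,\trop(v)\rangle<1$ through~\eqref{equ:strcoord1}--\eqref{equ:strcoord2} gives $\sum_{j\ne i}a_j=1$ and $a_j>0$ for all $j\ne i$. Since $a_j=v(z_j/z_i)\ge 0$ for all $j$, the center $\operatorname{sp}_\cX(v)$ lies in the chart $\{z_i\ne0\}$ of $\cX$, where the $w_{i,j}$ are regular; and since $a_j>0$ for all $j\ne i$, that center is cut out by all the $w_{i,j}$, hence equals $\xi_i$. Therefore $v\in U_i$, and $U_i=\trop^{-1}(\tau_i^\circ)$.

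\emph{Assertion (4).} The identity $v(w_{i,j})=\langle e_i-e_j,\trop(v)\rangle$ shows that $\trop^{*}$ sends the functions $\langle e_i-e_j,\cdot\rangle|_{\tau_i^\circ}$, $j\ne i$, to the functions $v\mapsto v(w_{i,j})$; the former generate (with constants) the sheaf of integral affine functions on $\tau_i^\circ$, since $\{e_i-e_j\}_{j\ne i}$ is a $\Z$-basis of $M$ while $\langle m_i,\cdot\rangle\equiv1$ is constant on $\tau_i$ with $m_i=\sum_{j\ne i}(e_i-e_j)$, and the latter generate (with constants) the sheaf on $\tilde\tau_i^\circ$ by definition, subject to the single relation $\sum_{j\ne i}v(w_{i,j})=1$. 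As $\trop\colon\tilde\tau_i^\circ\simto\tau_i^\circ$ is already a homeomorphism by~(1), $\trop^{*}$ is a bijection between these function spaces, i.e.\ an isomorphism of integral affine structures --- in particular it matches the canonical Lebesgue measures. I expect the step most deserving of care to be the converse inclusion in~(3): everything else is bookkeeping with~\eqref{equ:pairing} and~\eqref{equ:strcoord1}--\eqref{equ:strcoord2}, but identifying $\operatorname{sp}_\cX(v)$ with $\xi_i$ for an arbitrary $v\in\trop^{-1}(\tau_i^\circ)$ requires locating the correct affine chart of the model $\cX$ and checking the signs of $v$ on the coordinate functions there; a close second is verifying that the homeomorphisms $\tilde\tau_i\simto\tau_i$ glue, which rests on the routine (but not quite trivial) fact that $\trop$ respects the simplicial face structure on both sides.
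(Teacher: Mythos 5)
Your proposal is correct and follows essentially the same route as the paper: compute $\trop$ explicitly on $\tilde\tau_i$ and $U_i$ via~\eqref{equ:strcoord1}--\eqref{equ:strcoord2} and~\eqref{equ:intno}, deduce the homeomorphism $\tilde\tau_i\simto\tau_i$ from the resulting affine formula, obtain~(2) by checking $v$ and $r_i(v)$ agree on the monomials $w_{i,j}$, and match the generators $v\mapsto v(w_{i,j})$ with $\langle e_i-e_j,\cdot\rangle$ for~(4). If anything you are more complete than the paper on the inclusion $\trop^{-1}(\tau_i^\circ)\subseteq U_i$, which the paper dismisses as following directly from~(2) but which indeed requires the specialization argument you give.
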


\begin{proof}
\corr{
We consider a $d$-dimensional simplex $\tilde{\tau}_i$ in $\Sk(X)$ as described in~\S\ref{sec:KSskel}.
Given numbers $\lambda_j\in\R_{\ge0}$, $j\ne i$ with $\sum \lambda_j=1$, let $v_{i,\lambda}$ be the minimal  valuation on $\cO_{\cX,\xi_i}$ such that $v_{i,\lambda}(w_{i,j})=\lambda_j$ for all $j\ne i$. By definition, $\trop(v_{i,\lambda})\in N_\R$ satisfies
	\[
	\langle m,\trop(v_{i,\lambda})\rangle=-v_{i,\lambda}(z^m)
	\]
	for all $m\in M$.
From~\eqref{equ:strcoord1} and~\eqref{equ:strcoord2} we have
	$
	\langle m_i,\trop(v_{i,\lambda})\rangle=-v_{i,\lambda}(\unipar^{-1}u_i)=1
	$
	and
	\begin{align} \label{eq:tropm_j}
	\begin{split}
	\langle m_j,\trop(v_{i,\lambda})\rangle
	&=-(d+1)v_{i,\lambda}(w_{i,j})+ \sum_{l \neq i,j}v_{i,\lambda}(w_{i,l})\\
	&=-(d+1)\lambda_j +\sum_{l \neq i,j}\lambda_l =- (d+2)\lambda_j +1 \leq 1,
	\end{split}
	\end{align}
	for $j\ne i$. This means that $\trop(v_{i,\lambda})$ lies in $\tau_i \subset \Bspace$, thus $\trop(\tilde{\tau}_i) \subseteq \tau_i$. The composition }
\[\begin{tikzcd}[row sep=small]
	\{\sum_{j \neq i} \lambda_j=1\}
	& \tilde{\tau}_i
	& {\tau_i}\\
	{\mathbb{R}_{\geq 0}^{\{0,\ldots,d+1\} \setminus \{i\}}} 
	& {\Sk(X)} 
	& B
	\arrow[hook, from=1-1, to=2-1]
	\arrow["\simeq", from=1-1, to=1-2]
	\arrow["\trop", from=1-2, to=1-3]
	\arrow[hook, from=1-2, to=2-2]
	\arrow[hook, from=1-3, to=2-3]
\end{tikzcd}\]
is injective by \eqref{eq:tropm_j} and surjective. \corr{Indeed, given $n \in \tau_i$, set $$\lambda_j = \frac{1}{d+2}(1-\langle n,m_j \rangle)$$
	for $j \neq i$. As $-(d+1) \leq \langle n,m_j \rangle \leq 1$ we have $\lambda_j \in \mathbb{R}_{\geq 0}$. Moreover,
	$$ \sum_{j \neq i} \lambda_j 
	= \frac{1}{d+2}(d+1 - \langle n, \sum_{j \neq i }m_j \rangle)
	=\frac{1}{d+2}(d+1 + \langle n, m_i \rangle)= 1.$$ }
	It follows that the restriction $\trop\colon \tau_i \rightarrow \tilde{\tau}_i$ is a homeomorphism, and $\trop\colon \Sk(X) \rightarrow B$ does too. 
	
	Part (3) follows directly from (2). For (2), we first check that $\trop(v) \in \tau_i^\circ=\{\max_{j \neq i}m_j<m_i=1\} $ for any $v \in U_i$. Indeed, we have $
	\langle m_i,\trop(v)\rangle=-v(\unipar^{-1}u_i)=1
	$ and $
	\langle m_j,\trop(v)\rangle=1-(d+2)v(w_{i,j})<1
	$ since $z_j=0$ at $\xi_i$.
	To conclude, it is enough to check that $\trop(v)=\trop \circ r_i(v)$ on an integral basis for $M$: \corr{
	\[
	\langle \trop(v), e_j-e_i \rangle 
	= -v(w_{i,j})=-\lambda_j 
	= -v_{i,\lambda}(w_{i,j}) 
	= \langle \trop(v_{i,\lambda}), e_j-e_i \rangle.
	\]}
	
	For (4), we recall that the affine functions on $\tilde{\tau}_i$ are integral linear combinations of $v \mapsto v(w_{i,j})$, for $j \neq i$. The affine functions on $\tau_i$ are the elements of $M$; as the set $\{e_j-e_i\}_{j \neq i}$ forms an integral basis for $M$, the affine functions on $\tau_i$ are integral linear combinations of $n \mapsto \langle e_j - e_i, n \rangle$. \corr{From 
	$
	\langle \trop(v_{i,\lambda}), e_j-e_i \rangle =-v_{i,\lambda}(w_{i,j})
	$
	we obtain (4).
}
\end{proof}

\subsection{Affinoid torus fibration}\label{sec:afftor}
As explored in~\cite{KS06,NXY19,MP21}, the analytification $X^\an$ of the Calabi--Yau variety $X$ admits various affinoid torus fibrations. 
Here we show that the tropicalization map induces affinoid torus fibrations on the open subsets $U_i$ of $X^\an$.

We recall that a continuous map $\rho: Y^\an \to S$ to a topological space $S$ is an $n$-dimensional affinoid torus fibration at a point $s \in S$ if there exists an open neighborhood $U$ of $s$
in $S$, such that the restriction to $\rho^{-1}(U)$ fits into a commutative diagram:

\[\begin{tikzcd}
\rho^{-1}(U)	
& \trop^{-1}(V)\\
U 
& V,
\arrow["\simeq", from=1-1, to=1-2]
\arrow["\rho"', from=1-1, to=2-1]
\arrow["\simeq", from=2-1, to=2-2]
\arrow["\trop", from=1-2, to=2-2]
\end{tikzcd}\]
$V$ being an open subset of $\mathbb{R}^n$, the upper horizontal map an isomorphism of analytic spaces, the
lower horizontal map a homeomorphism, and the map $\trop$ defined as in~\S\ref{subsec:trop map} for a torus of dimension $n$. In particular, an affinoid torus fibration induces an integral affine structure on the base $S$; see~\cite[\S 4.1]{KS06} for more details.

\begin{cor}\label{cor:afftor_opentop}
For any $i$, the map $\trop:U_i \to \tau_i^\circ$ is an affinoid torus fibration. Moreover, the induced integral affine structure on $\tau_i^\circ$ agrees with the one constructed in~\S\ref{sec:SIAS}.
\end{cor}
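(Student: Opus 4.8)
The strategy is to transport the statement to the open set $U_i$ and read it off from the explicit structure of the model $\cX$ near the zero-dimensional stratum $\xi_i$. By Lemma~\ref{lem:tropX}(2)--(3) we have $U_i=\trop^{-1}(\tau_i^\circ)$ and a commutative triangle expressing $\trop\colon U_i\to\tau_i^\circ$ as the composition of the retraction $r_i\colon U_i\to\tilde\tau_i^\circ$ with the homeomorphism $\trop\colon\tilde\tau_i^\circ\xrightarrow{\ \sim\ }\tau_i^\circ$; by part~(4) the latter identifies the integral affine structure on $\tilde\tau_i^\circ$ (from~\S\ref{sec:KSskel}) with the one on $\tau_i^\circ$ (from~\S\ref{sec:SIAS}). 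Since an affinoid torus fibration composed with a homeomorphism of its base is again an affinoid torus fibration, it suffices to prove that $r_i$ is an affinoid torus fibration inducing on $\tilde\tau_i^\circ$ exactly its integral affine structure from~\S\ref{sec:KSskel}, namely the one whose affine functions are the $\Z$-affine combinations of $v\mapsto v(w_{i,j})$, $j\ne i$.

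Now recall from~\S\ref{sec:KSskel} that $\cX$ is smooth at $\xi_i$, that the $d+1$ functions $w_{i,j}=z_j/z_i$ ($j\ne i$) form a regular system of parameters there, and that $u_i=-z_i^{d+2}/f(z)$ is a unit at $\xi_i$ with $\prod_{j\ne i}w_{i,j}=\unipar u_i^{-1}$ by~\eqref{equ:strcoord1}. Since $\{e_j-e_i\}_{j\ne i}$ is a $\Z$-basis of $M$, the $w_{i,j}$ are torus coordinates on $T$. Fix an index $i_0\ne i$ and set $\hat w_{i,i_0}:=u_iw_{i,i_0}$ and $\hat w_{i,j}:=w_{i,j}$ for $j\ne i,i_0$; these are invertible analytic functions on a neighbourhood of $\xi_i$ in $T^\an$ on which $u_i$ is a unit, and, multiplying the equation $g=0$ of $X\cap T$ there by a suitable invertible monomial times $u_i$, one rewrites it as $\prod_{j\ne i}\hat w_{i,j}=\unipar$. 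Because $v(u_i)=0$ for $v\in U_i$ we have $v(\hat w_{i,j})=v(w_{i,j})$, and $U_i=\operatorname{sp}_\cX^{-1}(\xi_i)$ is exactly the set of points of $X\cap T$ in this neighbourhood with $v(\hat w_{i,j})>0$ for all $j\ne i$ (see~\S\ref{sec:KSskel}). Eliminating $\hat w_{i,i_0}=\unipar/\prod_{j\ne i,i_0}\hat w_{i,j}$ identifies $\{\prod_{j\ne i}\hat w_{i,j}=\unipar\}$ with a split torus $\Gm^d$ with coordinates $(\hat w_{i,j})_{j\ne i,i_0}$; under this identification $U_i$ becomes the analytic domain $\{v(\hat w_{i,j})>0,\ \sum_{j\ne i,i_0}v(\hat w_{i,j})<1\}$, which is $\trop^{-1}(\Sigma)$ for an open standard simplex $\Sigma\subset\R^d$, while $r_i$ becomes the restriction of the tropicalization map of $\Gm^d$ in the sense of~\S\ref{subsec:trop map}. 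This exhibits $r_i$ — hence $\trop\colon U_i\to\tau_i^\circ$ — as an affinoid torus fibration over all of $\tilde\tau_i^\circ\cong\Sigma$, and the sheaf of affine functions it induces on $\tilde\tau_i^\circ$ is generated by $v\mapsto v(\hat w_{i,j})=v(w_{i,j})$, as required; the compatibility of integral affine structures on $\tau_i^\circ$ is then immediate from Lemma~\ref{lem:tropX}(4).

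The one step requiring genuine care is the analytic (not merely topological) identification of $U_i$ with $\trop^{-1}(\Sigma)$: one must check that, in the torus coordinates $w_{i,j}$, the variety $X\cap T$ agrees near $\xi_i$ with the standard semistable hypersurface $\{\prod_{j\ne i}\hat w_{i,j}=\unipar\}$, so that the tube $\operatorname{sp}_\cX^{-1}(\xi_i)$ may be computed there and $r_i$ coincides with the monomial/tropicalization map. This is precisely where admissibility of $f$ enters (through the unit $u_i$ and formulas~\eqref{equ:strcoord1}--\eqref{equ:strcoord2}), and it is the local computation underlying Lemma~\ref{lem:Vils}; the resulting statement is of the type established, for semistable and more general models, in~\cite{KS06,NXY19,MP21}.
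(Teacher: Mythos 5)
Your argument is correct and follows essentially the same route as the paper: both reduce, via Lemma~\ref{lem:tropX}, to the statement that the retraction $r_i\colon U_i\to\tilde\tau_i^\circ$ is an affinoid torus fibration inducing the affine structure generated by the $v\mapsto v(w_{i,j})$, and both ultimately rest on the standard semistable local model at $\xi_i$ (the paper simply cites the proof of~\cite[Theorem~6.1]{NXY19} for this, while you make the coordinate change $w_{i,j}\mapsto\hat w_{i,j}$ explicit before deferring the analytic identification of the tube with $\trop^{-1}(\Sigma)$ to the same references). The extra computation you supply is sound, provided the "identification" in your second paragraph is understood, as you indicate, via the formal/analytic local structure of $\cX$ at $\xi_i$ rather than as a global monomial change of coordinates on $T$.
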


\begin{proof}
By Lemma~\ref{lem:tropX}, $\trop|_{U_i}$ is homeomorphic to the retraction $r_i$ that is an affinoid torus fibration, see for instance the proof of~\cite[Theorem 6.1]{NXY19}. Again by Lemma~\ref{lem:tropX}, $\tilde{\tau}_i^\circ$ and $\tau_i^\circ$ are isomorphic as integral affine manifolds, hence the integral affine structure induced on $\tau_i^\circ$ by the affinoid torus fibration $\trop$ is isomorphic to the one constructed in~\S\ref{sec:SIAS}.
\end{proof}

%
%
%
%
\section{Solution to the non-Archimedean Monge--Amp\`ere equation}\label{sec:NAMA}
Let $L:=\cO(d+2)|_X$. We are now ready to prove Theorem~A in the introduction. Namely, we show that the preceding method recovers the solution to the non-Archimedean Monge--Amp\`ere equation~\cite{nama}, for a symmetric measure supported on the skeleton.
%
%
\subsection{Existence and uniqueness}
As mentioned above, to any (global) continuous psh metric $\Psi$ on $L^\an$ is associated a measure $\MANA(\Psi)$ on $X^\an$. By~\cite{YZ17} we have  $\MANA(\Psi_1)=\MANA(\Psi_2)$ iff $\Psi_1-\Psi_2$ is a constant, whereas the main result of~\cite{nama} (see also~\cite{BGJKM20}) states that for any measure $\nu$ supported on $\Sk(X)\subset X^\an$, there exists $\Psi$ such that $\MANA(\Psi)=\nu$.
%
%
\subsection{Comparing Monge--Amp\`ere measures}
Given $\psi\in\cQ_{\sym}$ we want to compare the tropical Monge--Amp\`ere measure $\nu_\psi$ on $\Bspace$ with the non-Archimedean Monge--Amp\`ere measure of the continuous psh metric $\FS(\psi)$ on $L^\an$, 
via the embedding $\Bspace\simto\Sk(X)\subset X^\an$.

\begin{thm}\label{thm:CompareMA}
  Let $\psi\in\cQ_{\sym}$ be any symmetric c-convex function.
  Then the associated continuous psh metric $\FS(\psi)$ on $L^\an$ has Monge--Amp\`ere measure $\MANA(\FS(\psi))=d!\,\nu_\psi$, viewed as a measures on $\Bspace\simeq\Sk(X)\subset X^\an$.
\end{thm}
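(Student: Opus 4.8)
The plan is to reduce the global computation of $\MANA(\FS(\psi))$ on $X^{\an}$ to a purely local computation on each open set $U_i$, where everything becomes toric and Lemma~\ref{lem:Vils} applies. First I would reduce to the case where $\nu_\psi$ charges only the open faces $\bigcup_i\tau_i^\circ$; by Theorem~\ref{thm:MA} (and the density argument in the proof of Theorem~B) such $\psi$ are dense in $\cQ_{\sym}$, and both sides of the asserted identity are continuous under uniform convergence of $\psi$ (the left side by continuity of the non-Archimedean Monge--Amp\`ere operator under locally uniform convergence together with continuity of $\psi\mapsto\FS(\psi)$, the right side by Proposition~\ref{prop:MAcont}), so it suffices to treat this dense class. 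For such $\psi$, the measure $\nu_\psi$ gives no mass to $\Sk(X)\setminus\bigcup_i\tilde\tau_i^\circ$, and since $\Sk(X)\setminus\bigcup_i\tilde\tau_i^\circ$ is closed with empty interior in $\Sk(X)$, it is enough to prove the identity $\MANA(\FS(\psi))|_{U_i}=d!\,\nu_\psi|_{\tau_i^\circ}$ on each $U_i$ separately.

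Next I would make the local computation on $U_i$. By Lemma~\ref{lem:tropX}(3) we have $U_i=\trop^{-1}(\tau_i^\circ)$, and $U_i\subset T^{\an}$. Pick $m\in M\cap\Delta$ and the corresponding section $\chi^m=\chi^{1,m}$ of $\cO(d+2)$; by~\eqref{equ:FStrop} we have $\FS(\psi)-\log|\chi^m|=(\psi-m)\circ\trop$ on $T^{\an}$, hence on $U_i$. Since $\chi^m$ is nonvanishing on $U_i$ (as $U_i\subset T^{\an}$) and restricts to a nonvanishing section of $L=\cO(d+2)|_X$ there, the defining property of the Monge--Amp\`ere measure of a metric gives $\MANA(\FS(\psi))|_{U_i}=\MANA\big((\psi-m)\circ\trop|_{U_i}\big)$, computed on the analytic space $U_i\subset X^{\an}$. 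The only subtlety is that Lemma~\ref{lem:Vils} is stated for $\trop\colon T^{\an}\to N_{T,\R}$ on the full $(d+1)$-dimensional torus, whereas here $U_i$ is a $d$-dimensional analytic space sitting inside $X^{\an}\cap T^{\an}$ and $\trop|_{U_i}$ lands in the $d$-dimensional affine subspace $\tau_i^\circ$. I would handle this by changing coordinates so that $U_i$ is identified with (an open piece of) the generic fiber of the formal scheme $\mathfrak X=\operatorname{Spf}(K^\circ\langle z_0,\dots,z_{d}\rangle/(z_0\cdots z_{d}-a))$ appearing in the proof of Lemma~\ref{lem:Vils} — indeed, on $U_i$ the coordinates $w_{i,j}=z_j/z_i$ ($j\ne i$) are units with $v(w_{i,j})>0$, $\sum_j v(w_{i,j})=1$, matching the skeleton simplex of such an $\mathfrak X$ with $n=d$ — and then invoke Corollary~5.7 of~\cite{Vilsmeier} exactly as there. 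This yields $\MANA\big((\psi-m)\circ\trop|_{U_i}\big)=d!\,\MAR\big((\psi-m)|_{\tau_i^\circ}\big)=d!\,\MAR(\psi|_{\tau_i^\circ})$, the last equality because $m$ is affine. Finally, by Corollary~\ref{cor:MAincoords} (the last displayed identity there, with the chart $q_{i,j}$ identifying $\tau_i^\circ$ with its integral affine model) we have $\MAR(\psi|_{\tau_i^\circ})=\nu_\psi|_{\tau_i^\circ}$ — here I must be slightly careful that the integral affine structure on $\tau_i^\circ$ used in Lemma~\ref{lem:Vils}/Corollary~\ref{cor:MAincoords} matches, which is exactly the content of Corollary~\ref{cor:afftor_opentop}. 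Combining gives $\MANA(\FS(\psi))|_{U_i}=d!\,\nu_\psi|_{\tau_i^\circ}$.

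Assembling the pieces: for $\psi$ in the dense class, summing over $i$ and using that both measures are supported on $\bigcup_i U_i$ (resp. $\bigcup_i\tau_i^\circ$) and that $U_i\cap U_j=\emptyset$ for $i\ne j$ (they specialize to distinct points $\xi_i$), we get $\MANA(\FS(\psi))=d!\,\nu_\psi$ globally on $X^{\an}$, with both sides supported on $\Sk(X)\simeq\Bspace$. The general $\psi\in\cQ_{\sym}$ then follows by the approximation and continuity argument above.

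The main obstacle I expect is the bookkeeping around the dimension mismatch and the integral affine structures: making precise that $\trop|_{U_i}\colon U_i\to\tau_i^\circ$ is, after a suitable toric change of coordinates, exactly the $d$-dimensional situation of Lemma~\ref{lem:Vils} (rather than a $(d+1)$-dimensional one restricted to a hypersurface), and that the real Monge--Amp\`ere measure computed there with respect to the standard lattice agrees, via the charts of~\S\ref{sec:SIAS} and Corollary~\ref{cor:afftor_opentop}, with $\nu_\psi|_{\tau_i^\circ}$ as defined in~\S\ref{sec:sym}. Everything else is a routine combination of~\eqref{equ:FStrop}, the locality of the Monge--Amp\`ere operator, Corollary~\ref{cor:MAincoords}, and the density/continuity argument.
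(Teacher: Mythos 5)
Your overall route is the same as the paper's: reduce by density and continuity to the case where $\nu_\psi$ charges only $\bigcup_i\tau_i^\circ$, then compute locally on each $U_i=\trop^{-1}(\tau_i^\circ)$ using \eqref{equ:FStrop}, the affinoid torus fibration of Corollary~\ref{cor:afftor_opentop}, Vilsmeier's Lemma~\ref{lem:Vils}, and Corollary~\ref{cor:MAincoords}. Your treatment of the dimension bookkeeping (identifying $\trop|_{U_i}$ with a genuinely $d$-dimensional tropicalization before invoking Vilsmeier) is if anything more careful than the paper's.

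There is, however, one genuine gap in the globalization step. You justify passing from the local identities $\MANA(\FS(\psi))|_{U_i}=d!\,\nu_\psi|_{\tau_i^\circ}$ to the global one by saying that $\Sk(X)\setminus\bigcup_i\tilde\tau_i^\circ$ is closed with empty interior, and later by asserting that ``both measures are supported on $\bigcup_iU_i$.'' Neither is a valid argument for the left-hand side: a positive Radon measure can perfectly well charge a closed nowhere dense set (indeed Example~\ref{exam:singmass} shows $\nu_\psi$ itself can charge $\Bspace\setminus\Bspace_0$), and a priori $\MANA(\FS(\psi))$ is a measure on all of $X^\an$ with no reason to be concentrated on $\bigcup_iU_i$, or even on $\Sk(X)$ --- that is part of what is being proved. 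The correct (and short) fix is the paper's mass count: $\MANA(\FS(\psi))$ has total mass $(d+2)^{d+1}=d!\,|\Aspace|$, while the local computation already accounts for mass $d!\,\nu_\psi\bigl(\bigcup_i\tau_i^\circ\bigr)=d!\,|\Aspace|$ inside $\bigcup_iU_i$; hence $\MANA(\FS(\psi))$ puts no mass on the complement and the two measures agree globally. With that one line inserted, your argument is complete and coincides with the paper's proof.
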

To prove the theorem, we want to use Vilsmeier's result in Lemma~\ref{lem:Vils}, but our torus $T$ is of the wrong dimension. Instead, we use the fact that $X^\an$ admits local affinoid torus fibrations with bases that are open subsets of $\Bspace\simeq\Sk(X)$, and that these fibrations are compatible with the embedding of $X$ into the toric variety $\P^{d+1}$, see~\S\ref{sec:afftor}.

\begin{proof}
  We first consider the case when the measure $\nu_\psi$ gives full mass to the union of the open $d$-dimensional simplices $\tau_i^\circ$ of $\Bspace$. By Corollary~\ref{cor:afftor_opentop}, the tropicalization map gives a affinoid torus fibration $\trop\colon U_i\to\tau_i^\circ$. For any $i\ne j$ we have
  \[
    \FS(\psi)-\log|\chi^{m_j}|=(\psi-m_j)\circ\trop
  \]
  on $U_i$, by~\eqref{equ:FStrop}.  Lemma~\ref{lem:Vils} and Corollary~\ref{cor:MAincoords} now give that $\MANA(\FS(\psi))=d!\,\nu_\psi$ on $\tau_i^\circ\subset U_i$. As $\MANA(\FS(\p))$ and $\nu_\psi$ have mass $d!|\Aspace|$ and $|\Aspace|$, respectively, we have accounted for all the mass of $\MANA(\FS(\p))$, so $\MANA(\FS(\p))=d!\nu_\psi$.
  
  \smallskip
  Now consider the general case. We can find a sequence $(\nu_n)$ of symmetric measures on $\Bspace$ of mass $|\Aspace|$ converging weakly to $\nu_\psi$ and such that $\nu_n$ gives full mass to $\bigcup_i\tau_i^\circ$. By what precedes, there exists a unique $\psi_n\in\cQ_{\sym}$ such that $\nu_{\psi_n}=\nu_n$ and $\int_\Bspace\psi_n\,d\nu=0$. By compactness of $\cQ_{\sym}/\R$ we may assume that $\psi_n$ converges uniformly to a function $\psi\in\cQ_{\sym}$. By continuity of the Fubini--Study operator, $\FS(\psi_n)$ converges uniformly to $\FS(\psi)$. It follows that the Monge--Amp\`ere measures $\MANA(\FS(\psi_n))$ converge weakly to $\MANA(\FS(\psi))$. Since  $\MANA(\FS(\psi_n))=d!\,\nu_n$ for all $n$, we get $\MANA(\FS(\psi))=d!\,\nu_\psi$, and we are done.
\end{proof}
%
%
\subsection{Invariance under retraction}\label{sec:invretr}
Let $\nu$ be a symmetric positive measure of mass $(d+2)^{d+1}$ on $\Sk(X)$. 
The results above give a rather explicit description of the solution (which is unique, up to a constant) to the non-Archimedean Monge--Amp\`ere equation $\MANA(\psi)=\nu$ on the Calabi--Yau variety $X\subset\P^{d+1}$. For one thing, $\psi$ is the restriction of a torus invariant metric on $\cO_{\P^{d+1}}(d+2)$. Note that
we are \emph{not} assuming that $X$ is invariant under any torus action.

Here we investigate further properties of the solution.

\begin{cor}\label{cor:retrinv}
  Let $\nu$ be any symmetric positive measure on $\Sk(X)$ of mass $(d+2)^{d+1}$, and let $\Psi$ be a continuous psh metric on $L^\an$, 
  whose Monge--Amp\`ere measure equals $\nu$. Then $\Psi$ is invariant under retraction to $\Sk(X)$, in the following sense: for any $j\ne i$, the function $\Psi_{i,j}:=(\Psi-\log|\chi^{m_j}|)|_{U_i}$ satisfies $\Psi_{i,j}=\Psi_{i,j}\circ r_i$.

\end{cor}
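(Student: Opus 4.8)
The plan is to reduce the statement to the explicit Fubini--Study metric constructed earlier, and then read off the invariance from the factorization of the tropicalization map through the retraction $r_i$. First I would argue that, up to an additive constant, $\Psi$ must be $\FS(\psi)$ for a suitable $\psi\in\cQ_{\sym}$. Indeed, the measure $\nu/d!$ is symmetric of mass $|\Aspace|=(d+2)^{d+1}/d!$, so Theorem~\ref{thm:MA} provides $\psi\in\cQ_{\sym}$ with $\nu_\psi=\nu/d!$, and Theorem~\ref{thm:CompareMA} gives $\MANA(\FS(\psi))=d!\,\nu_\psi=\nu$, viewed on $\Bspace\simeq\Sk(X)\subset X^\an$. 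By the uniqueness of solutions to the non-Archimedean Monge--Amp\`ere equation~\cite{YZ17}, it follows that $\Psi=\FS(\psi)+c$ for some $c\in\R$. Since the condition $\Psi_{i,j}=\Psi_{i,j}\circ r_i$ is insensitive to adding a constant to $\Psi$, we may assume $\Psi=\FS(\psi)$.

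Next I would make $\Psi_{i,j}$ explicit on $U_i$. Recall from~\S\ref{sec:CY} that $U_i\subset T^\an$, and that $\chi^{m_j}$ (which equals $z_j^{d+2}$) is a section of $\cO(d+2)$ that is nonvanishing over $T^\an$, so $\log|\chi^{m_j}|$ is a continuous metric there. By~\eqref{equ:FStrop},
\[
  \Psi_{i,j}=\bigl(\FS(\psi)-\log|\chi^{m_j}|\bigr)\big|_{U_i}=\bigl((\psi-m_j)\circ\trop\bigr)\big|_{U_i}.
\]
On the other hand, the commutative diagram in Lemma~\ref{lem:tropX}(2) expresses $\trop|_{U_i}$ as the composition of $r_i\colon U_i\to\tilde{\tau}_i^\circ$ with the restriction of $\trop$ to $\tilde{\tau}_i^\circ$; since the latter is literally a restriction of $\trop$, this says $\trop\circ r_i=\trop$ as maps $U_i\to N_\R$. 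Composing with $\psi-m_j$ and using the previous display then yields $\Psi_{i,j}\circ r_i=(\psi-m_j)\circ\trop\circ r_i=(\psi-m_j)\circ\trop=\Psi_{i,j}$ on $U_i$, which is the assertion.

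I do not expect a genuine obstacle here: the argument is essentially formal once Theorem~\ref{thm:MA}, Theorem~\ref{thm:CompareMA}, and the uniqueness from~\cite{YZ17} are available. The only points requiring a little care are the bookkeeping of masses in the first step (the factor $d!$ relating the mass $(d+2)^{d+1}$ of $\nu$ to $|\Aspace|$), the remark that retraction-invariance survives the addition of a constant to $\Psi$, and the passage from the commutative diagram of Lemma~\ref{lem:tropX} to the pointwise identity $\trop\circ r_i=\trop$ on $U_i$.
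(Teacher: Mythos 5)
Your proposal is correct and follows essentially the same route as the paper's proof: solve the tropical equation via Theorem~\ref{thm:MA}, identify $\Psi$ with $\FS(\psi)$ up to a constant using Theorem~\ref{thm:CompareMA} and the uniqueness from~\cite{YZ17}, then combine~\eqref{equ:FStrop} with $\trop=\trop\circ r_i$ on $U_i$ from Lemma~\ref{lem:tropX}. Your version is in fact slightly more careful than the paper's about the $d!$ mass normalization and the additive constant.
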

\begin{proof}
By Theorem~\ref{thm:MA} there exists a function $\psi\in\cQ_{\sym}$ such that $\nu_\psi=\nu$, and by Theorem~\ref{thm:CompareMA} $\Psi=\FS(\psi)$. By \eqref{equ:FStrop} we have $\FS(\psi)-\log|\chi^{m_j}|=(\psi-m_j)\circ\trop$ on $T^\an$.
By Lemma~\ref{lem:tropX}, we have $\trop= \trop \circ r_i$ on $U_i$, hence the claim follows.
\end{proof}
%
%
%
%
\section{Applications to the SYZ conjecture}\label{sec:SYZ}
In this section we prove Corollary~C, following the work of Li. As $f$ is admissible, for any $\unipar\in\C^*$, \begin{equation*}
  X_\unipar:=\{z_0\dots z_{d+1}+\unipar f_{d+2}(z)=0\}\subset\P^{d+1}_\C
\end{equation*}
is a smooth complex projective variety, which we view as a complex manifold. Set
\[ \alpha_\unipar:=\tfrac1{(\log|\unipar|^{-1})}c_1(\cO_{\P^{d+1}}(d+2)|_{X_\unipar}).\]
We equip $X_\unipar$ with the unique Ricci flat K\"ahler metric in $\alpha_\unipar$. Let $\nu_\unipar$ be the corresponding smooth positive measure on $X_\unipar$, and write $(X_\unipar,d_\unipar)$ for associated  metric space.
%
%
\subsection{Proof of Corollary~C}\label{sec:pfCorC}
We follow~\cite{LiSYZ}. 
Let us identify 
$$\cX=\{z_0\cdots z_{d+1}+\unipar f_{d+2}(z)=0\}$$ with the associated (singular) $d+1$-dimensional complex analytic subspace of $\P^{d+1}\times\C$. The central fiber $\cX_0$ consists of the $d+2$ coordinate hyperplanes $H_i=\{z_i=0\}$ in $\P^{d+1}\simeq\P^{d+1}\times\{0\}$. For each $i$, let $\xi:=\bigcap_{j\ne i}H_j\subset\cX_0$. Then $\cX$ is smooth at $\xi_i$, and we may find local holomorphic coordinates $w_j$, $j\ne i$, at $\xi_i$ such that $\unipar=w_0\cdot \ldots \cdot w_d$. If we pick small (disjoint) neighborhoods $W_i$ of $\xi_i$, and set $W:=\bigcup_iW_i$, then we have a continuous map
\[ \Log_\cX\colon W\setminus\cX_0\to\bigcup_i\tau_i^\circ\subset\Bspace \]
defined by $\Log_\cX=\sum_{j\ne i}\frac{\log|w_j|}{\log|\unipar|}n_j$ on $W_i$.

By~\cite{konsoib} (see~\cite[\S3.1]{LiSYZ}), most of the mass of $X_\unipar$ lies in $W$ for $\unipar\approx0$. Indeed:
\begin{lem}\label{lem:muchmass}
We have $\lim_{\unipar\to0}\nu_\unipar(X_\unipar\cap W)/\nu_\unipar(X_\unipar)=1$.
\end{lem}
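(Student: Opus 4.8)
The plan is to reduce Lemma~\ref{lem:muchmass} to the mass-concentration estimates of Kontsevich--Soibelman~\cite{konsoib}, exactly as Li does in~\cite[\S3.1]{LiSYZ}, adapting the argument to our (non-semistable) model $\cX$. The key point is that, although $\cX=V(z_0\cdots z_{d+1}+\unipar f(z))$ is not semistable snc, it is a \emph{minimal dlt model} of $X$, and near each zero-dimensional stratum $\xi_i$ of $\cX_0$ it \emph{is} smooth with local coordinates $w_j$ ($j\ne i$) such that $\unipar=\prod_{j\ne i}w_j$; this is precisely the local model used in~\loccit\ for the concentration estimate. So the complement $X_\unipar\setminus W$ sits over a fixed compact subset of $\cX_0$ staying away from these good strata, and the Calabi--Yau volume there should decay like a negative power of $\log|\unipar|^{-1}$.

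Concretely, I would proceed as follows. \emph{First}, fix the normalization: $\nu_\unipar$ is the Ricci-flat volume form in the class $\alpha_\unipar=\tfrac1{\log|\unipar|^{-1}}c_1(\cO(d+2)|_{X_\unipar})$, so the total mass is $\nu_\unipar(X_\unipar)=\alpha_\unipar^d=(\log|\unipar|^{-1})^{-d}(d+2)^{d+1}$, in particular independent of the metric and comparable to $(\log|\unipar|^{-1})^{-d}$. \emph{Second}, recall the comparison of the Ricci-flat volume form with a reference volume form coming from the holomorphic volume form: since $K_{\cX}\sim\cO_{\cX}$ (minimality), $X$ carries a nowhere-vanishing relative canonical form $\Omega_\unipar$, and by the standard estimate (see~\cite[Theorem~A]{konsoib}, or~\cite[\S3]{LiSYZ}) the Calabi--Yau measure $\nu_\unipar$ is, up to a bounded factor, the normalized measure $\tfrac{(\log|\unipar|^{-1})^{-d}}{\int_{X_\unipar}\Omega_\unipar\wedge\bar\Omega_\unipar}\Omega_\unipar\wedge\bar\Omega_\unipar$. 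It therefore suffices to prove the analogous mass-concentration statement for $\Omega_\unipar\wedge\bar\Omega_\unipar$. \emph{Third}, estimate $\int_{X_\unipar\cap W_i}\Omega_\unipar\wedge\bar\Omega_\unipar$ from below and $\int_{X_\unipar\setminus W}\Omega_\unipar\wedge\bar\Omega_\unipar$ from above using the explicit local coordinates: on $W_i$ one has $\Omega_\unipar=\pm\,\dd\log w_{j_1}\wedge\cdots\wedge\dd\log w_{j_d}$ (the residue of $\dd z/z\wedge\text{(relative form)}$ along $\cX_0$), and a direct computation in polar coordinates gives $\int_{W_i\setminus\cX_0}\Omega_\unipar\wedge\bar\Omega_\unipar\sim c\,(\log|\unipar|^{-1})^{d}$ as $\unipar\to0$ — the logarithmic pole produces the expected power. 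Meanwhile, over the compact region of $\cX_0$ avoiding all the $\xi_i$, the form $\Omega_\unipar$ is bounded (only logarithmic singularities along $\cX_0$), so $\int_{X_\unipar\setminus W}\Omega_\unipar\wedge\bar\Omega_\unipar=O((\log|\unipar|^{-1})^{d-1})$ at worst, hence negligible compared to the total.

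Putting these together, $\nu_\unipar(X_\unipar\cap W)/\nu_\unipar(X_\unipar)\to1$, which is the claim. \emph{The main obstacle} is the second step: justifying that the estimates of~\cite{konsoib}, which are stated for semistable (or snc) degenerations, apply verbatim here. Since our $\cX$ is only dlt, one must either invoke the (local, toroidal) structure of $\cX$ near $\Sing(\cX)$ described in~\S\ref{sec:KSskel} — étale-locally $\cX\simeq V(x_1\cdots x_m-\unipar y)\times\mathbb{A}^{d-m}$, which is a toric model and to which the toroidal versions of the concentration estimates apply — or pass to the small resolution $\cX'\to\cX$ obtained by blowing up all but one component of $\cX_0$, which \emph{is} snc, run the argument of~\loccit\ upstairs, and note that $\cX'\to\cX$ is an isomorphism over a neighborhood of each $\xi_i$ so the relevant volumes are unchanged. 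I expect the resolution route to be cleanest, since it reduces everything literally to~\cite{konsoib} without re-deriving toroidal estimates; the remaining work is then just the elementary polar-coordinate computation of the third step, which I would not spell out in full.
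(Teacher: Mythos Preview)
Your proposal is correct and matches the paper's approach: the paper's proof is precisely your ``resolution route,'' passing to an snc model $\cX'\to\cX$ that is an isomorphism over the regular locus of $\cX$ (hence near each $\xi_i$) and then invoking \cite[Theorem~3.4]{konsoib} upstairs. The only minor discrepancy is that the paper does not claim the small resolution itself is snc but simply takes any projective snc resolution with this isomorphism property; everything else you wrote (the comparison with $\Omega_\unipar\wedge\bar\Omega_\unipar$, the local log-pole computation) is implicit in the cited result and need not be spelled out.
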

\begin{proof}
Viewed as a scheme over $\C\cro{\unipar}$, $\cX$ is a minimal dlt model, whose skeleton $\Sk(\cX)$ equals the essential skeleton $\Sk(X)$. 
If $\cX$ were a semistable model, the lemma would follow from~\cite{konsoib}. Now, there exists a projective birational morphism $\pi\colon\cX'\to\cX$ such that $\cX'$ is an snc model of $X$, and such that $\pi$ is an isomorphism over the regular part of $\cX$. It follows that the only $d$-dimensional simplices of $\Sk(\cX')$ contained in $\Sk(X)$ are associated to $\xi'_i:=\pi^{-1}(\xi_i)$, $0\le i\le d$. We can pick corresponding neighborhoods $W'_i$ of $\xi'_i$ such that $\pi(W'_i)\subset W_i$ for all $i$, and hence $X_\unipar\cap W'\subset X_\unipar$, where $W'=\bigcup_iW'_i$. By~\cite[Theorem~3.4]{konsoib}, we have $\lim_{\unipar\to0}\nu_\unipar(X_\unipar\cap W')/\nu_\unipar(X_\unipar)=1$, and the result follows. 
\end{proof}

Let $\psi\in\cQ_{\sym}$ be such that $\nu_\psi$ equals Lebesgue measure on $\Bspace\simeq\Sk(X)$. In particular, the restriction of $\psi$ to a $d$-dimensional face $\tau_i^\circ$ satisfies the real Monge--Amp\`ere equation, and is therefore smooth outside a small closed subset.

Let $\phi:=\psi^c$ be the c-transform of $\psi$, viewed as a continuous convex function on $\Delta$. This defines a continuous psh metric on $\cO_{\P^{d+1}}(d+2)^{\an}$ whose restriction to $X^\an$ has Monge--Amp\`ere measure equal to $\nu$, by Theorem~\ref{thm:CompareMA}.

The continuous convex function on $\Delta$ also defines a continuous psh metric on the holomorphic line bundle $\cO(d+2)$ on $\P^{d+1}$. Approximating $\psi$ by a smooth strictly convex function, we can approximate this metric uniformly by a K\"ahler metric on $\cO(d+2)$. As in~\cite[Lemma~4.1]{LiSYZ}, this leads to the existence, given $\e>0$, of a K\"ahler metric $\om_\unipar$ on $(X_\unipar,\alpha_\unipar)$, such that, for any $i$, $\om_\unipar$ has a local potential on $W_i\cap X_\unipar$ that differs from the function $\psi\circ\Log_\cX$ by at most $\e$.

As explained in~\cite[Lemma~4.2]{LiSYZ}, we may, for small $\unipar$ and $\epsilon$, by shrinking $W$ (so that $\Log_\cX(W\cap X_\unipar)$ is contained in the smooth locus of $\psi$) find Lipschitz functions $f_\unipar$ of $C^0$-norm on the order of $\epsilon$, smooth outside a set of measure 0, such that the $(1,1)$-currents $\omega_{f,\unipar} = \omega_\unipar+dd^cf_\unipar$  is positive on $X_\unipar$ and approximate $dd^c(\psi\circ\Log_\cX)$ on the smooth locus of $f$ in $W$ and the measure $\omega_{f,\unipar}^d$ is close to the Calabi--Yau volume form $\nu_\unipar$ on $X_\unipar$ in total variation. The proof then proceeds to get the $C^0$ convergence on $W$ of the potentials of the Calabi--Yau metrics on $X_\unipar$ to $\psi\circ \Log_\cX$ and the resulting $C^\infty$-convergence and special Lagrangian torus fibration on $W$ in the same way as in~\S4.3 and~\S4.5 of~\cite{LiSYZ}.

%
%
\subsection{Gromov--Hausdorff convergence}
Consider the Fermat family
\begin{equation*}
X_{\unipar} = \{ z_0z_1\ldots z_{d+1} + \unipar(z_0^{d+2}+\dots+z_{d+1}^{d+2}) \} \subset \P_\C^{d+1}.
\end{equation*}
As above, we write $(X_\unipar,d_\unipar)$ for the corresponding metric space.

Let $\psi\in\cQ_{\sym}$ be a solution (unique, up to a constant) to the tropical Monge--Amp\`ere equation $\nu_\psi=\nu$, where $\nu$ is Lebesgue measure on $\Bspace$, see Theorem~\ref{thm:MA}, and let $\Psi$ be the corresponding metric on the affine $\R$-bundle $\Lambda$, see Remark~\ref{rmk:MAMetric}. By the regularity theory for the real Monge--Amp\`ere equation on $\R^d$, there exists an open subset $\cR_\psi\subset\Bspace_0$ such that $\Bspace\setminus\cR_\psi$ has $(d-1)$-Hausdorff measure zero, and such that $\Psi$ is smooth and strictly convex over $\cR_\psi$. The Hessian of $\Psi$ on $\cR_\psi$ then defines a metric on $\cR$, and we let 
$(\cR_\psi,d_\psi)$ be the resulting metric space. 

By diameter bounds proved in \cite{LiTosatti}, $(X_\unipar,d_\unipar)$ converges in the sense of Gromov--Hausdorff after passing to subsequence. By \cite[Theorem~5.1]{LiFermat}, any subsequential limit $(X_\unipar,d_\unipar)$ contain a dense subset locally isomorphic to the regular part of a Monge--Amp{\`e}re metric on $B_0$. By the injectivity in Theorem~B, the latter space is uniquely determined as $(\cR_\psi,d_\psi)$.
%
%
%
%

%
%
%
%
%
%
\end{document}